\pgfplotsset{compat=1.16}
\algrenewcommand\algorithmicindent{1.5em}
\algrenewcommand\alglinenumber[1]{\footnotesize #1.\enspace}
\newcommand*{\customdoi}[2]{\href{http://doi.org/#1}{\texttt{#2}}}
\algnewcommand{\LineComment}[1]{\State \texttt{/* #1 */}}
\def\algbackskip{\hskip-\ALG@thistlm}
\theoremstyle{plain}
\newtheorem{theorem}{Theorem}[section]
\newtheorem{corollary}[theorem]{Corollary}
\newtheorem{lemma}[theorem]{Lemma}
\theoremstyle{definition}
\newtheorem{definition}{Definition}
\newtheorem{remark}[theorem]{Remark}
\newtheorem*{remark*}{Remark}
\newcommand{\eg}{{e.g.,}\ }
\newcommand{\nref}[2]{\hyperref[#1]{#2}}
\newcommand{\cadlag}{c\`adl\`ag{}}
\newcommand{\bc}[1]{{\text{\fontsize{5}{5}\selectfont{$($}{\hspace*{-0.3pt}}\fontsize{5}{4}\selectfont$#1$\hspace*{-0.3pt}\fontsize{5}{5}\selectfont{$)$}}}}
\newcommand{\de}{\vcentcolon=}
\newcommand{\dgeo}{d_{\rm{geo}}}
\renewcommand{\inf}{\mathop{\mathrm{inf}\vphantom{\mathrm{sup}}}}
\newcommand{\intd}{{\rm d}}
\newcommand{\Area}{\operatorname{Area}}
\newcommand{\tmix}{t_{\rm{mix}}}
\newcommand{\Unif}[1]{\operatorname{Unif}#1}
\newcommand{\defbold} [1]{\expandafter\newcommand\csname b#1\endcsname{\mathbf{#1}}}
\newcommand{\defcal} [1]{\expandafter\newcommand\csname c#1\endcsname{\mathcal{#1}}}
\newcommand{\defbb} [1]{\expandafter\newcommand\csname bb#1\endcsname{\mathbb{#1}}}
\newcommand{\defscr} [1]{\expandafter\newcommand\csname s#1\endcsname{\mathscr{#1}}}
\newcommand{\deffrak} [1]{\expandafter\newcommand\csname f#1\endcsname{\mathfrak{#1}}}
\newcommand{\defbracketed} [1]{\expandafter\newcommand\csname bc#1\endcsname{\bc{#1}}}
\newcommand{\deftilde} [1]{\expandafter\newcommand\csname ti#1\endcsname{\tilde{#1}}}
\newcommand{\defoverline} [1]{\expandafter\newcommand\csname ol#1\endcsname{\overline{#1}}}
\newcommand{\defvec} [1]{\expandafter\newcommand\csname vec#1\endcsname{\vec{#1}}}
\def\b1{\mathbf{1}}
\def\bb1{\mathbbm{1}}
\newcommand{\Lc}[1]{L^{\mkern-2mu \bc{#1}}}
\newcommand{\hatn}{\hat{n}}
\newcommand{\nb}{m}
\newcommand{\papertitle}{Worst-case mixing estimates for Brownian motion with semipermeable barriers}
\title{Worst-case mixing estimates for Brownian motion with semipermeable barriers}
\author{Alexander Van Werde} 
\author{Jaron Sanders}
\begin{document}
\begin{abstract}
We study the mixing properties of a Brownian motion whose movements are hindered by semipermeable barriers.
Our setting assumes that the process takes values in a smooth planar domain and that the barriers are one-dimensional closed curves.
We establish an upper bound on the mixing time and a lower bound on the stationary distribution in terms of geometric parameters.
These worst-case bounds decay at an exponential rate as the domain grows large, and we give examples that show that exponential decay is necessary in our worst-case setting.
\end{abstract}
\maketitle

\section{Introduction}

Reflected Brownian motion with semipermeable barriers, or \emph{snapping-out Brownian motion}, is a continuous-time stochastic process whose movements are hindered by semipermeable barriers.
The movements of the process are like those of classical Brownian motion when it is not in contact with a barrier, and in fact, its reflections upon hitting a barrier are also typically like those of standard reflected Brownian motion.
The difference, however, is that it may pass through the barriers at certain random times.
These random times are governed by how much local time the process has accumulated at a barrier, and an on--off Markov chain associated with the barrier; see Definition \ref{def: ReflectedBrownianMotion}.

This process was independently introduced to the literature by various authors starting from 2012. 
It arose in work of N\'andori and Sz\'asz \cite{nandori2012lorentz} as a scaling limit of a Lorentz process with a punctured wall, while 
Lejay \cite{lejay2016snapping} as well as Mandrekar and Pilipenko \cite{mandrekar2016brownian} found it as the limit of processes with a membrane of shrinking thickness, and Bobrowski \cite{bobrowski2012diffusions} studied a variant that lives on the edges of a graph.  
Since then, the process has been used to study a range of interface phenomena \cite{forien2019gene,erhard2021slow,zhao2024voter,slkezak2021diffusion,bressloff2022probabilistic,bressloff2023renewal,bobrowski2017averaging,bobrowski2020modeling,bobrowski2025approximation,bobrowski2025snapping}.
The introduction of the process in \cite{lejay2016snapping} was motivated by the fact that it can be used as a probabilistic realization of a (deterministic) differential equation with interface condition.
Their study led to Monte Carlo estimators for problems arising in applications like diffusion MRI imaging  \cite{lejay2018monte,schumm2023numerical}.
Another application occurs in a statistical question where the goal is to uncover the location of barriers that hinder a process based on observations of this process \cite{vanwerde2024recovering}.
This may be relevant for animal movements or subcellular diffusions.

For snapping-out Brownian motion, it is natural to wonder how the barriers affect the stochastic process' behavior.
In this paper, we investigate specifically how its mixing time and stationary distribution are affected.
Studying these two quantities is not just needed to fundamentally understand the process.
Performance guarantees for, for example, Monte Carlo methods \cite{roberts2004general,paulin2015concentration} such as the aforementioned ones from \cite{lejay2018monte,schumm2023numerical} and statistical estimators that recover barriers from a finite trajectory \cite[Theorem 2.3]{vanwerde2024recovering}, are a function of the mixing time and stationary distribution.

\newpage 
Our results on the mixing time and stationary distribution are qualitative.
Specifically, we establish a generic bound on the mixing time and stationary distribution in terms of explicit geometric parameters given in Section \ref{sec: Parameters}.
We find that this bound decays at an exponential rate when the domain grows large and all other parameters are kept fixed.
By constructing concrete examples, we show that such exponential decay can not be avoided in a worst case; see Section \ref{sec: ResultSub}.

The proof relies on a reduction to the Doeblin minorization condition.
We first show that one can always transport probability mass between nearby regions of the domain, regardless of whether a semipermeable barrier is in the way.
We subsequently use a sequence of such local steps to globalize the estimates.
Visualizations for the local transport steps may be found in Figures \ref{fig: PillShaped} to \ref{fig: AvoidBarrier}, and the method to globalize the estimates with a sequence of steps that follow a geodesic in the domain is visualized in Figure \ref{fig: Geo}.

In Section \ref{sec: RelatedWork}, we next outline related work in the literature.
Focus thus far looks to have primarily been on the mixing properties of canonical (reflected) Brownian motions.
In Section \ref{sec: FutureWork}, we discuss directions for future work.
Finally, our results can be found in Section \ref{sec: Results}, and the proofs in Section \ref{sec: ProofMainMixing}.

\subsection{Related work on mixing properties of Brownian motion}
\label{sec: RelatedWork}

Currently, there do not appear to be other studies of the mixing properties of a snapping-out Brownian motion with semipermeable barriers.
There is, however, such research for classical reflected Brownian motion (so for when there are only impermeable barriers).
To contextualize our work, we will discuss some of this research.

Regarding the mixing time for reflected Brownian motion, one can distinguish the following lines of research:

\begin{enumerate}[leftmargin = 2.3em, label = (\alph*)]
	\item
	      Domains with weak regularity, such as a fractal boundary, were considered by Burdzy, Chen, and Marshall \cite{burdzy2006traps}.
	      Note that in such scenarios it is, for example, not immediately clear whether the process is uniformly ergodic.
	      \cite{burdzy2006traps} studied geometric necessary and sufficient conditions for \emph{trap domains} where the process is not uniformly ergodic.
	\item
	      For compact convex domains, the process will certainly be ergodic.
	      One can therefore always study the rate of mixing in such scenarios \cite{loper2020uniform,matthews1990mixing}.
	      For instance, Loper \cite{loper2020uniform} established bounds on the mixing time in this setting that only depend on the diameter of the set and not on the dimensionality.
	\item
	      Mixing rates for the orthant with drift towards the origin were considered by Glynn and Wang \cite{glynn2018rate} for the total variation distance, and by Blanchet and Chen \cite{blanchet2016rates} as well as Banerjee and Budhiraja \cite{banerjee2020parameter} for the Wasserstein distance; see also the references therein.
\end{enumerate}

A significant motivation for the study of the stationary distribution of reflected Brownian motion comes from the heavy-traffic approximation of queuing systems.
One is then typically interested in a polyhedral domain, with a wedge being the most common example.
Harrison and Williams \cite{harrison1987multidimensional} characterized when the stationary distribution admits an exponential form in smooth or polyhedral domain with drift and oblique reflection.
An integral expression for the Laplace transform in a wedge with oblique reflection was established by Franceschi and Raschel \cite{franceschi2019integral} who also characterized when this expression can be simplified in recent work with Bousquet--M\'elou, Price, and Hardouin \cite{bousquet2025stationary}.

The aforementioned research on the stationary distribution differs in spirit from the present work: we are \emph{not} concerned with deriving an exact formula.
Rather, we establish qualitative worst-case bounds on the stationary distribution and the mixing time in terms of geometric parameters.
Closer in spirit is, for example, the work of Dai and Miyazawa \cite{dai2011reflecting}.
They studied asymptotics for the tails of the stationary distribution in the orthant.

\subsection{Further directions}
\label{sec: FutureWork}

It looks difficult to derive an all--encompassing, closed-form expression for the stationary distribution and mixing time for every possible configuration of the barriers and domain.
However, closed-form expressions may be attainable for specific configurations.
For example, in the case of the unit disk with a single circular semipermeable barrier at radius $r\in (0,1)$, the equilibrium distribution may be susceptible to exact analysis because of the radial symmetry.

Bobrowski, Ka{\'z}mierczak, and Kunze \cite{bobrowski2017averaging} showed that the snapping-out Brownian motion exhibits state space collapse in the limit of low permeability. 
Specifically, they showed that the dynamics of a snapping-out Brownian motion converges to that of a continuous-time Markov chain with finite state space corresponding to the different components of the domain separated by the barriers.
The jump rates of the limiting chain were shown to only depend on the geometry of the barriers through their length and compartment sizes.
Next, it would be relevant to quantify how well the mixing time and stationary distribution are approximated by their analytically tractable limits when permeability is low but nonzero.  

Recall that our results include examples where the mixing time and stationary distribution decay exponentially as the domain grows large and other parameters are kept fixed.
Of course, that such exponential decay occurs in a worst-case example does not mean that it always materializes.
Clarifying general sufficient or necessary conditions to avoid such instances could be an interesting avenue of research.
Our proof methods can be used in some cases to get better bounds, but not always; see Remark \ref{rem: Adaptive}.

Additionally, our examples with exponential decay in Section \ref{sec: ResultSub} involve asymmetric permeabilities.
This means that it is easier to go from one side of the barrier to the other than in the reverse direction.
Another interesting open question is thus whether exponential decay can also occur if the permeabilities are symmetric. 

\section{Results}
\label{sec: Results}

\subsection{Snapping-out Brownian motion}
\label{sec: IntroRBMDef}

We adapt the setting of \cite{vanwerde2024recovering} and focus on the case where the process takes values in a planar domain with one-dimensional smooth curves as barriers.
If desired, however, one can also readily generalize our results and the following definitions with weaker regularity assumptions or to higher dimensions.

Consider an open planar set $D_0 \subseteq \bbR^2$ whose closure $D$ is connected, bounded, and has a smooth boundary $B_0 \de \partial D$.
Additionally, consider $C^\infty$-smooth curves $B_1,\ldots,B_{\nb}\subseteq D_0$.
In what follows, the $B_i$ with $i\geq 1$ will be semipermeable barriers and $B_0$ will be an impermeable barrier, keeping the process in $D$.

We adopt some assumptions on the geometry of the barriers.
First, assume that $B_i \cap B_j = \emptyset$ for every $i\neq j$ so that there are no intersection points.
Second, assume that the domain $D$ is simply connected so that $B_0$ is a single connected curve.
Finally, assume that the $B_i$ are simple closed curves so that there are no self-intersections or endpoints.
Then, in particular, the Jordan curve theorem yields that $\bbR^2\setminus B_i$ has precisely two connected components of which only one is bounded.

The process experiences orthogonal reflections upon hitting the barrier.
Let $\vec{n}_i:B_i \to \bbR^2$ be the unique vector field orthogonal to $B_i$, pointing towards the bounded component, and with unit length.
We say that $x\in \bbR^2$ is \emph{on the positive side of $B_i$} if $x$ lies in the closure of the bounded component of $\bbR^2 \setminus B_i$.
Similarly, the \emph{negative side} is the closure of the unbounded component.
Thus, $x\in B_i$ is both on the positive and negative sides.

Randomness from auxiliary processes is used to drive the movements of the process.
Denote $W_t$ for a planar Wiener process.
Further, fix scalars $\lambda_i^+,\lambda_i^->0$ for every $i\geq 1$, specifying the permeability of the two sides of each barrier and consider \cadlag\ continuous-time Markovian processes $s_i(t)$ taking values in $\{+1,-1 \}$ with transition rate $\lambda_i^+$ (resp.\ $\lambda_i^-$) from $-1$ to $+1$ (resp.\ from $+1$ to $-1$).
Finally, let $s_0(t) \de +1$ for all $t\geq 0$.

\pagebreak[3]
\begin{definition}\label{def: ReflectedBrownianMotion}
	Let $X_t$ and $\Lc{i}_t$ be continuous stochastic processes which take values in $D$ and $\bbR_{\geq 0}$, respectively.
	Then, $X_t$ is called \emph{a reflected Brownian motion with semipermeable barriers $B_i$ and local times $\Lc{i}_t$} if the following properties hold with probability one:
	\begin{enumerate}[leftmargin=2.5em, label = (\roman*)]
		\item\label{item: Def_ReflectedBrownianMotion_i}
		      $
			      \intd X_t =  \intd W_t +  \sum_{i=0}^{\nb} s_i(\Lc{i}_t) \vec{n}_i(X_t) \bb1\{X_t \in B_i \}\intd \Lc{i}_t.\nonumber
		      $
		\item\label{item: Def_ReflectedBrownianMotion_ii} For $i \leq {\nb}$, $\Lc{i}_t$ is nondecreasing, satisfies $\Lc{i}_0 =0$, and increases at time $t$ if and only if $X_t \in  B_i$.
		      That is,
		      \begin{align}
			      \Lc{i}_t = \int_0^t \bb1\{X_r\in B_i \}\, \intd \Lc{i}_r.\nonumber
		      \end{align}
		\item\label{item: Def_ReflectedBrownianMotion_iii} For $i \leq {\nb}$, if $s_i(\Lc{i}_t) = +1$ then $X_t$ is on the positive side of $B_i$.
		      Similarly, if $s_i(\Lc{i}_t) = -1$ then $X_t$ is on the negative side of $B_i$.
	\end{enumerate}
	The process $X_t$ may also be referred to as a \emph{snapping-out Brownian motion}.
\end{definition}
\pagebreak[2]
Processes satisfying Definition \ref{def: ReflectedBrownianMotion} are shown to exist pathwise uniquely given an initial condition in \cite{vanwerde2024recovering}.
Note that the law of the process $X_t$ not only depends on the initial condition $X_0$, but also on $s_i(0)$ if $X_0 \in B_i$.
For brevity, however, we will suppress this from the notation whenever it does not cause ambiguity.

\subsection{Environmental parameters}\label{sec: Parameters}
Consider lower and upper bounds on the permeabilities:
\begin{align}
	\lambda_{\min} & \de \min\bigl\{\lambda_i^*: i\in \{1,\ldots,{\nb} \}, \ * \in \{+,- \} \bigr\}, \label{eq:VagueUnit}        \\
	\lambda_{\max} & \de \max\bigl\{\lambda_i^*: i\in \{1,\ldots,{\nb} \},\,  *\in \{+,-\} \bigr\}\label{eq: Def_lambda_minmax}.
\end{align}
The relation between these parameters and the mixing properties is intuitive.
If $\lambda_{\min}$ is small, then one may stay stuck behind a barrier for a long time.
Further, if $\lambda_{\max}/\lambda_{\min}$ is large, then one may briefly transition to the other side but quickly get absorbed again.
Thus, large but asymmetric permeability can also be problematic.

The following two parameters are of a geometric nature, related to the difficulty of the configuration of barriers; see Figure \ref{fig: Pathologies}.
For every $i\leq {\nb} $ let $k_i:B_i \to \bbR_{\geq 0}$ be the \emph{unsigned curvature} of $B_i$.
Denote
\begin{align}
	\kappa \de \max\bigl\{ k_i(x): i\in \{0,1,\ldots,{\nb} \},\, x \in B_i \bigr\}. \label{eq: Def_kappa}
\end{align}
A bound on $\kappa$ rules out scenarios where the curves squiggle wildly.
It is further useful to ensure some minimal spacing between the barriers.
For $x\in \bbR^2$ let $\sB(x,r)$ denote the open ball of radius $r>0$ and set
\begin{align}
	\rho & \de \sup\bigl\{r \geq 0: \sB(x,r')\cap (\cup_{i=0}^{\nb} B_i)  \textnormal{ is connected }\forall r' \leq r,\, \forall x\in D\bigr\}. \label{eq: Def_rho}
\end{align}
This parameter serves to avoid scenarios where distinct barriers lie extremely close to each other, or scenarios where a single barrier doubles over with a U-bend and thus has a small separation from itself.
Such cases could be problematic for the mixing, as they may result in the effective permeability of a region of the domain being lowered.

\begin{figure}[h!]
	\centering
	\includegraphics[width = 0.9\textwidth]{./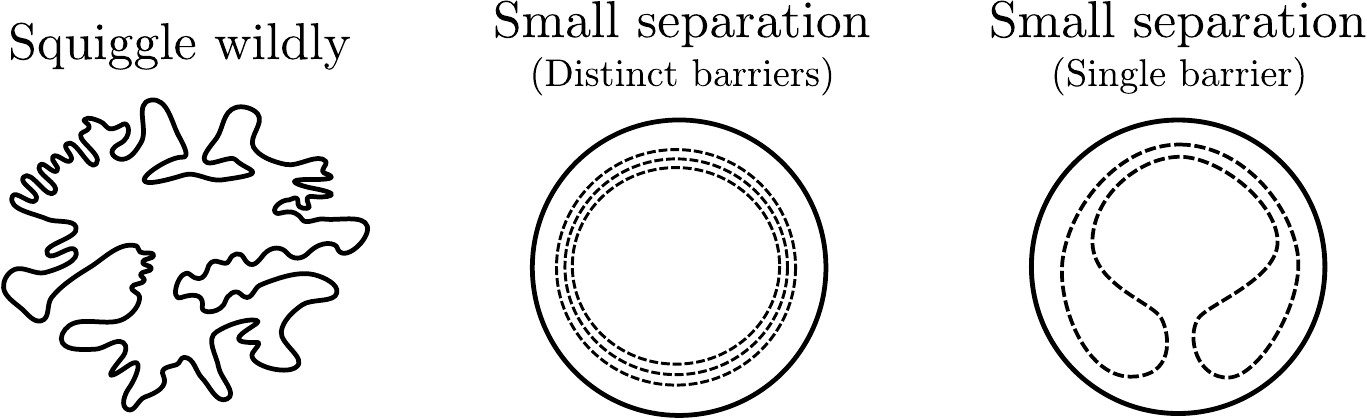}
	\caption{%
		Examples of pathological geometries that could complicate the mixing properties.
		A bound on $\kappa$ allows ruling out the leftmost example, but is not sufficient to rule out the two examples on the right.
		This is where a bound on $\rho$ is required.
	}
	\label{fig: Pathologies}
\end{figure}

Finally, it is natural that the size of the domain may be relevant for the mixing properties.
To quantify this, we consider the area $\Area(D)$ and the diameter in the geodesic distance:
\begin{align}
	\Delta \de \sup_{x,y\in D}\inf_\gamma \int_0^1 \Vert \gamma'(t) \Vert \, \intd t. \label{eq: Def_Delta}
\end{align}
Here, the infimum runs over piecewise smooth curves $\gamma:[0,1]\to D$ with $\gamma(0) = x$ and $\gamma(1) =y$ and $\Vert \cdot \Vert$ denotes the Euclidean norm on $\bbR^2$.

\subsection{Results}\label{sec: ResultSub}
Let $\pi$ denote the stationary distribution of the process $X_t$.
We are concerned with the \emph{mixing time}
\begin{align}
	\tmix \de \inf\bigl\{t\geq 0 :  \lvert \bbP(X_t \in E \mid X_0 = x_0) - \pi(E) \rvert \leq 1/4  \textnormal{ for all }x_0,\, E\bigr\}.\label{eq: Def_tmix}
\end{align}
Here, $x_0\in D$ runs over all possible initial conditions and $E\subseteq D$ over all measurable subsets with nonzero area.
Further, we consider the quantity
\begin{align}
	\pi_{\min} \de \inf\bigl\{\pi(E)/\Area(E) :\textnormal{measurable }E\subseteq D \textnormal{ with }\Area(E)>0 \bigr\}.\label{eq: Def_pimin}
\end{align}
to quantify if there are infrequent regions in the domain.
Theorem \ref{thm: Main_mixing} gives worst-case bounds on $\tmix$ and $\pi_{\min}$ in terms of the parameters from Section \ref{sec: Parameters}:

\begin{theorem}
	\label{thm: Main_mixing}
	There exists an absolute constant $c>0$ so that with $R \de c\min\{1/\kappa, \allowbreak 1/\lambda_{\max}, \allowbreak \rho \}$,
	\begin{align}
		\tmix \leq  \Delta^2 (R \lambda_{\min} )^{-\Delta/R} \quad \text{ and } \quad \pi_{\min} \geq (R \lambda_{\min})^{\Delta/R}/\Area(D).  \label{eq:KindNose}
	\end{align}
\end{theorem}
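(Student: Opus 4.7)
The plan is to establish a Doeblin-type minorization
\[
\bbP(X_{t_0} \in E \mid X_0 = x_0)\geq \varepsilon\,\Area(E)/\Area(D),
\]
uniform in the starting point $x_0\in D$ and in measurable $E\subseteq D$. A standard argument then converts this simultaneously into a mixing time of order $t_0/\varepsilon$ and a stationary lower bound $\pi_{\min}\geq \varepsilon/\Area(D)$ (by integrating the minorization against $\pi$). To match the target~\cref{eq:KindNose}, the plan is to obtain $t_0$ of order $\Delta^2$ and $\varepsilon$ of order $(R\lambda_{\min})^{\Delta/R}$, with $R=c\min\{1/\kappa,1/\lambda_{\max},\rho\}$ as in the statement.

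\textbf{Step 1: local transport at scale $R$.} The building block to establish is that, for any $y,z\in D$ at Euclidean distance at most $R$,
\[
\bbP\bigl(X_{cR^2}\in \sB(z,R/2)\mid X_0=y\bigr)\geq \alpha\,(R\lambda_{\min})
\]
for absolute constants $c,\alpha>0$. The definition of $R$ is exactly what is needed to tame the local geometry: inside a disk of radius $O(R)$ at most one barrier strand is present (from the $\rho$ bound), that strand is nearly flat (from the $\kappa$ bound), and the signal transition rates are small on the relevant time scale (from the $\lambda_{\max}$ bound). One then verifies the estimate in a short list of canonical configurations, such as the pill-shaped straddle of a barrier and the detour around a barrier depicted in Figures~\ref{fig: PillShaped}--\ref{fig: AvoidBarrier}. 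When no barrier separates $y$ from $z$, classical Gaussian small-ball bounds for (reflected) Brownian motion in a near-flat region suffice. When a barrier does separate them, the SDE~\ref{item: Def_ReflectedBrownianMotion_i} combined with Tanaka-type estimates shows that the local time $\Lc{i}_{cR^2}$ reaches order $R$ with positive probability; conditional on this event the probability that $s_i(\Lc{i}_\cdot)$ switches to the correct sign at least once is at least $1-\exp(-\lambda_{\min}\cdot \Theta(R))\gtrsim R\lambda_{\min}$.

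\textbf{Steps 2--3: geodesic chaining and lifting to sets.} For arbitrary $x_0,z_0\in D$, connect them by a piecewise-smooth curve in $D$ of length $\leq \Delta$ (cf.~\cref{eq: Def_Delta}) and cover it by a chain of $N\leq C\Delta/R$ overlapping disks of radius $R/2$. Iterating the local estimate along this chain via the strong Markov property (Figure~\ref{fig: Geo}) yields
\[
\bbP\bigl(X_{NcR^2}\in \sB(z_0,R/2)\mid X_0=x_0\bigr)\geq (\alpha R\lambda_{\min})^{N},
\]
which carries the exponent $\Delta/R$, and $NcR^2\lesssim \Delta R \leq \Delta^2$ supplies the needed $t_0$. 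To upgrade this ball-wise bound to a minorization against general $E$, take a Vitali-style packing of $D$ by disjoint disks of radius $R/2$, apply the chained estimate at each packing center, and use a standard Gaussian-type lower bound on the snapping-out transition density inside a single disk (via one additional time-$R^2$ step, which is untroubled because $R\lesssim 1/\kappa,1/\lambda_{\max},\rho$) to pass from ``in a ball'' to ``with density $\gtrsim 1/\Area(D)$ against area''.

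\textbf{Main obstacle.} The delicate point is in Step~1: the prefactor must genuinely be $R\lambda_{\min}$ rather than the weaker $\lambda_{\min}R^2$ one would obtain by bounding the probability that $s_i$ flips during actual time $R^2$. The correct clock for signal flips is not $t$ but the local time $\Lc{i}_t$, which behaves like $\sqrt{t}$ near the barrier and thus reaches order $R$ in time $R^2$; since the hazard per unit of local time is $\lambda_{\min}$, one crossing attempt succeeds with probability of order $R\lambda_{\min}$. This sharper bound is essential, as the crude $\lambda_{\min}R^2$ would double the power of $R$ in Step~2 and ruin the final estimate. Once the local transport lemma is in place, the geodesic chaining in Step~2 and the Vitali packing in Step~3 are routine.
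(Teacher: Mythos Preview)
Your overall architecture matches the paper's: Doeblin minorization, obtained by local transport at scale $R$ (Section~\ref{sec: CloseEndpoints}) followed by geodesic chaining (Section~\ref{sec: DistantEndpoints}). You also correctly isolate the genuine subtlety in Step~1---that the relevant clock for the sign process $s_i$ is local time rather than real time, so the crossing probability is of order $R\lambda_{\min}$, not $R^2\lambda_{\min}$. That is exactly the content of Lemmas~\ref{lem: W_si_nice}--\ref{lem: ReductionSameSide}.

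The gap is in Step~3. You call the passage from ``land in a ball of radius $R/2$'' to ``density against area'' \emph{untroubled}, handled by one additional time-$R^2$ step with a ``standard Gaussian-type lower bound on the snapping-out transition density inside a single disk''. But this density bound must hold at \emph{every} target point $y\in D$, in particular at points arbitrarily close to a barrier, and with target balls $\sB(y,\varepsilon)$ of arbitrarily small radius. The paper singles out precisely this step as the delicate one (see the paragraph opening Section~\ref{sec: ReductionClose}): the local-approximation machinery of Lemma~\ref{lem: BarrierLocallyStraight} produces error terms of size $\Theta(\delta r_\delta)$, and once $\varepsilon$ drops below that scale the approximation swamps the target and the estimate becomes vacuous. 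So a direct ``Gaussian-type'' argument in a disk containing a curved barrier does \emph{not} go through with the tools already in hand.

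The paper's fix is not a sharper local estimate but a structural trick, Lemma~\ref{lem: TrickTimeRev}: on the event that no barrier is crossed, the process coincides with classical reflected Brownian motion in a single compartment, which is time-reversible with respect to the uniform measure. This lets one exchange the roles of starting point and target set, after which the difficult ``$y$ close to a barrier'' case becomes the easy ``$x_0$ close to a barrier'' case (Lemma~\ref{lem: x0_reduction_distant}), handled simply by waiting a short time for the process to drift away from the barrier. Your Vitali-packing route would need either this time-reversal idea or a separate density lower bound for snapping-out Brownian motion near a curved barrier at arbitrarily fine scales; neither is the ``one additional step'' you suggest.
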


Observe that an exponential decay occurs in the quality of the bounds if $\Delta/R \to \infty$ in Theorem \ref{thm: Main_mixing}.
While it is possible that the exact formulation of the estimates could still be improved, as we make a few rough estimates to simplify notation throughout the proofs, this qualitative behavior of exponential decay is necessary as a worst-case estimate.

\pagebreak[3]
{Indeed, suppose that the barriers are a sequence of $2n+1$ nested circles, as displayed}
\begin{wrapfigure}[12]{r}{0.26\textwidth}
	\centering
	\vspace{-0.65em}
	\includegraphics[width=0.25\textwidth]{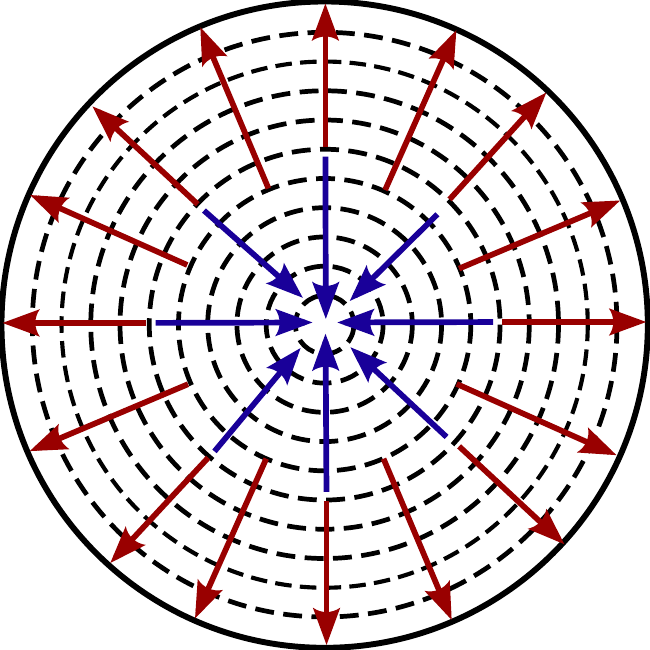}
	\vspace{-0.45em}
	\caption{}
	\label{fig: WorstCase}
	\vspace{0.2em}
\end{wrapfigure}
in Figure \ref{fig: WorstCase}:
\begin{align}
	B_i \de \{x \in \bbR^2: \Vert x \Vert = (2n+1 -i) \}\textnormal{ for }i=0,1,\ldots,2n\nonumber  \qquad \qquad \qquad  \qquad  \qquad  \qquad
\end{align}
where $n\geq 1$ is an integer.
Then, choosing the permeabilities appropriately so that $\lambda_i^+$ is somewhat smaller than $\lambda_{i}^-$ for $ i\leq n$ and greater for $i>n$, it can be ensured that there is effectively an inwards (resp.\ outwards) drift when $X_{t}$ is in the inner (resp.\ outer) half of the domain.
This gives rise to metastable behavior as $n\to \infty$ where $X_t$ stays stuck in the half of the domain where it starts for an exponential amount of time.
This demonstrates the necessity of exponential behavior in $\tmix$ as $\Delta/R\to \infty$ since the diameter $\Delta$ here grows linearly in $n$ while the other parameters remain fixed.

A similar example is applicable for the minimal value of the stationary distribution $\pi_{\min}$: simply take $\lambda_i^+$ smaller than $\lambda_i^-$ for all $i\leq 2n$.
Then, the resulting effective outwards drift will ensure that the stationary distribution becomes exponentially small near the center of the disc as $n\to \infty$.
% One could make the both sketches rigorous by considering the discrete Markov chain found by observing $X_{t}$ when it jumps to the other side of a barrier, but we do not pursue a detailed proof here for brevity.

\section{Proof of \texorpdfstring{Theorem \ref{thm: Main_mixing}}{Theorem}}\label{sec: ProofMainMixing}
The strategy is to reduce to the Doeblin minorization condition; see Lemma \ref{lem: ReductionBall}.
To verify the latter, it suffices to determine some $T >0$ and $c>0$ such that
\begin{equation}
	\bbP(X_T \in \sB(y,\varepsilon) \mid X_0 = x_0) \geq c\varepsilon^2
	\label{eqn:lower_bound_goal_towards_Doeblin_minorization}
\end{equation}
for every $y\in D$ and small $\varepsilon>0$.

Section \ref{sec: CloseEndpoints} estimates \eqref{eqn:lower_bound_goal_towards_Doeblin_minorization} in the case where $x_0$ and $y$ are sufficiently distant from all barriers, but not necessarily on the same side, for the special case where $x_0$ and $y$ lie close together.
Section \ref{sec: DistantEndpoints} next applies this repeatedly to also cover points that are distant from each other yet still at nontrivial distance from the barriers.

It then remains to consider the scenario where $x_0$ or $y$ could lie close to a barrier.
This is done in Section \ref{sec: ReductionClose} by reducing to the case where they are distant from the barriers.
The reduction for $x_0$ is fairly straightforward, but the reduction for the endpoints $y$ requires an additional trick exploiting the time-reversibility of classical reflected Brownian motion without semipermeable barriers; see Lemma \ref{lem: TrickTimeRev}.
We conclude in Section \ref{sec: ProofThm}.

The reader may find it helpful to start by glossing over the figures in this section to get a global idea of the proof approach.
In particular, the informal ideas involved with the local estimates when $x_0$ and $y$ lie close to one another are visualized in Figures \ref{fig: ForcedSmash} and \ref{fig: AvoidBarrier}.
That a sequence of local steps can be used for a long-range estimate is visualized in Figure \ref{fig: Geo}.

% \begin{figure}[hbt]
%     \centering 
%     \includegraphics[width = 0.95\textwidth]{./chapter/figures_Recovery/ZoomedLocalStraight.png}
%     \caption{Visualization of Lemma \ref{lem: BarrierLocallyStraight}.
%     That $r_\delta < \rho$ ensures that there is at most one barrier which intersects $\sB(x_0,r_0)$. 
%     Moreover, because $\kappa$ is an upper bound on the curvature, the barriers are approximately straight at scale $r_\delta$. 
%     Hence, the normal vectors $\vec{n}_i$ are locally well-approximated by a fixed vector $\hatn$ and the barrier is locally contained in a strip of points $y$ with $\langle y, \hatn \rangle \approx \mathfrak{c}$. 
%     }
%     \label{fig: ZoomedStraight}
% \end{figure}

\subsection{Points distant from the barriers and close together}\label{sec: CloseEndpoints}

Fix some small $\delta >0$ and introduce a characteristic scale of distance by
\begin{align}
	r_\delta \de \delta\min\{1/\kappa, 1/\lambda_{\max},\rho \}. \label{eq: Def_R_eps}
\end{align}
Denote $\cD(\delta)$ for the set of points in $D$ of distance $\geq  r_\delta/5$ from all barriers:
\begin{align}
	\cD(\delta) \de  \bigl\{x\in D: \Vert x - z \Vert \geq   r_\delta/5 ,\ \forall z \in \cup_{i=0}^m B_i  \bigr\}.\label{eq: Def_Deps}
\end{align}
Our goal here is to estimate $\bbP(X_t \in \sB(y,\varepsilon) \mid X_0 = x_0)$ for all $x_0,y \in \cD(\delta)$ which are sufficiently close together, say $\Vert x_0-y \Vert \leq (4/5)r_\delta$.

There are two main challenges.
First, if $x_0$ and $y$ lie on different sides of a barrier, then it has to be shown that the barrier is crossed with nontrivial probability.
Second, it has to be shown that we move from $x_0$ to a small neighborhood of $y$ with nontrivial probability when they lie on the same side of all barriers.
The first challenge is dealt with in Lemma \ref{lem: ReductionSameSide}, the second in Lemma \ref{lem: TransportSameSide}, and we combine the estimates in Corollary \ref{cor: TransportClose}.
The proofs for both lemmas use the following elementary property of the driving Wiener process; see Figure \ref{fig: PillShaped} for a visualization.

\begin{lemma}\label{lem: PillShaped}
	For every $\gamma \in (0,1)$ there exists a constant $c>0$ depending only on $\gamma$ such that the following holds.
	Consider $R>0$ and $y\in \bbR^2$ with $\Vert y \Vert \leq R$.
	Then, for every $\varepsilon \leq \gamma R/2$,
	\begin{align}
		\bbP\bigl(W_{R^2} \in \sB(y, \varepsilon) \text{ and } \Vert W_t - (t/R^2)y \Vert \leq \gamma R, \, \forall t\leq R^2 \bigr) \geq c(\varepsilon/R)^2. \label{eq:TediousFish}
	\end{align}
\end{lemma}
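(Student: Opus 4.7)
The plan is to reduce to standard small-ball estimates for a planar Wiener process by two routine manipulations: Brownian scaling and a Cameron--Martin change of measure.

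\emph{Step 1 (rescaling).} The rescaled process $W'_s \de R^{-1}W_{R^2 s}$ is again a standard planar Wiener process. Setting $y' \de y/R$ and $\varepsilon' \de \varepsilon/R$, the desired inequality \eqref{eq:TediousFish} becomes the claim that for some $c = c(\gamma)>0$ and all $y'$ with $\Vert y'\Vert \leq 1$ and $\varepsilon' \leq \gamma/2$,
\begin{align}
\bbP\bigl(W'_1 \in \sB(y',\varepsilon')\text{ and } \Vert W'_t - ty' \Vert \leq \gamma,\ \forall t\leq 1\bigr) \geq c(\varepsilon')^2. \nonumber
\end{align}
This reduces the problem to the case $R=1$.

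\emph{Step 2 (removing the drift).} Let $\bbQ$ be the probability measure defined by $\intd \bbQ/\intd \bbP = \exp(\langle y', W_1\rangle - \tfrac12\Vert y'\Vert^2)$. By Cameron--Martin, $\tilde W_t \de W_t - ty'$ is a standard planar Wiener process under $\bbQ$, and the event in question transforms to $A' \de \{\tilde W_1 \in \sB(0,\varepsilon'),\ \Vert \tilde W_t \Vert \leq \gamma\ \forall t\leq 1\}$ in the $\tilde W$-coordinates. On $A'$ one has $W_1 = \tilde W_1 + y'$ with $|\langle y',\tilde W_1\rangle| \leq \Vert y'\Vert\,\varepsilon' \leq 1/2$ and $\tfrac12\Vert y'\Vert^2 \leq 1/2$, so the reverse Radon--Nikodym derivative
\begin{align}
\exp\bigl(-\tfrac12\Vert y'\Vert^2 - \langle y',\tilde W_1\rangle\bigr) \geq \euler^{-1} \nonumber
\end{align}
on $A'$. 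It therefore suffices to establish $\bbQ(A') \geq c(\gamma)(\varepsilon')^2$ for a standard 2D Wiener process $\tilde W$.

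\emph{Step 3 (confined small-ball estimate).} Using the inclusions $[-\gamma/\sqrt2,\gamma/\sqrt2]^{2}\subseteq \sB(0,\gamma)$ and $[-\varepsilon'/\sqrt2,\varepsilon'/\sqrt2]^{2}\subseteq \sB(0,\varepsilon')$, I bound $\bbQ(A')$ below by the product over the two independent coordinates of $\tilde W$ of the 1D events $\{|\tilde W^{(i)}_t|\leq \gamma/\sqrt2\ \forall t\leq 1,\ |\tilde W^{(i)}_1|\leq \varepsilon'/\sqrt2\}$. The absorbed heat kernel of 1D Brownian motion on $[-\gamma/\sqrt2,\gamma/\sqrt2]$ at time $1$ is continuous and strictly positive on the closed sub-interval $[-\gamma/(2\sqrt2),\gamma/(2\sqrt2)]$ (as follows from the standard Dirichlet eigenfunction expansion on an interval), so each 1D factor is at least $c_1(\gamma)\varepsilon'$. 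Multiplying the two factors yields the required lower bound of order $(\varepsilon')^2$.

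\emph{Main obstacle.} All three steps are routine; the only place that merits care is the 1D confined small-ball estimate in Step 3, and appealing to the inscribed square sidesteps any two-dimensional heat-kernel computations on a disk.
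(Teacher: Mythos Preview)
Your proof is correct. Both you and the paper first rescale to $R=1$, but the key second step differs: the paper conditions on the endpoint $W_1$, reducing the tube condition to a small-ball estimate for the Brownian bridge $\fB_t = W_t - tW_1$, whereas you use a Cameron--Martin change of measure to straighten the drift $ty'$ and then bound a confined small-ball probability for standard Brownian motion. The conditioning approach is slightly more direct conceptually (the bridge is exactly the deviation from the interpolating line), while your Girsanov route avoids bridges entirely at the cost of tracking the Radon--Nikodym derivative; your explicit control of that derivative on $A'$ is where the hypothesis $\varepsilon \leq \gamma R/2$ enters for you, whereas for the paper it enters via the triangle inequality $\Vert W_t - (t/R^2)y\Vert \leq \Vert \fB_t\Vert + (t/R^2)\varepsilon$. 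Your Step~3 is also more explicit than the paper's, which simply asserts that the rescaled bridge small-ball probability is a positive constant; reducing to the inscribed square and the 1D absorbed heat kernel is a clean way to make that constant concrete.
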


\begin{proof}
	Let $c_1 \de \exp(-2)/2\pi$ be the minimum of the density of a $2$-dimensional Gaussian random vector on the ball of radius $2$.
	Then, for every $\varepsilon\leq R$, by the scaling principle,
	\begin{align}
		\bbP(W_{R^2} \in \sB(y,\varepsilon)) = \bbP(W_{1} \in \sB(y/R,\varepsilon/R)) \geq c_1(\varepsilon/R)^2.
	\end{align}
	Hence, conditioning on $W_{R^2}$, it suffices to show that there exists $c_2 >0$ such that
	\begin{align}
		\inf_{x \in \sB(y,\varepsilon)}\bbP\bigl( \Vert W_t - (t/R^2)y \Vert \leq \gamma R ,\, \forall t\leq R^2 \mid W_0 = 0, W_{R^2} = x \bigr) \geq c_2.\label{eq:ValidInk}
	\end{align}
	Fix some $x \in \sB(y,\varepsilon)$ and let $\fB_t \de W_t - (t/R^2)x$ for every $t\leq R^2$.
	Then, conditional on $W_{R^2} = x$, the process $\{\fB_t:0\leq  t\leq R^2 \}$ is a two-dimensional Brownian bridge pinned to $0$ at times $0$ and $R^2$.
	Further, using that $\varepsilon\leq  \gamma R/2$ with the triangle inequality, we have $\Vert W_t - (t/R^2)y \Vert \leq \gamma R$ whenever $\Vert \fB_t \Vert \leq \gamma R/2$.
	Hence,
	\begin{align}
		\bbP\bigl(\Vert W_t - (t/R^2)y \Vert \leq \gamma R  , \forall t\leq{} & {} R^2 \mid W_0 = 0, W_{R^2} = x \bigr)\label{eq:MagicGum}                                                        \\
		                                                                      & \geq \bbP\bigl(\Vert \fB_t \Vert \leq \gamma R/2, \forall t\leq R^2\mid \fB_0 = 0, \fB_{R^2} = 0 \bigr).\nonumber
	\end{align}
	The scaling principle allows one to eliminate the $R$-dependence from the right-hand side of \eqref{eq:MagicGum}, giving a nonzero constant depending only on $\gamma$.
	This proves \eqref{eq:ValidInk}, as desired.
\end{proof}
\begin{figure}[hbt]
	\centering
	\includegraphics[width = 0.8\textwidth]{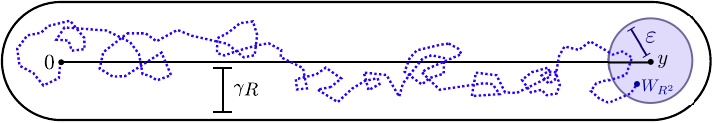}
	\caption{
		Visualization of the event described in the probability of \eqref{eq:TediousFish} from Lemma \ref{lem: PillShaped}.
		This event asks that the Wiener process $W_t$, represented by the dotted blue line, progresses in a straight line towards $y$ up to a moderate deviation of distance $\leq \gamma R$ and that $W_{R^2}$ ends up in a neighborhood of radius $\varepsilon$ of $y$.
		Here, $\varepsilon >0$ could be arbitrarily small.
	}
	\label{fig: PillShaped}
\end{figure}

\subsubsection{Crossing a barrier}
Consider the case where $x_0$ and $y$ lie on different sides of a barrier.
Let $\cZ(\delta,y)$ be the set of points $z\in \cD(\delta/2)$ at distance $\leq (4/5) r_\delta$ from $y$ and on the same side of every barrier as $y$.
Then, Lemma \ref{lem: ReductionSameSide} shows that $X_{ r_\delta^2} \in \cZ(\delta,y)$ with nonzero probability.

The proof idea is described in the caption of Figure \ref{fig: ForcedSmash}.
We subdivide the execution of this idea into a sequence of smaller lemmas.
The first step is to show that $s_i$ and $W_t$ have certain convenient properties with nonzero probability:
\begin{lemma}\label{lem: W_si_nice}
	For every $\alpha, \beta,\gamma >0$, there exists a constant $c>0$ such that the following holds for every $\delta < 1/2$.
	Consider $x_0,y \in \cD(\delta)$ with $\Vert x-y \Vert \leq (4/5) r_\delta$ such that $x_0$ and $y$ lie on different sides of some barrier $B_i$ with $i\geq 1$.
	Define
	\begin{align}
		\cE_1 \de \inf\{t\geq 0: s_i(t) \neq s_i(0) \} \quad \text{ and }\quad \cE_2 \de \inf\{t\geq \cE_1: s_i(t)\neq  s_i(\cE_1)\} - \cE_1 \nonumber
	\end{align}
	to be the first two increments between times for which $s_i$ changes value.
	Then,
	\begin{align}
		\bbP\bigl( \cE_1 <  \alpha r_\delta, \,  \cE_2 >  \beta r_\delta, \,  \Vert   W_t - (t/ r_\delta^2) (y- x_0) \Vert \leq   \gamma r_\delta,\, \forall t\leq  r_\delta^2 \bigr) \geq c  r_\delta \lambda_{\min}.  \label{eq:SlowIce}
	\end{align}
\end{lemma}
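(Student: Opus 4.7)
The plan is to exploit the independence of the driving Wiener process $W_t$ and the auxiliary switching process $s_i(t)$, both introduced in Section~\ref{sec: IntroRBMDef}. The event in \eqref{eq:SlowIce} is the intersection of a pure $s_i$-event, namely $\{\cE_1 < \alpha r_\delta,\ \cE_2 > \beta r_\delta\}$, and a pure $W$-event, namely the tube condition $\Vert W_t - (t/r_\delta^2)(y-x_0)\Vert \leq \gamma r_\delta$ for all $t\leq r_\delta^2$. The probability therefore factors and it suffices to lower bound each factor separately.

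For the $s_i$-part, the strong Markov property of the two-state chain $s_i$ shows that $\cE_1$ and $\cE_2$ are independent exponential random variables whose rates lie in $\{\lambda_i^+,\lambda_i^-\} \subseteq [\lambda_{\min},\lambda_{\max}]$. Since $r_\delta \lambda_{\max} \leq \delta < 1/2$, an elementary bound $1-e^{-x} \geq c(M)\, x$ valid on any interval $x\in[0,M]$ yields
\begin{align*}
	\bbP(\cE_1 < \alpha r_\delta) \geq c_1(\alpha)\, \lambda_{\min}\, r_\delta \quad\text{and}\quad \bbP(\cE_2 > \beta r_\delta) \geq e^{-\lambda_{\max} \beta r_\delta} \geq e^{-\beta/2},
\end{align*}
and these multiply by independence of $\cE_1$ and $\cE_2$.

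For the $W$-event, I would apply Lemma~\ref{lem: PillShaped} directly with $R = r_\delta$, the role of its $y$ played by $y - x_0$ (which satisfies $\Vert y - x_0 \Vert \leq (4/5) r_\delta \leq r_\delta$), and $\varepsilon = \gamma r_\delta/2$. The hypothesis $\varepsilon \leq \gamma R/2$ then holds with equality, and the conclusion gives a lower bound $c(\varepsilon/R)^2 = c\gamma^2/4$, a positive constant depending only on $\gamma$. Note that this is even stronger than what is needed, since Lemma~\ref{lem: PillShaped} additionally pins $W_{r_\delta^2}$ into a small ball around $y-x_0$; for the present lemma the pinning may simply be discarded.

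Multiplying the three lower bounds by independence of $W$ from $s_i$ then yields the claimed estimate $\geq c\, r_\delta \lambda_{\min}$ with $c$ depending only on $\alpha,\beta,\gamma$. The argument is essentially routine; the only point worth verifying carefully is that the factor $r_\delta \lambda_{\min}$ comes from exactly one source, namely the rare-switch event $\{\cE_1 < \alpha r_\delta\}$, while the complementary tail $\{\cE_2 > \beta r_\delta\}$ contributes only a $\delta$-independent multiplicative constant thanks to the bound $r_\delta \lambda_{\max}\leq \delta < 1/2$.
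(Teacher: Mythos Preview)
Your proposal is correct and follows essentially the same approach as the paper: factor the event by independence of $W$ and $s_i$, bound the $s_i$-part using that $\cE_1,\cE_2$ are independent exponentials with rates in $[\lambda_{\min},\lambda_{\max}]$ together with $r_\delta\lambda_{\max}\leq \delta<1/2$, and bound the $W$-part by invoking Lemma~\ref{lem: PillShaped} with $R=r_\delta$ and $\varepsilon=\gamma r_\delta/2$, discarding the endpoint-pinning conclusion. The constants and intermediate inequalities differ only cosmetically from the paper's version.
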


\begin{proof}
	Note that $\cE_1$ and $\cE_2$ are independent exponential random variables with rate $\lambda_i^+$ or $\lambda_i^-$, depending on what side of the barrier $x_0$ lies.
	In particular, the rate of $\cE_1$ is at least $\lambda_{\min}$ and the rate of $\cE_2$ is at most $\lambda_{\max}$.
	Hence,
	\begin{align}
		\bbP(\cE_1 <  \alpha r_\delta \text{ and }\cE_2 >  \beta r_\delta) & = \bbP(\cE_1 <  \alpha r_\delta) \bbP(\cE_2 > \beta r_\delta)\label{eq:MythicalCell}                      \\
		                                                                   & \geq \bigl(1-\exp(-\lambda_{\min}  \alpha r_\delta) \bigr)\exp(- \beta \lambda_{\max} r_\delta).\nonumber
	\end{align}
	Note that $1-\exp(-x) \leq x$ for $x>0$ and note that $\lambda_{\max} r_\delta  < 1$ by \eqref{eq: Def_R_eps}.
	It hence follows from \eqref{eq:MythicalCell} that with $c_1 \de \alpha \exp(-\beta)$,
	\begin{align}
		\bbP(\cE_1 <  \alpha r_\delta \text{ and }\cE_2 > \beta r_\delta) \geq c_1  r_\delta \lambda_{\min}.\label{eq:HauntedRug}
	\end{align}
	We next apply Lemma \ref{lem: PillShaped} with $R =  r_\delta$.
	The additional parameter $\delta$ in Lemma \ref{lem: PillShaped} is not important here, let us take $\delta = \gamma R/2$.
	Then, we find $c_2 >0$ depending only on $\gamma$ such that
	\begin{align}
		\bbP\bigl(\Vert  W_t - (t/ r_\delta^2) (x_0 - y) \Vert \leq   \gamma r_\delta,\, \forall t\leq  r_\delta^2\bigr) \geq c_2.\label{eq:PinkNose}
	\end{align}
	Combine \eqref{eq:HauntedRug} with \eqref{eq:PinkNose} and use that $W_t$ is independent of $s_i$ by assumption to conclude.
\end{proof}

Recall from \eqref{eq: Def_kappa} that $\kappa$ is an upper bound on the curvature of the barriers.
In particular, the barriers are well-approximated by straight lines at the scale of $r_{\delta}$.
The following lemma makes this precise and will be used repeatedly:
\begin{lemma}[{Lemma 3.5 in \cite{vanwerde2024recovering}}]\label{lem: BarrierLocallyStraight}
	Suppose that $\delta < 1/2$.
	Then, for every $x_0 \in \bbR^2$, there is at most one barrier $B_i$ which intersects $\sB(x_0,r_{\delta})$.
	Moreover, if such a $B_i$ exists, then there exists a unit vector $ \hatn \in \bbR^2$ depending on $x_0$ and $B_i$ with the following properties:
	\begin{enumerate}[leftmargin = 2em, label = (\arabic*)]
		\item\label{item:GhostlyQuip} There exists some $\mathfrak{c}\in \bbR$ such that $\lvert \langle  y, \hatn\rangle - \mathfrak{c} \rvert <  4\delta r_\delta$ for every $y\in B_i\cap \sB(x_0, r_\delta)$.
		\item\label{item:IdleBat} One has $\Vert \vec{n}_i(y)-  \hatn \Vert < 2\delta$ for every $y\in B_i \cap \sB(x_0, r_\delta)$.
		\item\label{item:JollyDragon} Every point $z \in \sB(x_0, r_\delta)$ on the positive side of $B_i$ satisfies $ \langle  z, \hatn\rangle > \mathfrak{c} -  4\delta r_\delta$.
		      Similarly, it holds for every $z\in \sB(x_0, r_\delta)$ on the negative side of $B_i$ that $ \langle  z, \hatn\rangle < \mathfrak{c}  +  4\delta r_\delta$.
	\end{enumerate}
\end{lemma}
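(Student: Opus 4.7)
The plan is in three stages: first use $\rho$ to see that the ball meets at most one barrier and in a single connected arc; second use the curvature bound to control the geometry of that arc; third deduce \ref{item:GhostlyQuip}--\ref{item:JollyDragon} from this geometric picture.

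For the first stage, since $\delta < 1/2$ and $r_\delta \leq \delta \rho$, the diameter $2r_\delta$ of the ball is strictly smaller than $\rho$, so the defining condition of $\rho$ in \eqref{eq: Def_rho} yields that $\sB(x_0, r_\delta) \cap (\cup_{j=0}^m B_j)$ is connected. Because the barriers are pairwise disjoint, at most one $B_i$ can intersect the ball; and by the same token the intersection $B_i \cap \sB(x_0, r_\delta)$ must itself be a single connected smooth arc, for any two disjoint sub-arcs would form disconnected components of $\sB(x_0, r_\delta)\cap\cup_j B_j$.

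Having singled out this arc, I would parametrize it by arclength $\gamma:[0,L]\to \bbR^2$, pick a reference point $y_0 = \gamma(s_0)$ (for concreteness, the closest point of the arc to $x_0$), and take $\hatn \de \vec{n}_i(y_0)$ and $\mathfrak{c} \de \langle y_0, \hatn\rangle$. The curvature bound $\Vert \gamma''(s) \Vert \leq \kappa$ gives that the tangent direction rotates by at most $\kappa\lvert s-s_0\rvert$, and since $\kappa r_\delta \leq \delta < 1/2$ this rotation stays well below $\pi/2$ along any sub-arc of length $\leq r_\delta$. Combined with the connectedness from stage one, which rules out U-turns, this forces the arc endpoints to separate from $y_0$ at a definite rate along the tangent line at $y_0$, and hence $L \leq C r_\delta$ for an absolute constant $C$. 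With this arclength bound, \ref{item:IdleBat} is immediate: integrating the curvature estimate gives $\Vert \vec{n}_i(\gamma(s)) - \hatn \Vert \leq \kappa L \leq C\delta$, which is less than $2\delta$ after choosing $C$ small enough. For \ref{item:GhostlyQuip}, note that $\hatn$ is orthogonal to $\gamma'(s_0)$, so the component of $\gamma'(s)$ along $\hatn$ has magnitude at most $\sin(\kappa\lvert s-s_0\rvert) \leq \kappa\lvert s-s_0\rvert$; integrating once more yields $\lvert \langle \gamma(s) - y_0, \hatn\rangle\rvert \leq \kappa L^2/2 < 4\delta r_\delta$. Property \ref{item:JollyDragon} then follows because \ref{item:GhostlyQuip} confines the arc to the slab $\{\lvert\langle y, \hatn\rangle - \mathfrak{c}\rvert < 4\delta r_\delta\}$, so within the ball the two half-spaces $\langle\cdot,\hatn\rangle>\mathfrak{c}$ and $\langle\cdot,\hatn\rangle<\mathfrak{c}$ agree with the positive and negative sides of $B_i$ up to this slab, with the sign fixed by the convention that $\vec{n}_i$ points toward the bounded component.

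The main obstacle I anticipate is the arclength bound itself. A naive use of the curvature estimate alone does not exclude a long, curly arc that exits and reenters the ball after nearly half a turn, and hence one really must combine the curvature bound with the connectedness supplied by $\rho$ to exclude such pathologies. Once $L \leq C r_\delta$ is in hand the rest is routine calculus, and the only remaining care is to pin down the constants so that the specific thresholds $4\delta r_\delta$ and $2\delta$ in the statement come out cleanly.
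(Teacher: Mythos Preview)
The paper does not prove this lemma; it is quoted from the companion work \cite{vanwerde2024recovering} and used as a black box, so there is no in-paper argument to compare against. Your three-stage plan---connectedness via $\rho$, an arclength bound via the curvature constraint, then calculus along the arc---is the natural route and is in all likelihood what the cited proof does.

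There is, however, one concrete slip. You write that $\Vert\vec{n}_i(\gamma(s)) - \hatn\Vert \leq \kappa L \leq C\delta$ is ``less than $2\delta$ after choosing $C$ small enough.'' But $C$ is not yours to choose: the displacement argument you describe (project onto the tangent at an endpoint and use that the resulting advance is at least $\sin(\kappa s)/\kappa$ yet at most the diameter $2r_\delta$) yields $\kappa L < \arcsin(2\kappa r_\delta) \leq \arcsin(2\delta)$, and since $\arcsin x > x$ for $x>0$ this is strictly \emph{larger} than $2\delta$. So with the bound $|s - s_0| \leq L$ that your estimates implicitly use, property \ref{item:IdleBat} does not follow. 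The remedy is to take $y_0$ as the arclength \emph{midpoint} of the arc rather than the point closest to $x_0$; then $|s - s_0| \leq L/2$ and one gets $\kappa|s - s_0| \leq \arcsin(2\delta)/2 < 2\delta$ (using $\arcsin x < 2x$ on $(0,1]$), while the bound for \ref{item:GhostlyQuip} sharpens to $\kappa L^2/8 \leq (\pi^2/8)\,\delta r_\delta < 4\delta r_\delta$. Your closing remark that pinning down the constants is the remaining care is exactly right; this is where it bites, and the choice of reference point is what makes the stated thresholds $2\delta$ and $4\delta r_\delta$ actually come out.
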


From here on, we adopt the assumptions and notation of Lemma \ref{lem: W_si_nice} and assume that the event described in the probability on the left-hand side of \eqref{eq:SlowIce} occurs.
In particular, we fix $x_0, y\in\cD(\delta)$ with $\Vert x_0 - y \Vert\leq (4/5) r_\delta$ and let $i\geq 1$ be the index of the barrier $B_i$ which separates $x_0$ and $y$.
Let it be understood that we take $X_0 = x_0$.
Our goal in the subsequent Lemmas \ref{lem: StayInBall} to \ref{lem: NotAgain} is to deduce properties that follow from the event in \eqref{eq:SlowIce}.

\begin{lemma}[Stay in ball if local time is controlled]\label{lem: StayInBall}
	Adopt the setting of Lemma \ref{lem: W_si_nice} and assume that the event from \eqref{eq:SlowIce} occurs.
	Then, for every $t\leq  r_\delta^2$ it holds that $X_t\in \sB(x_0, r_\delta)$ if $\Lc{i}_{t} < (1/5 -\gamma)r_\delta$.
\end{lemma}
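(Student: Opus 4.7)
The plan is to argue by contradiction, exploiting continuity of $X_t$ together with the SDE of Definition \ref{def: ReflectedBrownianMotion} to express the displacement $X_t - x_0$ as $W_t$ plus a reflection term whose norm is bounded by a single local time. The key enabling fact will be Lemma \ref{lem: BarrierLocallyStraight}, which rules out any barrier other than $B_i$ from interfering at the scale $r_\delta$.

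First, I would introduce the exit time $\tau \de \inf\{r\geq 0: X_r \notin \sB(x_0, r_\delta) \}$ and suppose, for contradiction, that there exists $t\leq r_\delta^2$ with $X_t \notin \sB(x_0,r_\delta)$ and $\Lc{i}_t < (1/5 - \gamma)r_\delta$. Then $\tau \leq t \leq r_\delta^2$, $\Vert X_\tau - x_0 \Vert = r_\delta$ by continuity, and the monotonicity of $\Lc{i}_\cdot$ gives $\Lc{i}_\tau \leq \Lc{i}_t < (1/5-\gamma)r_\delta$.

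Next, I would restrict attention to what happens inside $\sB(x_0,r_\delta)$. By Lemma \ref{lem: BarrierLocallyStraight}, at most one barrier intersects $\sB(x_0, r_\delta)$; since $x_0$ and $y$ lie on different sides of $B_i$ and $\Vert x_0 - y\Vert \leq (4/5)r_\delta$, this barrier must be $B_i$. Consequently, for every $r \leq \tau$ the indicator $\mathbf{1}\{X_r \in B_j\}$ vanishes for all $j\neq i$, so integrating property~\ref{item: Def_ReflectedBrownianMotion_i} of Definition \ref{def: ReflectedBrownianMotion} yields
\begin{align}
	X_\tau - x_0 = W_\tau + \int_0^\tau s_i(\Lc{i}_r) \vec{n}_i(X_r) \mathbf{1}\{X_r\in B_i \} \intd \Lc{i}_r. \nonumber
\end{align}
Taking norms and using that $s_i \in \{\pm 1\}$ while $\vec{n}_i$ is a unit vector, the reflection integral contributes at most $\Lc{i}_\tau$, so $\Vert X_\tau - x_0 \Vert \leq \Vert W_\tau \Vert + \Lc{i}_\tau$.

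Finally, I would invoke the event from \eqref{eq:SlowIce}. The deviation bound $\Vert W_\tau - (\tau/r_\delta^2)(y-x_0)\Vert \leq \gamma r_\delta$ together with $\tau\leq r_\delta^2$ and $\Vert y - x_0\Vert \leq (4/5)r_\delta$ gives $\Vert W_\tau \Vert \leq (4/5)r_\delta + \gamma r_\delta$. Combined with the local time bound this produces
\begin{align}
	\Vert X_\tau - x_0 \Vert < \tfrac{4}{5}r_\delta + \gamma r_\delta + \bigl(\tfrac{1}{5} - \gamma\bigr)r_\delta = r_\delta,\nonumber
\end{align}
contradicting $\Vert X_\tau - x_0 \Vert = r_\delta$. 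The main obstacle is the bookkeeping step identifying $B_i$ as the unique barrier in $\sB(x_0,r_\delta)$; once that is in place, the argument is an application of the triangle inequality to the SDE.
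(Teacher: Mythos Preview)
Your proposal is correct and follows essentially the same route as the paper: a contradiction argument via the first exit time, Lemma \ref{lem: BarrierLocallyStraight} to isolate $B_i$ as the only relevant barrier inside $\sB(x_0,r_\delta)$, the SDE bound $\Vert X_\tau - x_0\Vert \leq \Vert W_\tau\Vert + \Lc{i}_\tau$, and then the deviation estimate from \eqref{eq:SlowIce}. Your justification that the intersecting barrier must be $B_i$ (because $x_0,y$ lie on different sides and $\Vert x_0-y\Vert\leq (4/5)r_\delta$) is slightly more explicit than the paper's, but the argument is otherwise identical.
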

\begin{proof}
	If the claim was false, then we could find some minimal $t_* \leq r_\delta^2$ with $\Lc{i}_{t_*} < (1/5 -\gamma)r_\delta$ but $X_{t_*} \not\in \sB(x_0,r_\delta)$.
	Then, for every $t< t_*$ we have $X_t\in \sB(x_0, r_\delta)$ and
	Lemma \ref{lem: BarrierLocallyStraight} yields that $B_i$ is the only barrier which intersects $\sB(x_0, r_\delta)$.
	Hence, the stochastic differential equation from Definition \ref{def: ReflectedBrownianMotion} implies that
	\begin{align}
		\Vert X_t - x_0 \Vert \leq \Vert W_t \Vert +  \Lc{i}_t \text{ for every }t < t_*. \label{eq:GladToy}
	\end{align}
	Here, using \eqref{eq:SlowIce} and the assumption that $\Vert x_0 - y \Vert \leq (4/5) r_\delta$,
	\begin{align}
		\Vert W_t \Vert \leq  \Vert  W_t - (t/ r_\delta^2) (x_0 - y) \Vert + \Vert (t/ r_\delta^2) (x_0 - y) \Vert \leq  \gamma r_\delta + (4/5) r_\delta.  \label{eq:OddKing}
	\end{align}
	The combination of \eqref{eq:GladToy} and \eqref{eq:OddKing} with continuity of the processes involved implies that $\Vert X_{t_*} - x_0 \Vert \leq \gamma r_\delta + (4/5)r_{\delta}+  \Lc{i}_{t_*}$.
	Using that $\Lc{i}_{t_*} < (1/5 - \gamma)r_\delta$ now yields that $X_{t_*} \in \sB(x_0,r_\delta)$, a contradiction.
	This concludes the proof.
\end{proof}

\begin{lemma}[Barrier is crossed at least once]\label{lem: AtLeastOnce}
	Adopt the setting of Lemma \ref{lem: W_si_nice} and assume that the event from \eqref{eq:SlowIce} occurs.
	Additionally, assume that $\alpha < 1/5 - \gamma$.
	Then, there exists some $t \leq  r_\delta^2$ such that $s_i(\Lc{i}_t) \neq s_i(0)$.
\end{lemma}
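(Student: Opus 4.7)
The plan is to argue by contradiction: assume that $s_i(\Lc{i}_t) = s_i(0)$ for every $t \in [0, r_\delta^2]$. Since $s_i$ first jumps at time $\cE_1 < \alpha r_\delta$, this assumption forces $\Lc{i}_t < \alpha r_\delta < (1/5-\gamma) r_\delta$ throughout, so Lemma \ref{lem: StayInBall} confines the trajectory to $\sB(x_0, r_\delta)$ and Definition \ref{def: ReflectedBrownianMotion}\ref{item: Def_ReflectedBrownianMotion_iii} keeps $X_t$ on the same side of $B_i$ as $x_0$. Without loss of generality take $x_0$ on the negative side, so that $s_i(0) = -1$.

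I will project the SDE from Definition \ref{def: ReflectedBrownianMotion}\ref{item: Def_ReflectedBrownianMotion_i} onto an approximate outward normal $\hatn$ to $B_i$. Because $y$ and its nearest barrier point can lie just outside $\sB(x_0, r_\delta)$, it is convenient to invoke Lemma \ref{lem: BarrierLocallyStraight} at $x_0$ with the doubled parameter $2\delta$ (permissible for $\delta$ small), yielding constants $\hatn$ and $\mathfrak{c}$ valid throughout $\sB(x_0, 2 r_\delta)$. Only $B_i$ intersects that ball, and $X_t$ stays in the smaller ball $\sB(x_0, r_\delta)$, so the reflection integral reduces to a single $B_i$-term carrying sign $s_i = -1$. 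Using $\langle \vec{n}_i(\cdot), \hatn \rangle \leq 1$ and the straight-line control of $W_t$ from \eqref{eq:SlowIce},
\[
    \langle X_{r_\delta^2}, \hatn \rangle \;\geq\; \langle x_0 + W_{r_\delta^2}, \hatn \rangle - \Lc{i}_{r_\delta^2} \;>\; \langle y, \hatn \rangle - (\alpha+\gamma) r_\delta.
\]

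The remaining step is purely geometric: show $\langle y, \hatn \rangle \gtrsim \mathfrak{c} + r_\delta/5$ while $\langle X_{r_\delta^2}, \hatn \rangle \lesssim \mathfrak{c}$. Let $z^* \in B_i$ be the closest point to $y$; since the segment from $x_0$ to $y$ crosses $B_i$ inside $\sB(x_0, r_\delta)$, we have $\Vert y - z^* \Vert \leq (4/5) r_\delta$, whence $z^* \in \sB(x_0, 2 r_\delta)$ and Lemma \ref{lem: BarrierLocallyStraight} applies at $z^*$. Combining the identity $y - z^* = \Vert y - z^* \Vert \vec{n}_i(z^*)$ with the distance bound $\Vert y - z^* \Vert \geq r_\delta/5$ from $y \in \cD(\delta)$, the normal closeness $\langle \vec{n}_i(z^*), \hatn \rangle \geq 1 - O(\delta^2)$, and the strip bound $\langle z^*, \hatn \rangle \geq \mathfrak{c} - O(\delta r_\delta)$ yields $\langle y, \hatn \rangle \geq \mathfrak{c} + r_\delta/5 - O(\delta r_\delta)$. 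On the other hand, $X_{r_\delta^2}$ lies on the negative side within $\sB(x_0, 2 r_\delta)$, so Lemma \ref{lem: BarrierLocallyStraight}\ref{item:JollyDragon} forces $\langle X_{r_\delta^2}, \hatn \rangle < \mathfrak{c} + O(\delta r_\delta)$. Chaining all three inequalities produces $\alpha + \gamma \geq 1/5 - O(\delta)$, contradicting $\alpha < 1/5 - \gamma$ once $\delta$ is sufficiently small. The main technical obstacle is exactly this scale mismatch—the nearest barrier point to $y$ falls outside $\sB(x_0, r_\delta)$—handled here by applying Lemma \ref{lem: BarrierLocallyStraight} at the doubled scale, an implicit smallness condition on $\delta$ that is harmless since $\delta$ is ultimately fixed to a small absolute constant.
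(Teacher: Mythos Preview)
Your contradiction argument is correct, but it is considerably more elaborate than the paper's. Both proofs begin identically: from $s_i(\Lc{i}_t)=s_i(0)$ for all $t\le r_\delta^2$ one gets $\Lc{i}_{r_\delta^2}<\alpha r_\delta<(1/5-\gamma)r_\delta$, hence $X_t\in\sB(x_0,r_\delta)$ by Lemma~\ref{lem: StayInBall}, and $X_{r_\delta^2}$ stays on the same side of $B_i$ as $x_0$. From there, however, the paper does not project onto $\hatn$ at all. It simply notes that the SDE gives
\[
\Vert X_{r_\delta^2}-x_0-W_{r_\delta^2}\Vert\le \Lc{i}_{r_\delta^2}<(1/5-\gamma)r_\delta,
\]
while the triangle inequality together with $\Vert W_{r_\delta^2}-(y-x_0)\Vert\le\gamma r_\delta$ gives
\[
\Vert X_{r_\delta^2}-x_0-W_{r_\delta^2}\Vert\ge \Vert X_{r_\delta^2}-y\Vert-\gamma r_\delta\ge (1/5-\gamma)r_\delta,
\]
the last step because $y\in\cD(\delta)$ is at distance $\ge r_\delta/5$ from $B_i$ and $X_{r_\delta^2}$ lies on the opposite side. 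This yields the contradiction \emph{exactly}, with no $O(\delta)$ error terms.

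Your route---projecting onto $\hatn$, invoking Lemma~\ref{lem: BarrierLocallyStraight} at the doubled scale $2\delta$ to cover the foot $z^*$, and reconstructing $\langle y,\hatn\rangle$ from $z^*$---is valid, but it buys nothing and costs something: you end with $\alpha+\gamma\ge 1/5-O(\delta)$ rather than the sharp $\alpha+\gamma\ge 1/5$, so your argument only proves the lemma under an additional smallness assumption on $\delta$ (which the statement, inheriting only $\delta<1/2$ from Lemma~\ref{lem: W_si_nice}, does not impose). As you note, this is harmless downstream, but the paper's Euclidean-norm argument avoids the detour, the doubled scale, and the slack entirely.
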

\begin{proof}
	Suppose not, meaning that $s_i(\Lc{i}_t) = s_i(0)$ for every $t \leq  r_\delta^2$.
	Then, since $\cE_1 \leq  \alpha_1 r_\delta$, it follows that $\Lc{i}_{ r_\delta^2} \leq  \alpha r_\delta < (1/5 - \gamma)r_\delta$.
	Lemma \ref{lem: StayInBall} now implies that $X_t \in \sB(x_0,  r_\delta)$ for every $t\leq  r_\delta^2$ and consequently $B_i$ is the only barrier which intersects $\sB(x_0, r_\delta)$ by Lemma \ref{lem: BarrierLocallyStraight}.
	The stochastic differential equation in Definition \ref{def: ReflectedBrownianMotion} hence yields
	\begin{align}
		\Vert X_{  r_\delta^2} - x_0 - W_{  r_\delta^2} \Vert \leq\Lc{i}_{  r_\delta^2}  \leq  \alpha r_\delta < (1/5 - \gamma)r_\delta.\label{eq:GladSax}
	\end{align}
	On the other hand, using the triangle inequality as well as \eqref{eq:SlowIce},
	\begin{align}
		\Vert X_{  r_\delta^2} - x_0 - W_{  r_\delta^2} \Vert \geq \Vert X_{  r_\delta^2}- y \Vert - \Vert W_{ r_\delta^2} - (y -x_0) \Vert \geq \Vert X_{  r_\delta^2}- y \Vert -  \gamma r_\delta.\label{eq:DarkGoose2}
	\end{align}
	The assumption that $s_i(\Lc{i}_t) = s_i(0)$ implies that $X_{ r_\delta}^2$ lies on a different side of $B_i$ than $y$.
	(Recall that Lemma \ref{lem: W_si_nice} assumed that $x_0$ and $y$ lie on different sides.)
	Consequently, since $y \in \cD(\delta)$ implies that $y$ is at distance $\geq  r_\delta/5$ from $B_i$, we have $
		\Vert X_{ r_\delta^2 } - y\Vert \geq  r_\delta / 5 $.
	Combine this with \eqref{eq:DarkGoose2} to find that $\Vert X_{  r_\delta^2} - x_0 - W_{  r_\delta^2} \Vert \geq (1/5 - \gamma) r_{\delta}$, contradicting \eqref{eq:GladSax}.
\end{proof}

\begin{lemma}[Barrier is not crossed a second time]\label{lem: NotAgain}
	Adopt the notion and assumptions of Lemma \ref{lem: W_si_nice} and assume that the event from \eqref{eq:SlowIce} occurs.
	Additionally, assume that
	\begin{align}
		\alpha + \beta < 1/5 - \gamma\ \text{ and }\ (1-2\delta)\beta -2\gamma - 12 \delta \geq 4\delta.\label{eq:ZombieGum}
	\end{align}
	Then, it holds with $T\de \inf\{t \geq 0: s_i(\Lc{i}_t) \neq s_i(0) \}$ that
	$
		\Lc{i}_{ r_\delta^2} \leq \Lc{i}_{T} +  \beta r_\delta.
	$
	In particular,
	\begin{align}
		s_i(\Lc{i}_t) = s_i(\Lc{i}_{T}) \text{ for every } t\in [T,  r_\delta^2].
	\end{align}
\end{lemma}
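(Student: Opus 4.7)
The approach is a contradiction argument: supposing $\Lc{i}_{r_\delta^2}-\Lc{i}_T>\beta r_\delta$, I would introduce the stopping time $t^*\de\inf\{t\geq T:\Lc{i}_t-\Lc{i}_T\geq\beta r_\delta\}$ and derive a contradiction by examining the process on $[T,t^*]$. By continuity of $\Lc{i}$ one has $t^*\leq r_\delta^2$ and $\Lc{i}_{t^*}-\Lc{i}_T=\beta r_\delta$. Two immediate bootstraps follow from the hypotheses: since $\Lc{i}_t\leq\alpha r_\delta+\beta r_\delta<(1/5-\gamma)r_\delta$ on $[T,t^*]$, Lemma~\ref{lem: StayInBall} confines $X_t$ to $\sB(x_0,r_\delta)$ so that Lemma~\ref{lem: BarrierLocallyStraight} supplies a unit vector $\hatn$ and scalar $\fc$ approximating $B_i$; and since $\cE_2>\beta r_\delta$, the sign $s_i(\Lc{i}_\cdot)$ cannot flip again on $[T,t^*]$. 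After renaming, one may take $s_i(0)=-1$, so that $y$ is on the positive side and $s_i(\Lc{i}_t)=+1$ throughout $[T,t^*]$.

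The contradiction will come from comparing two estimates of $\langle X_{t^*}-X_T,\hatn\rangle$. By continuity of $X$ combined with property~\ref{item: Def_ReflectedBrownianMotion_iii} of Definition~\ref{def: ReflectedBrownianMotion}, both $X_T$ and $X_{t^*}$ lie on $B_i\cap\sB(x_0,r_\delta)$ (the former because $X$ must cross the barrier at $T$, the latter because the local time is strictly increasing just before $t^*$ and $B_i$ is closed), so part~\ref{item:GhostlyQuip} of Lemma~\ref{lem: BarrierLocallyStraight} gives $|\langle X_{t^*}-X_T,\hatn\rangle|<8\delta r_\delta$. On the other hand, projecting the stochastic differential equation of Definition~\ref{def: ReflectedBrownianMotion} onto $\hatn$ and using part~\ref{item:IdleBat} to replace $\langle\vec{n}_i(X_r),\hatn\rangle$ by a factor $\geq 1-2\delta$ yields
\begin{align}
\langle X_{t^*}-X_T,\hatn\rangle\geq\langle W_{t^*}-W_T,\hatn\rangle+(1-2\delta)\beta r_\delta.\nonumber
\end{align}

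It remains to bound the Wiener contribution from below. The tube estimate in \eqref{eq:SlowIce} at times $T$ and $t^*$ gives $\langle W_{t^*}-W_T,\hatn\rangle\geq((t^*-T)/r_\delta^2)\langle y-x_0,\hatn\rangle-2\gamma r_\delta$, and part~\ref{item:JollyDragon} of Lemma~\ref{lem: BarrierLocallyStraight} applied to $y$ (positive side) and $x_0$ (negative side) yields $\langle y-x_0,\hatn\rangle>-8\delta r_\delta$; since $t^*-T\leq r_\delta^2$ this gives $\langle W_{t^*}-W_T,\hatn\rangle\geq-2\gamma r_\delta-8\delta r_\delta$. Combining the three inequalities collapses to $(1-2\delta)\beta<2\gamma+16\delta$, directly contradicting $(1-2\delta)\beta-2\gamma-12\delta\geq 4\delta$. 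The concluding \emph{in particular} statement is then automatic: once $\Lc{i}_{r_\delta^2}<\cE_1+\cE_2$, we have $\Lc{i}_t\in[\cE_1,\cE_1+\cE_2)$ for every $t\in[T,r_\delta^2]$, so $s_i(\Lc{i}_t)=s_i(\cE_1)=s_i(\Lc{i}_T)$.

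The main obstacle is assembling the geometric error contributions so that the hypothesis $(1-2\delta)\beta-2\gamma-12\delta\geq 4\delta$ matches the derived inequality with no slack to spare: two $4\delta r_\delta$ corrections appear at the barrier endpoints $X_T$ and $X_{t^*}$, the Wiener tube contributes $2\gamma r_\delta$, and an additional $8\delta r_\delta$ enters through the possibly negative projection $\langle y-x_0,\hatn\rangle$. Because the one-sided bound in part~\ref{item:JollyDragon} does not give a positive lower bound on $\langle y-x_0,\hatn\rangle$ of size comparable to $\Vert y-x_0\Vert$, one cannot exploit any effective drift of $W_t$ toward $y$; the argument succeeds precisely because the reflection displacement $(1-2\delta)\beta r_\delta$ dominates the worst-case Wiener contribution.
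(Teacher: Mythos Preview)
Your proposal is correct and follows essentially the same approach as the paper's proof: define the stopping time $t^*$, use Lemma~\ref{lem: StayInBall} to confine $X$ to $\sB(x_0,r_\delta)$, project the SDE onto $\hatn$, and combine the reflection contribution $(1-2\delta)\beta r_\delta$ with the Wiener bound $-(2\gamma+8\delta)r_\delta$ to contradict the barrier location given by Lemma~\ref{lem: BarrierLocallyStraight}. The only cosmetic differences are that the paper handles both signs at once via $s_i(\Lc{i}_T)\hatn$ instead of a WLOG, and it phrases the contradiction as ``$X_{t_*}\notin B_i$ yet local time increases just after $t_*$'' rather than your equivalent ``$X_{t^*}\in B_i$ yet $\langle X_{t^*}-X_T,\hatn\rangle$ is too large.''
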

\begin{proof}
	The bound on $\cE_2$ in \eqref{eq:SlowIce} implies that $s_i(\Lc{i}_t) = s_i(\Lc{i}_{T})$ for every $t\in [T,  r_\delta^2]$ with $\Lc{i}_t \leq \Lc{i}_T + \beta r_\delta$.
	It hence suffices to prove this bound on the local time.

	Suppose to the contrary that $t_* \de \inf\{t\geq T: \Lc{i}_t > \Lc{i}_{T} +  \beta r_\delta\}$ is less than or equal to $ r_\delta^2$.
	We will derive a contradiction.
	Since $\Lc{i}_{t_*} - \Lc{i}_{T} =  \beta r_\delta$ by definition of $t_*$ and since $\Lc{i}_{T} \leq  \alpha r_\delta$ by the bound on $\cE_1$ in \eqref{eq:SlowIce},
	\begin{align}
		\Lc{i}_{t_*}= \Lc{i}_{T} + (\Lc{i}_{t_*} - \Lc{i}_{T}) = (\alpha + \beta)r_\delta < (1/5 - \gamma)r_\delta.\label{eq:WeepyGoose}
	\end{align}
	Lemma \ref{lem: StayInBall} then implies that $X_t\in \sB(x_0,  r_\delta)$ for every $t\leq t_*$ so that Lemma \ref{lem: BarrierLocallyStraight} implies that $B_i$ is the only barrier with which the process may interact.
	Hence, also using that $s_i(\Lc{i}_t) = s_i(\Lc{i}_T)$ for every $t\in [T,t_*)$ by definition of $t_*$ and the bound on $\cE_2$ in \eqref{eq:SlowIce}, the stochastic differential equation in Definition \ref{def: ReflectedBrownianMotion} yields that
	\begin{align}
		X_{t_*}  =X_T + (W_{t_*} - W_T) + s_i(\Lc{i}_T)\int_{T}^{t_*}\bb1\{X_t \in B_i \} \vec{n}_i(X_t)\, \intd \Lc{i}_t.  \label{eq:GreenOtter}
	\end{align}
	We next study the right-hand side of \eqref{eq:GreenOtter} with as main goal to prove that $X_{t_*}\not\in B_i$.

	Note that $T$ being the time when $s_i(\Lc{i}_T)$ changes sign necessitates that $X_{T} \in B_i$.
	Hence, by item \ref{item:GhostlyQuip} from Lemma \ref{lem: BarrierLocallyStraight}, with $\mathfrak{c}$ and $\hatn$ as in that lemma,
	\begin{align}
		\langle X_T, s_i(\Lc{i}_T)\hat{n} \rangle \geq  s_i(\Lc{i}_T) \mathfrak{c} - 4\delta r_\delta.
	\end{align}
	Further, recall that $X_t\in \sB(x_0,r_\delta)$ for $t\leq t_*$ so that item \ref{item:IdleBat} in Lemma \ref{lem: BarrierLocallyStraight} implies that $\langle \vec{n}_i(X_t) , \hatn \rangle \geq 1- 2\delta$ for every $t\leq t_*$ with $X_t \in B_i$.
	Consequently,
	\begin{align}
		\Bigl\langle \int_{T}^{t_*}\bb1\{X_t \in B_i \} \vec{n}_i(X_t)\, \intd \Lc{i}_t, \hatn \Bigr\rangle \geq (1 - 2\delta)\bigl(\Lc{i}_{t_*} - \Lc{i}_T\bigr) \geq (1-2\delta)\beta r_\delta.
	\end{align}
	Recall from the setup of Lemma \ref{lem: W_si_nice} that $y$ lies on a different side of $B_i$ than $x_0$ and that $\Vert x-y \Vert \leq (4/5)r_\delta$ so that $y\in \sB(x_0, r_\delta)$.
	item \ref{item:JollyDragon} in Lemma \ref{lem: BarrierLocallyStraight} hence yields that
	$
		\langle x_0, s_i(0) \hatn \rangle > s_i(0)\mathfrak{c}  - 4\delta r_\delta
	$
	and
	$
		\langle y, s_i(0) \hatn \rangle < s_i(0)\mathfrak{c} + 4\delta r_\delta.
	$
	Consequently, recalling from \eqref{eq:SlowIce} that $\Vert W_t - (t/r_\delta^2)(y - x_0) \Vert \leq \gamma r_\delta$ for every $t\leq r_\delta^2$ and using that $s_i(\Lc{i}_T) = -s_i(0)$,
	\begin{align}
		\langle W_{t_*} - W_T,  s_i(\Lc{i}_T) \hatn \rangle & \geq -2\gamma r_\delta -  \Bigl(\frac{t_*}{r_{\delta}^2} - \frac{T}{r_{\delta}^2} \Bigr)\langle y- x_0,  s_i(0) \hatn \rangle \geq  -(2\gamma + 8 \delta)r_\delta.\label{eq:VagueBee}
	\end{align}
	The second inequality here also used that $T < t_*  \leq r_\delta^2$.

	Combine \eqref{eq:GreenOtter}--\eqref{eq:VagueBee} and use the assumption in \eqref{eq:ZombieGum} to find that
	\begin{align}
		\langle X_{t_*}, s_i(\Lc{i}_T) \rangle
		\geq s_i(\Lc{i}_T) \mathfrak{c} + \bigl((1-2\delta)\beta -2\gamma - 12 \delta  \bigr)r_\delta
		\geq s_i(\Lc{i}_T) \mathfrak{c} + 4 \delta r_\delta.
	\end{align}
	Then, item \ref{item:GhostlyQuip} in Lemma \ref{lem: BarrierLocallyStraight} implies that $X_{t_*} \not\in B_i$.
	However, the definition that $t_* = \inf\{t\geq T: \Lc{i}_t > \Lc{i}_T + \beta r_\delta \}$ implies that $\Lc{i}_{t_*} = \Lc{i}_T + \beta r_\delta$ and that for every small $\varepsilon >0$ it holds that $\Lc{i}_{t_* + \varepsilon} > \Lc{i}_T + \beta r_\delta$.
	This yields a contradiction since Definition \ref{def: ReflectedBrownianMotion} states that the local time can only increase when process is on the barrier.
	This concludes the proof.
\end{proof}

Recall that $\cZ(\delta,y)$ denotes the set of points $z\in \cD(\delta/2)$ at distance $\leq (4/5) r_\delta$ from $y$ and on the same side of every barrier as $y$.
We are now prepared to show that $X_t$ lands in $\cZ(\delta, y)$ with nontrivial probability:
\begin{lemma}
	\label{lem: ReductionSameSide}
	There exist absolute constants $\delta_0,c>0$ such that the following holds for every $\delta \leq \delta_0$.
	Consider $x_0,y \in \cD(\delta)$ with $\Vert x-y \Vert \leq (4/5) r_\delta$ such that $x_0$ and $y$ lie on different sides of some barrier $B_i$ with $i\in\{1,\ldots,k \}$.
	Then,
	\begin{align}
		\bbP(X_{ r_\delta^2} \in \cZ(\delta,y) \mid X_0 = x_0) \geq c  r_\delta \lambda_{\min}.
	\end{align}
\end{lemma}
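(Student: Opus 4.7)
The plan is to deterministically derive the conclusion from the ``good event'' $\cG$ described in \eqref{eq:SlowIce}, and then read the probability bound directly off Lemma \ref{lem: W_si_nice}. Since that lemma already supplies $\bbP(\cG) \geq c\, r_\delta \lambda_{\min}$ for some absolute $c>0$, the real task is to choose the auxiliary parameters $\alpha, \beta, \gamma$ and $\delta_0$ so that $\cG \subseteq \{X_{r_\delta^2} \in \cZ(\delta, y)\}$.

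First I would fix absolute constants $\alpha, \beta, \gamma > 0$ and $\delta_0 \in (0, 1/2)$ so that, for every $\delta \leq \delta_0$, the three constraints of Lemmas \ref{lem: AtLeastOnce} and \ref{lem: NotAgain} hold --- namely $\alpha < 1/5 - \gamma$, $\alpha + \beta < 1/5 - \gamma$, and $(1-2\delta)\beta - 2\gamma - 12\delta \geq 4\delta$ --- together with the additional quantitative requirement $\alpha + \beta + \gamma \leq 1/10$. The last inequality is the only new restriction; it is what will ensure that $X_{r_\delta^2}$ ends up close enough to $y$ to sit in $\cD(\delta/2)$. All four conditions are simultaneously satisfiable since $\beta$ only needs to dominate $2\gamma$ up to a small $\delta$-correction, while $\alpha$ and $\gamma$ may be taken arbitrarily small.

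Once $\cG$ occurs, Lemma \ref{lem: NotAgain} bounds $\Lc{i}_{r_\delta^2} \leq (\alpha + \beta) r_\delta < (1/5 - \gamma) r_\delta$, so Lemma \ref{lem: StayInBall} confines $X_t$ to $\sB(x_0, r_\delta)$ for the whole interval $[0, r_\delta^2]$. By Lemma \ref{lem: BarrierLocallyStraight} this ball is met by no barrier other than $B_i$, so $X_t$ never interacts with any $B_j$ with $j \neq i$. Lemmas \ref{lem: AtLeastOnce} and \ref{lem: NotAgain} together assert that $s_i(\Lc{i}_{\cdot})$ flips exactly once on $[0, r_\delta^2]$, so by Definition \ref{def: ReflectedBrownianMotion}\ref{item: Def_ReflectedBrownianMotion_iii} the endpoint $X_{r_\delta^2}$ lies on the side of $B_i$ containing $y$. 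For every $B_j$ with $j \neq i$, both $x_0$ and $y$ already lie on the same side (they both sit in the connected set $\sB(x_0, r_\delta)$, which $B_j$ does not touch), so $X_{r_\delta^2}$ is in fact on the same side as $y$ of every single barrier.

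For the remaining distance estimate, integrating the SDE in Definition \ref{def: ReflectedBrownianMotion}\ref{item: Def_ReflectedBrownianMotion_i} and using that only $B_i$ contributes yields
\[
\|X_{r_\delta^2} - y\| \leq \|W_{r_\delta^2} - (y - x_0)\| + \Lc{i}_{r_\delta^2} \leq (\gamma + \alpha + \beta)\, r_\delta \leq r_\delta/10,
\]
which is certainly below $(4/5) r_\delta$. Since $y \in \cD(\delta)$ lies at distance $\geq r_\delta/5$ from every barrier, the triangle inequality then gives $\mathrm{dist}(X_{r_\delta^2}, B_j) \geq r_\delta/10 = r_{\delta/2}/5$ for every $j$, so $X_{r_\delta^2} \in \cD(\delta/2)$ and hence in $\cZ(\delta, y)$. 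I expect no real obstacle here --- the substantive probabilistic and geometric work has already been absorbed into the preceding lemmas, and what remains is just the bookkeeping of picking $\alpha, \beta, \gamma, \delta_0$ so that the four strict inequalities above all hold at once.
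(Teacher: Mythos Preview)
Your proposal is correct and follows essentially the same route as the paper: fix $\alpha,\beta,\gamma,\delta_0$ satisfying the constraints of Lemmas \ref{lem: AtLeastOnce} and \ref{lem: NotAgain} together with the sharper bound $\alpha+\beta+\gamma\leq 1/10$ (the paper writes this as $\alpha+\beta<1/10-\gamma$), then on the good event from \eqref{eq:SlowIce} deduce that $X_t$ stays in $\sB(x_0,r_\delta)$, crosses $B_i$ exactly once, and lands within $r_\delta/10$ of $y$. The bookkeeping and the final membership in $\cD(\delta/2)$ via $r_{\delta/2}/5=r_\delta/10$ are handled just as in the paper.
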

\begin{proof}
	Let $\alpha, \beta,\gamma, \delta_0 >0$ be such that the constraints in \eqref{eq:ZombieGum} are satisfied for every $\delta \leq \delta_0$ and $\alpha + \beta < 1/10 - \gamma$.
	Such values exist since one can first fix $\beta$ at some value $<1/15$ and subsequently take all other parameters sufficiently small.
	Then, by Lemma \ref{lem: W_si_nice}, it suffices to show that $X_t \in \cZ(\delta,y)$ whenever the event from \eqref{eq:SlowIce} occurs.

	Combining Lemmas \ref{lem: AtLeastOnce} and \ref{lem: NotAgain} yields that $X_t$ crossed $B_i$ exactly once.
	Hence, $X_{ r_\delta^2}$ is on the same side of $B_i$ as $y$.
	Further, since $\Lc{i}_T \leq  \alpha r_\delta$ by the bound on $\cE_2$ in \eqref{eq:SlowIce} and $\Lc{i}_{r_\delta^2} - \Lc{i}_T \leq \beta r_\delta$ by Lemma \ref{lem: NotAgain},
	\begin{align}
		\Lc{i}_{ r_\delta^2} \leq  (\alpha  + \beta)r_\delta  < (1/10 - \gamma)r_\delta < (1/5 - \gamma)r_\delta .\label{eq:ZombieBeetle}
	\end{align}
	Lemma \ref{lem: StayInBall} then yields that $X_{t}$ is in $\sB(x_0,  r_\delta)$ for every $t\leq  r_\delta^2$.
	Hence, since $B_i$ is the only barrier which intersects $\sB(x_0,  r_\delta)$ by Lemma \ref{lem: BarrierLocallyStraight}, it follows that $X_{ r_\delta^2}$ is on the same side of every barrier as $y$, also for barriers $B_j$ with $j\neq i$.
	Further, by the triangle inequality,
	\begin{align}
		\Vert X_{ r_\delta^2} -y \Vert \leq \Vert X_{ r_\delta^2} - (x_0 + W_{ r_\delta^2}) \Vert + \Vert (x_0 + W_{ r_\delta^2}) - y \Vert.
	\end{align}
	Here, using the stochastic differential equation from Definition \ref{def: ReflectedBrownianMotion} together with the second-to-last inequality in \eqref{eq:ZombieBeetle}, we have
	$
		\Vert X_{ r_\delta^2} - (x_0 + W_{ r_\delta^2}) \Vert  \leq \Lc{i}_{ r_\delta^2} \leq (1/10 - \gamma)r_\delta.
	$
	Further, \eqref{eq:SlowIce} yields $\Vert (x_0 + W_{ r_\delta^2}) - y \Vert \leq  \gamma r_\delta$.
	Hence, we have that
	\begin{align}
		\Vert X_{ r_\delta^2} - y \Vert\leq (1/10) r_\delta \leq (4/5)r_\delta.\label{eq:UnripePaint}
	\end{align}
	Finally, recall from \eqref{eq: Def_Deps} that $y\in \cD(\delta)$ means that $y$ is at distance $\geq  r_\delta/5$ from all barriers.
	It hence follows from the first inequality in \eqref{eq:UnripePaint} that $X_{ r_\delta^2}$ is at distance $\geq  r_\delta/10$ from all barriers.
	This means that $X_{ r_\delta^2} \in \cD(\delta/2)$.

	We have shown that the event in \eqref{eq:SlowIce} implies that $X_{r_\delta^2}$ is a point in $\cD(\delta/2)$ at distance $\leq (4/5)r_\delta$ from $y$ and on the same side of every barrier.
	Thus, $X_{r_\delta^2} \in \cZ(\delta,y)$ as desired.
\end{proof}

\begin{figure}[hbt]
	\centering
	\includegraphics[width = 0.85\textwidth]{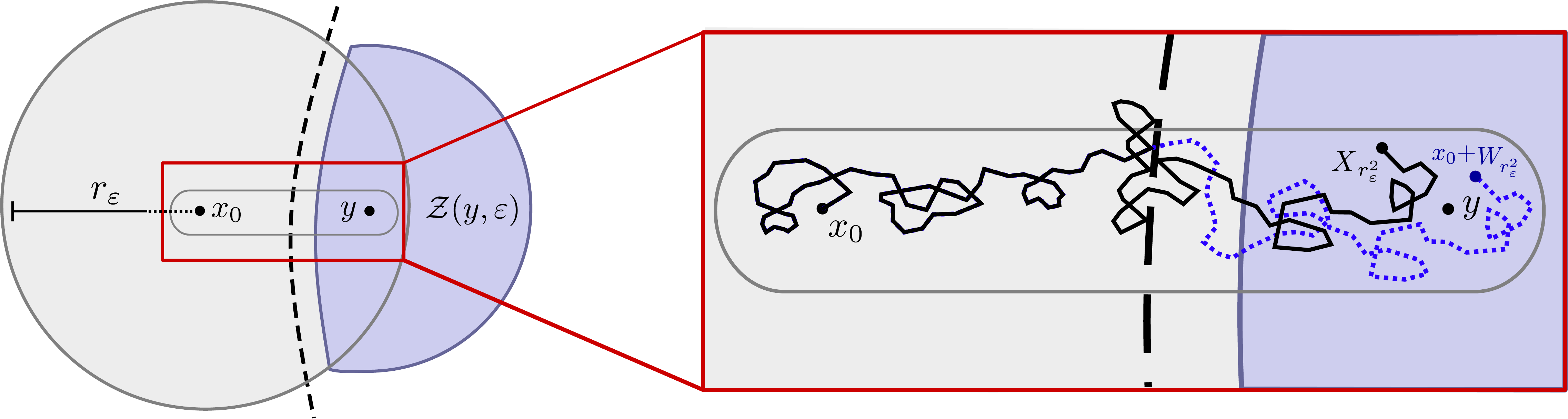}
	\caption{
		Visualization of Lemma \ref{lem: ReductionSameSide} and its proof idea.
		Using Lemma \ref{lem: PillShaped}, we can ensure that $X_t$ runs into the barrier with nontrivial probability.
		Then, also using that $\lambda_{\min}$ and $\lambda_{\max}$ control how quickly $s_i$ changes, we can estimate the probability that $X_t$ crosses the barrier exactly once.
		This allows proving that $X_{ r_\delta^2}$ lands in $\cZ(\delta,y)$ with nontrivial probability.
	}
	\label{fig: ForcedSmash}
\end{figure}

\subsubsection{Landing in a small neighborhood of \texorpdfstring{$y$}{y} starting from same side of barrier}
We here study the probability that $X_{t} \in \sB(y,\varepsilon)$ when $x_0,y \in \cD(\delta/2)$ are on the same side of every barrier.
The proof idea is outlined in the caption of Figure \ref{fig: AvoidBarrier}.
We start with a preliminary geometric lemma:
\begin{lemma}[Tube does not intersect barrier]\label{lem: StripIntersect}
	There exist absolute constants $\gamma_0,\delta_0 >0$ such that the following holds for every $\gamma \leq \gamma_0$ and $\delta \leq \delta_0$.
	Consider $x_0, y\in \cD(\delta/2)$ with $\Vert x_0 - y \Vert \leq (4/5) r_\delta$ and assume that $x_0$ and $y$ are on the same side of every barrier.
	Then, for every $i \geq 0$,
	\begin{align}
		B_i\cap \{z \in \bbR^2: \exists \alpha \in [0,1],\ \Vert  \alpha x_0 + (1-\alpha)y - z \Vert \leq   \gamma r_\delta \} = \emptyset. \label{eq:CrimsonRag}
	\end{align}
\end{lemma}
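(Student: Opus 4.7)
The plan is to argue by contradiction: assume there exist $i$, $\alpha \in [0,1]$, and $z \in B_i$ with $\Vert p - z \Vert \leq \gamma r_\delta$ where $p \de \alpha x_0 + (1-\alpha) y$, and derive a contradiction by exploiting the near-straightness of $B_i$ on the scale $r_\delta$. Intuitively, since $x_0, y$ are both on the same side of $B_i$ at distance $\geq r_\delta/10$, their convex combination $p$ must be noticeably on that side too, whereas $z \in B_i$ is essentially on the local linearization of $B_i$; this will force $\Vert p - z\Vert$ to be at least of order $r_\delta/10$ modulo error terms.

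To make this precise, the plan is to invoke Lemma \ref{lem: BarrierLocallyStraight} at the point $p$, but with the parameter $\delta$ replaced by $2\delta$ (permissible for $\delta < 1/4$). This produces a unit vector $\hatn$ and a scalar $\mathfrak{c}$ for which the bounds of Lemma \ref{lem: BarrierLocallyStraight} hold on the enlarged ball $\sB(p, 2 r_\delta)$, with the constants inflated by a factor of four. Doubling the radius is essential, because the closest points $z^*_{x_0}, z^*_y \in B_i$ to $x_0$ and $y$ can lie up to $(8/5 + \gamma) r_\delta$ from $p$; this may exceed $r_\delta$ in general but stays within $2 r_\delta$ for $\gamma \leq 2/5$. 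Thus all five points $x_0$, $y$, $z$, $z^*_{x_0}$, $z^*_y$ lie in the enlarged ball, and the reach estimate $r_\delta < 1/(2\kappa)$ lets us write $x_0 = z^*_{x_0} \pm t_{x_0} \vec{n}_i(z^*_{x_0})$ with $t_{x_0} \geq r_\delta/10$ (sign determined by the side of $B_i$ containing $x_0$), and analogously for $y$.

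The final step is a short computation. Combining $|\langle z^*_{x_0}, \hatn\rangle - \mathfrak{c}| < 16\delta r_\delta$ with $\langle \vec{n}_i(z^*_{x_0}), \hatn\rangle \geq 1 - 8\delta^2 \geq 1/2$ gives, in the positive-side case, $\langle x_0, \hatn\rangle \geq \mathfrak{c} + r_\delta(1/20 - 16\delta)$; the identical bound holds for $y$ and hence for $p$ by convexity. On the other hand, $z \in B_i \cap \sB(p, 2 r_\delta)$ yields $\langle z, \hatn\rangle \leq \mathfrak{c} + 16\delta r_\delta$, while $\Vert p - z\Vert \leq \gamma r_\delta$ yields $\langle p - z, \hatn\rangle \leq \gamma r_\delta$. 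Chaining produces $\gamma + 32\delta \geq 1/20$, contradicting any choice of $\gamma_0, \delta_0$ with $\gamma_0 + 32 \delta_0 < 1/20$ (for instance $\gamma_0 = 1/40$ and $\delta_0 = 1/2000$). The negative-side case is symmetric upon flipping signs, and the outer boundary $B_0$ is covered without change since $D$ lies on its positive side. I anticipate the main subtlety to be precisely the mismatch between the radius of the naive ball $\sB(p, r_\delta)$ and the locations of $z^*_{x_0}, z^*_y$: centering at $p$, $x_0$, or $y$ with the original $\delta$ all fail, and the detour through Lemma \ref{lem: BarrierLocallyStraight} at scale $2\delta$ is the mechanism for simultaneously capturing both endpoints and both closest points.
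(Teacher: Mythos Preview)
Your argument is correct, and it differs in its technical mechanism from the paper's proof. Both are contradiction arguments that project onto the approximate normal $\hatn$ from Lemma~\ref{lem: BarrierLocallyStraight}, but they extract the lower bound on $\langle p,\hatn\rangle$ differently. The paper centers Lemma~\ref{lem: BarrierLocallyStraight} at $x_0$ with the original parameter $\delta$ (so $z$ and $y$ both sit in $\sB(x_0,r_\delta)$ once $\gamma_0<1/5$), uses item~\ref{item:GhostlyQuip} to conclude that at least one of $\langle x_0,\hatn\rangle$, $\langle y,\hatn\rangle$ lies within $(\gamma+4\delta)r_\delta$ of $\mathfrak{c}$, and then perturbs that endpoint slightly in the $-\hatn$ direction; item~\ref{item:JollyDragon} puts the perturbed point on the opposite side, so the connecting segment meets $B_i$ at distance $<r_\delta/10$ from the endpoint, contradicting membership in $\cD(\delta/2)$. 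Your route instead invokes the first-order optimality decomposition $x_0=z^*_{x_0}+t_{x_0}\vec{n}_i(z^*_{x_0})$ together with item~\ref{item:IdleBat} to lower-bound $\langle x_0,\hatn\rangle$ and $\langle y,\hatn\rangle$ directly; this forces you to capture $z^*_{x_0}$ and $z^*_y$ in the ball where the lemma applies, which is exactly why you need the doubling $\delta\mapsto 2\delta$. Your claim that ``centering at $x_0$ with the original $\delta$ fails'' is true for \emph{your} mechanism (since $z^*_y$ may escape $\sB(x_0,r_\delta)$) but not for the paper's, which never introduces the nearest points. In exchange, your argument avoids the perturbation-and-crossing step and is arguably more transparent; the cost is importing the nearest-point normal fact (standard, but not stated in the paper) and tracking slightly worse constants from the doubled scale.
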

\begin{proof}
	Suppose that, on the contrary, there exists some $z\in B_i$ with $\Vert \alpha x_0 + (1- \alpha)y - z \Vert \leq \gamma r_\delta$ for some $\alpha \in [0,1]$.
	Then, in particular, $\Vert z - x_0 \Vert \leq (4/5)r_\delta + \gamma r_\delta$ so we may assume that $B_i$ intersects $\sB(x_0, r_\delta)$ by taking $\gamma_0 < 1/5$.

	Take $\delta_0 < 1/2$ and let $\mathfrak{c}$ and $\hat{n}$ be as in Lemma \ref{lem: BarrierLocallyStraight}.
	Recall that $x_0$ and $y$ are assumed to lie on the same side of $B_i$.
	Let us assume that they are on the positive side.
	The case where they are on the negative side proceeds similarly.
	Using item \ref{item:GhostlyQuip} in Lemma \ref{lem: BarrierLocallyStraight},
	\begin{align}
		\langle \alpha x_0 + (1-\alpha)y, \hat{n} \rangle \leq \langle z , \hat{n} \rangle + \gamma r_\delta \leq \mathfrak{c} + \gamma r_\delta + 4\delta r_\delta.\label{eq:JollyCobra}
	\end{align}
	This is only possible if the same inequality is satisfied by at least one of $x_0$ or $y$:
	\begin{align}
		\langle x_0, \hat{n} \rangle \leq  \mathfrak{c} + \gamma r_\delta+ 4\delta r_\delta \ \text{ and/or }\ \langle y, \hat{n} \rangle \leq \mathfrak{c}  + \gamma r_\delta + 4\delta r_\delta.
	\end{align}
	Let us assume that the inequality with $y$ holds.
	The case with $x_0$ is similar.

	Taking $\gamma_0$ and $\delta_0$ sufficiently small such that $\gamma - 8\delta < 1/10$, we may assume that the point $\tilde{y} \de y - (\gamma r_\delta - 8\delta r_\delta) \hat{n}$ satisfies $\Vert y - \tilde{y} \Vert < r_\delta/10$.
	In particular, it then holds that $\tilde{y}\in \sB(x_0, r_\delta)$ and further $\langle \tilde{y}, \hat{n}  \rangle \leq \mathfrak{c} - 4 \delta r_\delta$.
	Item \ref{item:JollyDragon} from Lemma \ref{lem: BarrierLocallyStraight} now yields that $\tilde{y}$ is not on the positive side of $B_i$.

	Considering that $y$ was on the positive side of $B_i$ and $\tilde{y}$ is not, there is at least one point on the line segment connecting $y$ to $\tilde{y}$ that lies in $B_i$.
	In particular, there is at least one point in $B_i$ at distance $< r_\delta /10$ from $y$.
	However, the assumption $y\in \cD(\delta/2)$ means that $y$ is at distance $\geq r_\delta /10$ from all barriers; recall \eqref{eq: Def_Deps}.
	This yields a contradiction.
\end{proof}

\begin{lemma}
	\label{lem: TransportSameSide}
	There exist absolute constants $\delta_0, \gamma, c>0$ such that the following holds for $\delta \leq \delta_0$.
	Consider $x_0,y \in \cD(\delta/2)$ with $\Vert x-y \Vert\leq (4/5) r_\delta$ such that $x_0$ and $y$ lie on the same side of every barrier.
	Then, for $\varepsilon \leq  \gamma r_\delta$,
	\begin{align}
		\bbP\bigl(X_{ r_\delta^2} \in\sB(y,\varepsilon) \mid X_0 = x_0 \bigr) \geq c(\varepsilon/ r_\delta )^2.\label{eq:VividQuip}
	\end{align}
\end{lemma}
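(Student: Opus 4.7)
The plan is to reduce the claim to a direct application of Lemmas \ref{lem: PillShaped} and \ref{lem: StripIntersect}, by showing that on a suitable event of the driving Wiener process, no barrier is ever touched during the time interval $[0, r_\delta^2]$, so that $X_t = x_0 + W_t$ pathwise and the conclusion becomes immediate.

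First, I would fix $\gamma$ and $\delta_0$ small enough that Lemma \ref{lem: StripIntersect} applies: the tube $\cT \de \{z\in \bbR^2 : \exists \alpha\in[0,1],\ \Vert \alpha x_0 + (1-\alpha)y - z\Vert \leq \gamma r_\delta\}$ is then disjoint from $B_i$ for every $i$. Next, I would apply Lemma \ref{lem: PillShaped} with $R = r_\delta$ and target vector $y - x_0$ (noting $\Vert y-x_0\Vert \leq (4/5)r_\delta \leq R$, and shrinking $\gamma$ further if needed so that $\varepsilon \leq \gamma r_\delta$ falls within the admissible range $\varepsilon \leq \gamma R/2$) to obtain an absolute constant $c>0$ with
\begin{align}
	\bbP(\cE) \geq c\,(\varepsilon/r_\delta)^2, \qquad \text{where } \cE \de \bigl\{ W_{r_\delta^2} \in \sB(y-x_0,\varepsilon) \text{ and } \Vert W_t - (t/r_\delta^2)(y-x_0)\Vert \leq \gamma r_\delta\,\forall t\leq r_\delta^2 \bigr\}. \nonumber
\end{align}

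The core step is to show that $\cE$ implies $X_t = x_0 + W_t$ for every $t\leq r_\delta^2$. Let $T \de \inf\{t\geq 0 : X_t \in \cup_{i=0}^{\nb} B_i\}$ and suppose toward a contradiction that $T \leq r_\delta^2$. On $[0,T)$ no local time can increase (by Definition \ref{def: ReflectedBrownianMotion}\ref{item: Def_ReflectedBrownianMotion_ii}), so $X_t = x_0 + W_t$ for $t < T$. But on $\cE$ this gives $\Vert X_t - ((1-t/r_\delta^2)x_0 + (t/r_\delta^2)y)\Vert \leq \gamma r_\delta$ for $t < T$, i.e.\ $X_t \in \cT$, and by continuity $X_T \in \cT$ as well. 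Since $X_T \in \cup_i B_i$ by definition of $T$, this contradicts Lemma \ref{lem: StripIntersect}. Hence no barrier is ever touched, so $X_{r_\delta^2} = x_0 + W_{r_\delta^2} \in \sB(y,\varepsilon)$ whenever $\cE$ occurs, and the claim follows.

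The only obstacle is bookkeeping of the constants: one needs the $\gamma$ for which Lemma \ref{lem: StripIntersect} holds to be at least as small as the $\gamma$ fed into Lemma \ref{lem: PillShaped}, and one needs $\varepsilon \leq \gamma r_\delta/2$ rather than only $\varepsilon \leq \gamma r_\delta$; both are resolved by replacing $\gamma$ with $\min\{\gamma_0, \gamma/2\}$ at the outset, at the cost of absorbing a numerical factor into the constant $c$. Everything else is a straightforward combination of the preceding two lemmas, and no analogue of the delicate local-time arguments from Lemmas \ref{lem: StayInBall}--\ref{lem: NotAgain} is needed here since in the same-side regime we can simply arrange for barriers to be avoided altogether.
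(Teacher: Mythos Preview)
Your proposal is correct and follows essentially the same approach as the paper: fix $\gamma,\delta_0$ so that Lemma~\ref{lem: StripIntersect} applies, invoke Lemma~\ref{lem: PillShaped} with $R=r_\delta$ to lower-bound the probability of the tube event, and conclude that on this event no barrier is touched so $X_t = x_0 + W_t$. Your stopping-time argument for the last step is a slightly more explicit version of the paper's one-line appeal to the SDE and the local-time characterization, and your remarks on the constant bookkeeping (in particular the factor of $2$ between the $\varepsilon$-range in Lemma~\ref{lem: PillShaped} and the statement here) are exactly what the paper sweeps into ``sufficiently small''.
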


\begin{proof}
	Let $\gamma$ and $\delta_0$ be sufficiently small so that Lemma \ref{lem: StripIntersect} is applicable.
	Then, by Lemma \ref{lem: PillShaped} with $R =  r_\delta$ there exists an absolute constant $c$ with
	\begin{equation}
		\bbP\bigl(x_0 + W_{ r_\delta^2} \in \sB(y,\varepsilon) \text{ and } \Vert W_t - (t/ r_\delta^2)(y - x_0)\Vert \leq  \gamma r_\delta, \forall t\leq  r_\delta^2 \bigr) \geq c(\varepsilon/ r_\delta )^2.\label{eq:MoistIce}
	\end{equation}
	The second condition in \eqref{eq:MoistIce} ensures that $x_0 + W_t$ is always at distance $\leq  \gamma r_\delta$ from a convex combination of $x_0$ and $y$.
	Hence, Lemma \ref{lem: StripIntersect} implies that $x_0 + W_t$ does not hit $\cup_{i=0}^m B_i$ for $t\leq  r_\delta^2$.
	The stochastic differential equation in Definition \ref{def: ReflectedBrownianMotion} then implies that $X_t = x_0 + W_t$ since the local time only increases at times when a barrier is hit.
	The first condition in \eqref{eq:MoistIce} can then be equivalently stated as $X_{ r_\delta^2} \in \sB(y,\varepsilon)$ given $X_0 = x_0$.
	This implies \eqref{eq:VividQuip}.
\end{proof}

\begin{figure}[hbt]
	\centering
	\includegraphics[width = 0.9\textwidth]{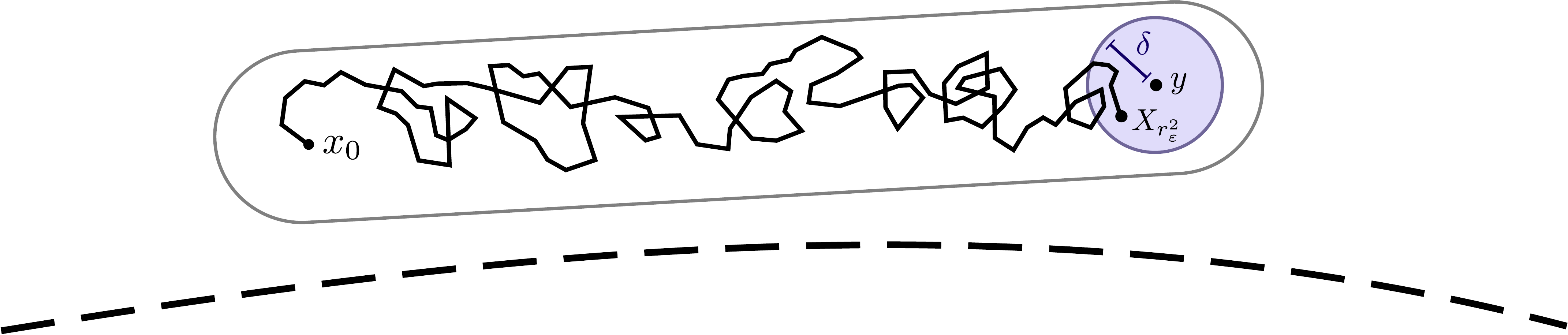}
	\caption{
		Visualization of the proof idea for Lemma \ref{lem: TransportSameSide}.
		By using Lemma \ref{lem: PillShaped} on a sufficiently narrow neighborhood of the line connecting $x_0$ to $y$, we can ensure that the manufactured process $x_0 + W_t$ never hits any barrier.
		Then, $X_{r_\delta^2} = x_0 + W_{r_\delta^2}$ lands in the ball of radius $\varepsilon$ around $y$.
	}
	\label{fig: AvoidBarrier}
\end{figure}

\subsubsection{Combining the estimates}
The combination of Lemma \ref{lem: ReductionSameSide} and Lemma \ref{lem: TransportSameSide} yields an estimate for general $x_0,y \in \cD(\delta)$ at distance $\leq (4/5) r_\delta$.
\begin{corollary}\label{cor: TransportClose}
	There exist absolute constants $\delta_0,\gamma,c>0$ such that the following holds for every $\delta \leq \delta_0$.
	Consider $x_0,y \in \cD(\delta)$ with $\Vert x_0-y \Vert\leq (4/5) r_\delta$.
	Then, for every $\varepsilon \leq \gamma r_\delta$,
	\begin{align}
		\bbP\bigl(X_{t_\delta} \in\sB(y,\varepsilon) \mid X_0 = x_0 \bigr) \geq c r_\delta \lambda_{\min} (\varepsilon/ r_\delta)^2\ \text{ with }\ t_\delta \de 2 r_\delta^2.\label{eq:PinkMob}
	\end{align}
\end{corollary}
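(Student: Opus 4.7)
The plan is to combine the two preceding local lemmas via the Markov property applied at the intermediate time $r_\delta^2$, handling separately the two cases of whether $x_0$ and $y$ lie on the same side of every barrier.

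In the first case, there is some barrier $B_i$ separating $x_0$ and $y$. I would apply Lemma \ref{lem: ReductionSameSide} over the time interval $[0, r_\delta^2]$ to deduce that $X_{r_\delta^2} \in \cZ(\delta, y)$ with probability at least $c_1 r_\delta \lambda_{\min}$. By the very definition of $\cZ(\delta, y)$, every point $z$ in this set satisfies $z \in \cD(\delta/2)$, $\Vert z - y \Vert \leq (4/5) r_\delta$, and is on the same side of every barrier as $y$; hence Lemma \ref{lem: TransportSameSide} applies and yields $\bbP(X_{r_\delta^2} \in \sB(y,\varepsilon) \mid X_0 = z) \geq c_2 (\varepsilon/r_\delta)^2$ uniformly in $z \in \cZ(\delta, y)$, provided $\varepsilon \leq \gamma r_\delta$. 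Conditioning at time $r_\delta^2$ and using the Markov property then produces the claimed bound $\geq c_1 c_2 \, r_\delta \lambda_{\min}(\varepsilon/r_\delta)^2$.

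In the second case, $x_0$ and $y$ are already on the same side of every barrier, and the same Markov-property decomposition works, but we need two invocations of Lemma \ref{lem: TransportSameSide} in order to fill the full time $t_\delta = 2 r_\delta^2$. First apply it with target radius $\gamma r_\delta / 2$ to transport from $x_0$ into $\sB(y, \gamma r_\delta/2)$ over $[0,r_\delta^2]$ with constant probability. Noting that the underlying event of Lemma \ref{lem: TransportSameSide} forces $X_t = x_0 + W_t$ without any barrier crossings, the intermediate point $z$ remains on the same side of every barrier as $y$, and since $y \in \cD(\delta)$ the bound $\Vert z - y \Vert \leq \gamma r_\delta/2$ keeps $z$ at distance at least $r_\delta/10$ from all barriers for $\gamma$ sufficiently small, so $z \in \cD(\delta/2)$. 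A second application of Lemma \ref{lem: TransportSameSide} from $z$ over $[r_\delta^2, 2r_\delta^2]$ brings the process into $\sB(y,\varepsilon)$ with probability $\geq c (\varepsilon/r_\delta)^2$. Since $r_\delta \lambda_{\min} \leq \delta_0 < 1$ by \eqref{eq: Def_R_eps}, this bound is in particular stronger than the one claimed, so Case 2 does not degrade the final constant.

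The only technical bookkeeping, which I expect to be the main (though routine) obstacle, is verifying that the intermediate state after the first $r_\delta^2$ seconds satisfies all the hypotheses required to invoke Lemma \ref{lem: TransportSameSide} in the second half — namely membership in $\cD(\delta/2)$, proximity to $y$, and same-side-ness for \emph{every} barrier simultaneously. In Case 1 this is already encoded in the definition of $\cZ(\delta, y)$ and delivered by Lemma \ref{lem: ReductionSameSide}; in Case 2 it follows from the stronger no-crossing event used inside the proof of Lemma \ref{lem: TransportSameSide}, combined with a cheap triangle-inequality estimate showing that a point within $\gamma r_\delta/2$ of $y$ inherits the distance-from-barriers property from $y$ when $\gamma$ is small enough.
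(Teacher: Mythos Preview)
Your proposal is correct and follows essentially the same approach as the paper: split into the two cases, use Lemma~\ref{lem: ReductionSameSide} followed by Lemma~\ref{lem: TransportSameSide} in the different-side case, and two successive applications of Lemma~\ref{lem: TransportSameSide} to fill both halves of $[0,2r_\delta^2]$ in the same-side case, finally absorbing the missing $r_\delta\lambda_{\min}$ factor via $r_\delta\lambda_{\min}\leq 1$. The only cosmetic difference is that in Case~2 you aim the first step directly at $y$ (landing in $\sB(y,\gamma r_\delta/2)$), whereas the paper is vaguer about the first target; your version is slightly more explicit about why the intermediate point lies in $\cD(\delta/2)$ and stays on the same side, but the argument is the same.
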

\begin{proof}
	If $x_0$ and $y$ are on different sides of the barrier, then one can first apply Lemma \ref{lem: ReductionSameSide} to ensure that $X_{ r_\delta^2}$ is on the same side as $y$ with probability $\geq c_1  r_\delta \lambda_{\min}$ and subsequently apply Lemma \ref{lem: TransportSameSide} to ensure that $X_{2 r_\delta^2} \in \sB(y,\varepsilon)$ with (conditional) probability $\geq c_2 (\varepsilon/  r_\delta)^2$.
	The combination then yields \eqref{eq:PinkMob} with absolute constant $c_1 c_2$.

	If $x_0$ and $y$ are on the same side of the barrier,   then one can first burn a bit of time to keep the timing consistent with the other case.
	Specifically,  using Lemma \ref{lem: TransportSameSide} with $\varepsilon$ replaced by $\gamma r_\delta$ and $\gamma$ sufficiently small, it may be ensured that $X_{r_\delta^2}$ remains on the same side and satisfies the same conditions as $x_0$ does in the assumption of Corollary \ref{cor: TransportClose} with probability $\geq c_3$.
	% $X_{ r_\delta^2}$ remains on the same side with probability $\geq c_3$ for some absolute constant $c_3>0$.
	Hereafter, Lemma \ref{lem: TransportSameSide} yields \eqref{eq:PinkMob} with absolute constant $c_3 c_2$ provided that one also uses that $ r_\delta \lambda_{\min} \leq \delta \leq 1$.
	Take $c \de \min\{c_1c_2, c_3c_2 \}$ to conclude.
\end{proof}

\subsection{Points distant from the barriers and from each other}\label{sec: DistantEndpoints}

We next consider the case where $x_0,y \in \cD(\delta)$ are not necessarily near to each other.
The idea is then to iteratively use Corollary \ref{cor: TransportClose} to replace $x_0$ by some point with reduced distance to $y$.
In this context, the suitable notion of distance is not the Euclidean distance but rather the \emph{geodesic distance}:
\begin{align}
	\dgeo(x_0,y) \de \inf_{\gamma} \int_0^1 \Vert \gamma'(t) \Vert \intd t.\label{eq: Def_dgeo}
\end{align}
The infimum here runs over piecewise smooth curves $\gamma:[0,1]\to D$ with $\gamma(0) = x_0$ and $\gamma(1) =y$.
A visualization of the approach may be found in Figure \ref{fig: Geo}.

\begin{figure}[bht]
	\centering
	\includegraphics[width = 0.85\textwidth]{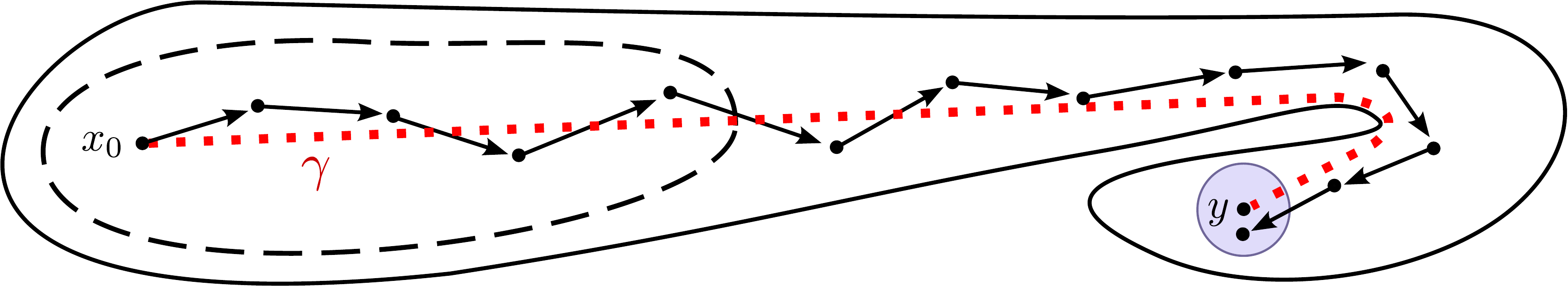}
	\caption{
		Visualization of the approach in Section \ref{sec: DistantEndpoints}.
		A geodesic $\gamma$ connecting $x_0$ to $y$ is visualized by the red dotted line.
		The main idea is that Corollary \ref{cor: TransportClose} allows one to iteratively transport some probability mass along this curve.
	}
	\label{fig: Geo}
\end{figure}

We start with some preparatory lemmas.
We need these because Corollary \ref{cor: TransportClose} only applies when both endpoints are at nontrivial distance from the barriers, while a geodesic may get arbitrarily close to the barriers.

\begin{lemma}\label{lem: ClosePoint}
	For $\mu,\nu>0$ with $1-\mu -\nu \geq 1/5$ and $\mu -\nu > 1/5$ there exists some $\delta_0 \geq 0$ such that the following holds for $\delta \leq \delta_0$.
	For every $x\in D$ there exists some $z \in \cD(\delta)$ with $\Vert x- z \Vert \leq \mu r_\delta$ such that $\sB(z,  \nu r_\delta)\subseteq \cD(\delta)$ and $\{\alpha z + (1-\alpha)x: \alpha \in [0,1] \} \subseteq D$.
\end{lemma}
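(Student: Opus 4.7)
The plan is to split into two cases based on whether $\sB(x, r_\delta)$ meets any barrier. If no barrier meets $\sB(x, r_\delta)$, then I would take $z \de x$: condition (2) is trivial, and $\sB(z, \nu r_\delta)$ lies at distance $\geq (1 - \nu) r_\delta \geq r_\delta/5$ from every barrier using the hypothesis $\mu + \nu \leq 4/5$, which yields (1) and (3). Since $\sB(x, r_\delta)$ is connected, disjoint from $B_0 = \partial D$, and meets $D$ at $x$, it lies in the bounded component $D_0$, giving (4).

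Otherwise some barrier $B_i$ meets $\sB(x, r_\delta)$. Lemma \ref{lem: BarrierLocallyStraight} (applicable once $\delta_0 < 1/2$) then supplies a unique such $B_i$, a unit vector $\hatn$, and a constant $\mathfrak{c}$ satisfying items \ref{item:GhostlyQuip}--\ref{item:JollyDragon}. I would let $s \in \{+1, -1\}$ be any sign for which $x$ lies on the $s$-side of $B_i$, with $s = +1$ when $i = 0$ since $D$ coincides with the positive side of $B_0$, and set $z \de x + s \mu r_\delta \hatn$. For (3), items \ref{item:GhostlyQuip} and \ref{item:JollyDragon} combine to give $s\langle x - w, \hatn\rangle \geq -8\delta r_\delta$ for every $w \in B_i \cap \sB(x, r_\delta)$, so for $z' \in \sB(z, \nu r_\delta)$,
\begin{align*}
s\langle z' - w, \hatn\rangle = s\langle z' - z, \hatn\rangle + \mu r_\delta + s\langle x - w, \hatn\rangle \geq (\mu - \nu - 8\delta) r_\delta.
\end{align*}
Since $\hatn$ is a unit vector this lower bounds $\|z' - w\|$, and by the hypothesis $\mu - \nu > 1/5$ the bound exceeds $r_\delta/5$ once $\delta_0 \leq (\mu - \nu - 1/5)/8$. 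For $w$ in $B_i \setminus \sB(x, r_\delta)$ or on any other barrier, the triangle inequality gives $\|z' - w\| \geq (1 - \mu - \nu) r_\delta \geq r_\delta/5$. This verifies (3), hence (1), and (2) is immediate.

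The main obstacle is (4). For $i \geq 1$ the segment lies inside $\sB(x, r_\delta)$, a connected set disjoint from $B_0$, so the argument of the first paragraph places it in $D_0$. The subtle case is $i = 0$, where I would invoke the interior ball condition supplied by $B_0$ being smooth with curvature $\leq \kappa$: letting $q \in B_0$ be the point nearest to $x$, the open ball $\sB(c, 1/\kappa)$ with $c \de q + (1/\kappa)\vec{n}_0(q)$ lies inside $D$. Writing $x = q + d \vec{n}_0(q)$ with $0 \leq d < r_\delta$ and using $\|\hatn - \vec{n}_0(q)\| < 2\delta$ from item \ref{item:IdleBat}, a direct computation gives
\begin{align*}
\|x + \alpha \mu r_\delta \hatn - c\| \leq 1/\kappa - d - \alpha \mu r_\delta (1 - 2\delta) \leq 1/\kappa
\end{align*}
for every $\alpha \in [0, 1]$, provided $\delta_0 < 1/2$ ensures $r_\delta (1 + \mu) < 1/\kappa$. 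The segment therefore lies in $\overline{\sB(c, 1/\kappa)} \subseteq D$, which yields (4).
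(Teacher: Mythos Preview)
Your proof tracks the paper's closely through the case split, the choice $z = x + s\mu r_\delta\hatn$, and the verification that $\sB(z,\nu r_\delta)\subseteq\cD(\delta)$; those parts are correct and essentially identical to the paper's argument.

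The genuine gap is in the final step for $i=0$, where you assert that smoothness of $B_0$ with curvature $\leq\kappa$ supplies an interior tangent ball of radius $1/\kappa$ at every boundary point. This is false in the paper's setting. A concrete counterexample is an ``hourglass'' domain such as
\[
D=\bigl\{(x,y):|y|\leq 1+\epsilon x^2,\ |x|\leq L\bigr\}
\]
with small $\epsilon$ and large $L$, closed off smoothly at $x=\pm L$. The parabolic arcs have curvature at most $2\epsilon$, and for $L$ large the end caps and $C^\infty$ smoothings can be arranged with curvature well below $2\epsilon$; yet at the waist point $q=(0,1)$ the interior tangent ball is blocked by the opposite arc $y=-1$ and has radius $\leq 1\ll 1/(2\epsilon)$. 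In the paper's framework such a domain simply has small $\rho$ (here $\rho\leq 1$), so Lemma~\ref{lem: BarrierLocallyStraight} still applies at scale $r_\delta\leq\delta\rho$, but your ball $\sB(q+(1/\kappa)\vec{n}_0(q),1/\kappa)$ reaches far beyond that scale and exits $D$. The curvature bound alone does not control the interior ball radius; one also needs something like $\rho$.

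The paper handles this step more directly and stays at scale $r_\delta$: assuming the segment $Z(\alpha)=\alpha z+(1-\alpha)x$ leaves $D$, take $\alpha_*$ minimal with $Z(\alpha_*)\in B_0$; then $Z(\alpha_*)\in B_0\cap\sB(x,r_\delta)$ and the exit forces $\langle Z'(\alpha_*),\vec{n}_0(Z(\alpha_*))\rangle\leq 0$, while $Z'(\alpha_*)=\mu r_\delta\hatn$ and item~\ref{item:IdleBat} of Lemma~\ref{lem: BarrierLocallyStraight} gives $\langle\hatn,\vec{n}_0(Z(\alpha_*))\rangle\geq 1-2\delta>0$, a contradiction. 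This uses both $\kappa$ and $\rho$ implicitly through Lemma~\ref{lem: BarrierLocallyStraight}. Your approach could in principle be repaired by replacing $1/\kappa$ with a radius comparable to $\min(1/\kappa,\rho)$ and justifying the corresponding interior ball, but the paper's first-exit argument is shorter and self-contained.
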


\begin{proof}
	If the intersection of $\sB(x_0, r_\delta)$ with $\cup_{i=0}^m B_i$ is empty, then we can take $z = x$.
	Assume that there is some $i\geq 0$ with $B_i\cap \sB(x,  r_\delta) \neq \emptyset$.
	Then, we show that the following works:
	\begin{align}
		z \de
		\begin{cases}
			x +  \mu r_\delta \hatn & \text{ if }x \text{ is on the positive side of }B_i, \\
			x - \mu r_\delta \hatn  & \text{ otherwise}.
		\end{cases}\label{eq:GladJet}
	\end{align}
	Here, recall that $x$ is on the positive side of $B_i$ when $x\in B_i$.
	This is important if $i=0$ as $x - \mu r_\delta\hatn$ may not even be an element of $D$ when $x\in B_0$.

	It makes no substantial difference for the subsequent arguments whether $x$ is on the positive or negative side if $i\neq 0$.
	Further, $x\in D$ is always on the positive side of $B_0$.
	To simplify notation, let us hence focus on the case where $x$ is on the positive side of $B_i$.

	The proposed $z$ certainly satisfies $\Vert x - z \Vert \leq  \mu r_\delta$.
	We next verify that $\sB(z, \nu r_\delta) \subseteq \cD(\delta)$.
	Lemma \ref{lem: BarrierLocallyStraight} yields that $B_i$ is the only barrier which intersects $\sB(x,  r_\delta)$.
	In particular, every point in $\sB(z,  \nu r_\delta)$ has distance $> (1- \mu - \nu) r_\delta \geq  r_\delta/5$ from all $B_j$ with $j\neq i$.
	Similarly, the distance from $B_i\setminus \sB(x,r_\delta)$ is $\geq r_\delta/5$.
	Recalling the definition of $\cD(\delta)$ from \eqref{eq: Def_Deps}, we hence have to show that every point in $\sB(z,  \nu r_\delta)$  lies in $D$ and has distance $\geq  r_\delta/5$ from $B_i \cap \sB(x, r_\delta)$.

	Recall that we are focusing on the case where $x$ is on the positive side of $B$.
	Consequently, by \ref{item:JollyDragon} in Lemma \ref{lem: BarrierLocallyStraight}, we have
	$
		\langle x,  \hatn  \rangle \geq  \mathfrak{c} -4 \delta  r_\delta$.
	Hence, for $\zeta \in \sB(z,\nu r_\delta)$,
	\begin{align}
		\langle \zeta , \hatn \rangle \geq \langle x , \hatn \rangle + (\mu  - \nu) r_\delta \geq \mathfrak{c} + (\mu  - \nu - 4\delta) r_\delta.
	\end{align}
	On the other hand, recall from item \ref{item:GhostlyQuip} that for $y \in B_i \cap \sB(x_0,r_\delta)$, $\langle y, \hatn \rangle \leq \mathfrak{c} + 4\delta r_\delta$.
	Then, by the Cauchy--Schwarz inequality,
	\begin{align}
		\Vert \zeta -y  \Vert \geq \langle \zeta - y , \hatn \rangle  \geq (\mu - \nu - 8 \delta r_\delta) \ \text{ for every }y \in B_i \cap \sB(x_0,r_\delta).
	\end{align}
	Recall that $\mu - \nu > 1/5$.
	Hence, taking $\delta_0$ sufficiently small, it may be ensured that $\mu  - \nu - 8\delta > 1/5$.
	This proves that $\sB(z,\nu r_\delta) \subseteq \cD(\delta)$ as desired.

	Let us finally verify that all convex combinations of $x$ and $z$ are in $D$.
	Suppose to the contrary that $Z(\alpha) \de \alpha z + (1-\alpha) x$ is not in $D$ for some $\alpha \in [0,1]$ and set
	\begin{align}
		\alpha_* \de \inf\{\alpha \in [0,1] : Z(\alpha) \not\in D \}.
	\end{align}
	Then, $Z(\alpha_*) \in B_0$ and $Z(\alpha_* + \delta)$ is strictly on the negative side of $B_0$ for $\delta>0$ sufficiently small.
	Then, recalling that $\vecn_0$ points towards the positive side, the derivative of $Z$ satisfies
	$
		\bigl\langle Z'(\alpha_*), \vecn_0(Z(\alpha_*)) \bigr\rangle \leq 0.
	$
	But, by definition of $Z(\alpha)$ and by \eqref{eq:GladJet}, we have $Z'(\alpha_*) = \mu r_\delta \hatn$, and it follows from item \ref{item:IdleBat} in Lemma \ref{lem: BarrierLocallyStraight} that $\langle \hatn, \vecn_0(Z(\alpha_*)) \rangle \geq 1 - 2\delta > 0$.
	This yields a contradiction.
\end{proof}
\begin{lemma}\label{lem: ReduceGeo}
	There exist absolute constants $\delta_0, \nu >0$ such that the following holds for every $\delta \leq \delta_0$.
	Consider $x_0,y \in D$ with $\dgeo(x_0, y) \geq (4/5) r_\delta$.
	Then, there exists $z\in \cD(\delta)$ with $\Vert z - x_0 \Vert \leq (4/5) r_\delta$ satisfying that $\sB(z,\nu r_\delta)\subseteq \cD(\delta)$ and
	\begin{align}
		\dgeo(\zeta, y) \leq \dgeo(x_0, y) -  r_\delta/10 \ \text{ for every } \ \zeta \in \sB(z,  \mu r_\delta).  \label{eq:FastWok}
	\end{align}
\end{lemma}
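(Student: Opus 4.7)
The plan is to walk a controlled fraction of $r_\delta$ along a near-geodesic from $x_0$ toward $y$, and then use Lemma \ref{lem: ClosePoint} to nudge the endpoint of that walk into a point $z$ sitting comfortably inside $\cD(\delta)$ with a small ball around it. First I fix absolute constants $\mu, \nu > 0$ and a step length $\ell \in (0,1)$ satisfying three requirements simultaneously: (i) $1 - \mu - \nu \geq 1/5$ and $\mu - \nu > 1/5$, so that Lemma \ref{lem: ClosePoint} applies; (ii) $\ell + \mu \leq 4/5$, so that the triangle inequality will yield $\Vert x_0 - z \Vert \leq (4/5) r_\delta$; and (iii) $\ell - (\mu + \nu) \geq 1/10$, so that the length bookkeeping will produce the required saving of $r_\delta/10$ in the geodesic distance. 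All three can be arranged at once, e.g. $\mu = 7/30$, $\nu = 1/60$, $\ell = 1/2$, and Lemma \ref{lem: ClosePoint} then supplies the absolute constant $\delta_0$.

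Given any auxiliary $\eta > 0$, I pick a piecewise smooth curve $\gamma : [0, L] \to D$ from $x_0$ to $y$, parameterized by arc length, with $L \leq \dgeo(x_0,y) + \eta$. Since $\dgeo(x_0,y) \geq (4/5) r_\delta > \ell r_\delta$, I may take $\eta$ small enough that $\ell r_\delta < L$. Set $x' \de \gamma(\ell r_\delta)$; then $\Vert x_0 - x' \Vert \leq \ell r_\delta$ and the tail of $\gamma$ witnesses $\dgeo(x',y) \leq L - \ell r_\delta$. Applying Lemma \ref{lem: ClosePoint} to $x'$ produces $z \in \cD(\delta)$ with $\Vert x' - z \Vert \leq \mu r_\delta$, with $\sB(z,\nu r_\delta) \subseteq \cD(\delta)$, and with the straight segment from $x'$ to $z$ contained in $D$. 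The triangle inequality then gives $\Vert x_0 - z \Vert \leq (\ell + \mu) r_\delta \leq (4/5) r_\delta$. For any $\zeta$ in the ball around $z$ (interpreting the $\mu r_\delta$ in the statement as $\nu r_\delta$, since Lemma \ref{lem: ClosePoint} only guarantees a ball of radius $\nu r_\delta$ inside $D$), I concatenate three paths from $\zeta$ to $y$: the segment $\zeta \to z$, which lies inside $\sB(z,\nu r_\delta) \subseteq D$; the segment $z \to x'$, inside $D$ by Lemma \ref{lem: ClosePoint}; and the tail of $\gamma$ from $x'$ to $y$. The total length is at most $\nu r_\delta + \mu r_\delta + (L - \ell r_\delta) \leq \dgeo(x_0,y) + \eta - (\ell - \mu - \nu) r_\delta$; letting $\eta \downarrow 0$ and invoking (iii) yields $\dgeo(\zeta, y) \leq \dgeo(x_0,y) - r_\delta/10$, as required.

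The main point requiring care — and precisely the reason Lemma \ref{lem: ClosePoint} was stated with the explicit clause that the segment from $x$ to $z$ lies in $D$ — is to verify that every piece of the concatenated path is inside $D$, so that its length is an actual upper bound on $\dgeo(\zeta,y)$. No individual step is deep; the argument is essentially a bookkeeping exercise once the parameter budget is set up and Lemma \ref{lem: ClosePoint} has been invoked. The only mildly delicate calibration is checking that the three constraints on $\mu$, $\nu$, and $\ell$ are jointly feasible, which forces $\mu$ to lie slightly above $1/5$ and $\nu$ to be small.
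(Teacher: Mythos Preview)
Your argument is essentially identical to the paper's: advance a fixed fraction $\ell r_\delta$ along a near-geodesic, apply Lemma~\ref{lem: ClosePoint} at the resulting point, and concatenate the segments $\zeta \to z \to x'$ with the remaining tail of $\gamma$; the paper takes $\mu = 3/10$, $\nu = 1/100$, $\ell = 1/2$ and a fixed slack $r_\delta/100$ in place of your $\eta$. One small slip: your point $z$ depends on $\gamma$ and hence on $\eta$, so ``letting $\eta \downarrow 0$'' after $z$ is constructed is not quite legitimate---but since your chosen constants give $\ell - \mu - \nu = 1/4 > 1/10$, simply fixing any $\eta < (3/20)\, r_\delta$ at the outset (as the paper does) repairs this immediately.
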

\begin{proof}
	By definition of the geodesic distance, there exists a piecewise smooth curve $\gamma:[0,1]\to D$ with length $\leq \dgeo(x_0, y) +  r_\delta/100$ satisfying $\gamma(0) = x_0$ and $\gamma(1) = y$.
	Moreover, $\gamma$ then necessarily has length $\geq \dgeo(x_0, y) \geq (4/5) r_\delta$.
	Denote $\tau$ for the least time such that the piece of $\gamma$ connecting $x_0$ to $\gamma(\tau)$ has length $\geq r_\delta/2$:
	\begin{align}
		\tau \de \inf\Bigl\{t\in [0,1]: \int_0^t \Vert \gamma'(s) \Vert\, \intd s \geq  r_\delta/2 \Bigr\}.\label{eq: ProofGeo_Deftau}
	\end{align}
	We next apply Lemma \ref{lem: ClosePoint} with suitable parameters such as, say, $\mu = 3/10$ and $\nu = 1/100$.
	This yields some $z \in \cD(\delta)$ with $\Vert z - \gamma(\tau) \Vert \leq (3/10) r_\delta$ satisfying that $\sB(z,  \nu r_\delta) \subseteq \cD(\delta)$ and $\{\alpha z + (1-\alpha)\gamma(\tau) : \alpha \in [0,1]\}\subseteq D$.

	By definition of $\tau$, the piece of $\gamma$ connecting $x_0$ to $\gamma(\tau)$ has length $r_\delta/2$.
	Hence, since the shortest curve connecting two points in $\bbR^2$ is a straight line, $\Vert x_0 -  \gamma(\tau) \Vert \leq  r_\delta/2$.
	By the triangle inequality, it then follows that $\Vert z - x_0 \Vert \leq (3/10 + 1/2) r_\delta = (4/5) r_\delta$.
	What remains to be shown is that \eqref{eq:FastWok} holds.

	Fix some $\zeta \in \sB(z,  \nu r_\delta)$ and define a piecewise smooth $\tilde{\gamma}: [0,1] \to \bbR^2$ satisfying $\tilde{\gamma}(0) = \zeta$ and $\tilde{\gamma}(1) = y$ by first going in a straight line from $\zeta$ to $z$, then going in a straight line from $z$ to $\gamma(\tau)$, and finally following $\gamma$ towards $y$:
	\begin{align}
		\tilde{\gamma}(t) =
		\begin{cases}
			\zeta + (2t/\tau)\bigl(z - \zeta) \qquad                   & \text{ if }t \leq \tau/2,        \\
			z + ((2t - \tau)/\tau)\bigl(\gamma(\tau) - z\bigr), \qquad & \text{ if } \tau/2 < t\leq \tau, \\
			\gamma(t), \qquad                                          & \text{ if }t >\tau.
		\end{cases} \label{eq:HappyZebra}
	\end{align}
	Observe that $\{\tilde{\gamma}(t): t\leq \tau/2 \}\subseteq D$ because $\sB(z, \nu r_\delta)\subseteq \cD(\delta)$ and $\{\tilde{\gamma}(t): \tau/2 \leq t\leq \tau \}\subseteq D$ since $\{\alpha z + (1-\alpha)\gamma(\tau) : \alpha \in [0,1]\}\subseteq D$ by definition of $z$.
	Hence, also using that $\{\tilde{\gamma}(t): \tau <t \leq 1 \}\subseteq D$ since $\gamma$ has codomain $D$, we can view $\tilde{\gamma}$ as a map from $[0,1]$ to $D$.
	Consequently, the length of $\tilde{\gamma}$ provides an upper bound on $\dgeo(\zeta, y)$.

	Considering the contributions to the length for $t\leq \tau/2$, $\tau/2 < t\leq \tau$, and $t>\tau$ separately, we have
	\begin{align}
		\int_0^1 \Vert \tilde{\gamma}'(t) \Vert \, \intd t & =  \Vert \zeta - z \Vert + \Vert z - \gamma(\tau) \Vert +  \int_\tau^1 \Vert \gamma'(t) \Vert \, \intd t.   \label{eq:ElegantLion}
	\end{align}
	Recall that, by definition of $\tau$, the piece of $\gamma$ connecting $x_0$ to $\gamma(\tau)$ has length $r_\delta/2$.
	Further, recall that $\gamma$ was chosen to have total length $\leq \dgeo(x_0, y) +  r_\delta/100$.
	Consequently,
	\begin{align}
		\int_\tau^1 \Vert \gamma'(t) \Vert \, \intd t & = \int_0^1 \Vert \gamma'(t) \Vert \, \intd t - \int_0^\tau \Vert \gamma'(t) \Vert \, \intd t  \leq  \dgeo(x_0, y) +  r_\delta/100 -  r_\delta/2.\label{eq:CleverUser}
	\end{align}
	Combine \eqref{eq:ElegantLion} and \eqref{eq:CleverUser} with the facts that $\Vert \zeta - z \Vert \leq \nu r_\delta =  r_\delta /100$ and $\Vert z - \gamma(\tau) \Vert  \leq (3/10) r_\delta$ to conclude that
	\begin{align}
		\int_0^1 \Vert \tilde{\gamma}'(t) \Vert\, \intd t & \leq \dgeo(x_0, y) + (3/10 + 2/100- 1/2)  r_\delta \leq \dgeo(x_0, y) -  r_\delta/10.
	\end{align}
	This proves \eqref{eq:FastWok}.
\end{proof}
It is convenient to also have a result for the case with $\dgeo(x_0,y) \leq (4/5)r_\delta$.
In this case we do not try to decrease the geodesic distance, but simply avoid enlarging it:

\begin{lemma}\label{lem: DoNotGrowGeo}
	There exist absolute constants $\delta_0, \nu >0$ such that the following holds for every $\delta \leq \delta_0$.
	Consider $x_0,y \in D$ with $\dgeo(x_0, y) \leq (4/5) r_\delta$.
	Then, there exists $z\in \cD(\delta)$ with $\Vert z - x_0 \Vert \leq (4/5) r_\delta$ satisfying $\sB(z,\nu r_\delta)\subseteq \cD(\delta)$ and
	\begin{align}
		\dgeo(\zeta, y) \leq (4/5)  r_\delta \ \text{ for }\ \zeta \in \sB(z, \nu r_\delta). \label{eq:CrimsonEgg}
	\end{align}
\end{lemma}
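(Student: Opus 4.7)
The plan is to mirror the construction from the proof of Lemma \ref{lem: ReduceGeo}, with one essential modification. In that proof, the connecting curve $\gamma$ is followed for a fixed arc length $r_\delta/2$ before invoking Lemma \ref{lem: ClosePoint}. Since $\dgeo(x_0,y)$ may now be much smaller than $r_\delta/2$, no such initial piece of $\gamma$ need exist, so we instead advance only to the arc-length midpoint of $\gamma$.

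Concretely, I would fix $\mu = 3/10$ and set $\nu = 1/100$ (the same values that work for Lemma \ref{lem: ReduceGeo}), which satisfy the constraints $1-\mu-\nu \geq 1/5$ and $\mu-\nu > 1/5$ required by Lemma \ref{lem: ClosePoint}, and choose $\delta_0$ small enough for that lemma to apply. Take a piecewise smooth $\gamma:[0,1] \to D$ with $\gamma(0)=x_0$ and $\gamma(1)=y$ of total length $L \leq \dgeo(x_0, y) + r_\delta/100 \leq (81/100)\, r_\delta$, and let $\tau \in [0,1]$ be the least parameter such that $\gamma|_{[0,\tau]}$ has arc length $L/2$. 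Applying Lemma \ref{lem: ClosePoint} at $\gamma(\tau)$ yields $z \in \cD(\delta)$ with $\Vert z - \gamma(\tau)\Vert \leq \mu r_\delta$, $\sB(z, \nu r_\delta)\subseteq \cD(\delta)$, and the segment from $z$ to $\gamma(\tau)$ contained in $D$.

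The Euclidean bound $\Vert z - x_0 \Vert \leq (4/5)\, r_\delta$ then follows from $\Vert x_0 - \gamma(\tau) \Vert \leq L/2$ together with the triangle inequality, since $\mu r_\delta + L/2 \leq (141/200)\, r_\delta$. To verify \eqref{eq:CrimsonEgg}, for any $\zeta \in \sB(z, \nu r_\delta)$ I would construct an admissible $\tilde{\gamma}$ from $\zeta$ to $y$ by concatenating the segments $\zeta \to z$ and $z \to \gamma(\tau)$ with the tail $\gamma|_{[\tau,1]}$. Each of these three pieces lies in $D$: the first because $\sB(z,\nu r_\delta) \subseteq \cD(\delta) \subseteq D$, the second by the convex-combination guarantee of Lemma \ref{lem: ClosePoint}, and the third because $\gamma$ takes values in $D$. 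Hence $\tilde{\gamma}$ is admissible and has length at most $\nu r_\delta + \mu r_\delta + L/2 \leq (143/200)\, r_\delta < (4/5)\, r_\delta$, as required.

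I do not foresee any substantive obstacle beyond this bookkeeping. The only regime to watch is that of very small $\dgeo(x_0,y)$ (including $x_0 = y$), where $L/2$ is simply tiny and the bounds above become only more slack, so no separate treatment is needed; the proof reduces to a shortened analogue of that of Lemma \ref{lem: ReduceGeo}.
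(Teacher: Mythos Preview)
Your proposal is correct and follows essentially the same approach as the paper: both advance along a near-geodesic $\gamma$ to its arc-length midpoint, apply Lemma~\ref{lem: ClosePoint} there with $\mu=3/10$ and $\nu=1/100$, and concatenate the segments $\zeta\to z\to\gamma(\tau)$ with the tail of $\gamma$, arriving at the identical numerical bounds $141/200$ and $143/200$.
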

\begin{proof}
	This proof proceeds similar to that of Lemma \ref{lem: ReduceGeo}, except that we modify the definition of $\tau$.
	Take $\gamma:[0,1]\to D$ to be a piecewise smooth curve with $\gamma(0) = x_0$ and $\gamma(1) = y$ whose length is at most $\dgeo(x_0, y) +  r_\delta/100$.
	Let
	\begin{align}
		\cT \de \inf\Bigl\{t\in [0,1] : \int_0^t \Vert \gamma'(s) \Vert \, \intd s = \int_t^1 \Vert \gamma'(s) \Vert\intd s \Bigr\}.\label{eq:ValidHen}
	\end{align}
	Then, proceeding identically to \eqref{eq: ProofGeo_Deftau}--\eqref{eq:ElegantLion} one can find some $z\in \cD(\delta)$ with $\Vert z - \gamma(\cT) \Vert \leq (3/10)r_\delta$ and $\sB(z,\nu r_\delta) \subseteq \cD(\delta)$ such that every $\zeta \in \sB(z,\nu r_\delta)$ admits a curve $\tilde{\gamma}:[0,1]\to D$ with $\tilde{\gamma}(0) = \zeta$ and $\tilde{\gamma}(1) = z$ satisfying $\int_0^1 \Vert \tilde{\gamma}'(t) \Vert \, \intd t =  \Vert \zeta - z \Vert + \Vert z - \gamma(\cT) \Vert +  \int_\cT^1 \Vert \gamma'(t) \Vert \, \intd t.$
	Here, recalling \eqref{eq:ValidHen} and the assumed upper bounds on the length of $\gamma$,
	\begin{align}
		\int_\cT^1 \Vert \gamma'(t) \Vert \, \intd t = \frac{1}{2}\int_0^1 \Vert \gamma'(t) \Vert \, \intd t \leq \frac{1}{2}\bigl( \dgeo(x_0, y) + r_\delta /100\bigr).
	\end{align}
	Using the assumptions regarding $\dgeo(x_0,y)$ and a direct numerical computation now yields that $\int_0^1 \Vert \tilde{\gamma}'(t) \Vert\, \intd t \leq (4/5)r_\delta$.
	This implies that \eqref{eq:CrimsonEgg} is satisfied.

	The desired bound on $\Vert z-x_0 \Vert$ can be deduced similarly.
	Note that $\Vert z-x_0 \Vert \leq \Vert z - \gamma(\cT) \Vert + \Vert \gamma(\cT) - x_0 \Vert$.
	The first term was bounded by $(3/10)r_\delta$ in the discussion after \eqref{eq:ValidHen}, and the second term is bounded by $\int_0^\cT \Vert \gamma'(t) \Vert \intd t \leq (\dgeo(x_0,y) + r_\delta/100)/2 \leq (2/5 + 1/200)r_\delta$.
	Hence, using that $3/10 + 2/5 + 1/200 \leq 4/5$ yields that $\Vert z-x_0 \Vert \leq (4/5)r_\delta$, as desired.
\end{proof}

Recall from \eqref{eq: Def_Delta} that the parameter $\Delta$ provides an upper bound on the geodesic distance between any two points $x_0,y \in D$.
We can now iteratively decrease the geodesic distance to get the desired estimate:

\begin{corollary}
	\label{cor: TransportDistant}
	There exist absolute constants $\delta_0, \gamma,c,C>0$ such that the following holds for $\delta \leq \delta_0$ and $\varepsilon \leq \gamma r_\delta$.
	Let $T \de (2\lceil 10 \Delta /  r_\delta \rceil + 2) r_\delta^2$.
	Then, for $x_0, y \in \cD(\delta)$,
	\begin{align}
		\bbP(X_T \in \sB(y, \varepsilon) \mid X_0 = x_0) \geq (c  r_\delta \lambda_{\min})^{C\Delta/ r_\delta} (\varepsilon/ r_\delta)^2.\label{eq:UneasySoup}
	\end{align}
\end{corollary}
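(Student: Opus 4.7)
The plan is to chain applications of Corollary \ref{cor: TransportClose} along a piecewise linear proxy for a geodesic from $x_0$ to $y$, using Lemmas \ref{lem: ReduceGeo} and \ref{lem: DoNotGrowGeo} to pick waypoints that lie in $\cD(\delta)$ together with small neighborhoods.

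First, I construct a sequence of waypoints $z_0, z_1, \ldots, z_N \in \cD(\delta)$ and an absolute constant $\nu >0$ such that $\sB(z_j, \nu r_\delta) \subseteq \cD(\delta)$ for every $j$, consecutive waypoints satisfy $\Vert z_{j-1} - z_j \Vert \leq (4/5) r_\delta$, and every $\zeta\in \sB(z_j, \nu r_\delta)$ still lies within $(4/5)r_\delta$ of $z_{j+1}$ in Euclidean distance. Set $z_0 = x_0$. While $\dgeo(z_{j-1}, y) \geq (4/5)r_\delta$, apply Lemma \ref{lem: ReduceGeo} at $z_{j-1}$ to obtain $z_j$; this cuts the geodesic distance to $y$ by at least $r_\delta/10$. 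When $\dgeo(z_{j-1}, y) < (4/5) r_\delta$, apply Lemma \ref{lem: DoNotGrowGeo} instead to keep the geodesic distance bounded by $(4/5)r_\delta$ without increasing it. Since each reduction step subtracts $r_\delta/10$ and $\dgeo(x_0,y)\leq \Delta$, one reaches the second regime within $\lceil 10\Delta/r_\delta\rceil$ steps, so the total number of waypoints satisfies $N \leq \lceil 10\Delta/r_\delta\rceil + 1$. A small trimming of the $(4/5)$-radius in the lemma statements (allowed since the constants in Lemmas \ref{lem: ReduceGeo} and \ref{lem: DoNotGrowGeo} have slack) is used to absorb the $\nu r_\delta$ overshoot when the process is not exactly at a waypoint.

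Second, I iterate Corollary \ref{cor: TransportClose} along these waypoints using the Markov property. Given $X_{(j-1)t_\delta}\in \sB(z_{j-1}, \nu r_\delta) \subseteq \cD(\delta)$ and the above bound $\Vert X_{(j-1)t_\delta} - z_j \Vert \leq (4/5) r_\delta$, Corollary \ref{cor: TransportClose} applied with target $z_j$ and radius $\nu r_\delta$ yields
\begin{align*}
\bbP\bigl(X_{j t_\delta} \in \sB(z_j, \nu r_\delta) \;\big|\; X_{(j-1)t_\delta}\bigr) \geq c_1 r_\delta \lambda_{\min} \nu^2
\end{align*}
for an absolute $c_1>0$, uniformly over the starting point in the conditioning event. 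For the final step, apply Corollary \ref{cor: TransportClose} to the pair $(X_{N t_\delta}, y)$ with the true target radius $\varepsilon \leq \gamma r_\delta$, contributing a factor $c_1 r_\delta \lambda_{\min} (\varepsilon/r_\delta)^2$. Multiplying the $N$ conditional probabilities and the final one by the tower property gives
\begin{align*}
\bbP(X_T \in \sB(y, \varepsilon) \mid X_0 = x_0) \geq (c_1 \nu^2 r_\delta \lambda_{\min})^{N}\, c_1 r_\delta \lambda_{\min} (\varepsilon/r_\delta)^2,
\end{align*}
and $T = (N+1) t_\delta \leq (2\lceil 10\Delta/r_\delta\rceil + 2) r_\delta^2$ matches the statement. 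Absorbing the absolute constant $\nu^2$ into the base and using $N+1 \leq C\Delta/r_\delta$ (for a suitable absolute $C$, provided $\Delta/r_\delta \geq 1$; the degenerate regime $\Delta \lesssim r_\delta$ can be handled directly by a single application of Corollary \ref{cor: TransportClose}) yields the claimed exponent.

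The main obstacle is the bookkeeping around the fact that the process at time $j t_\delta$ is not exactly at the waypoint $z_j$ but somewhere in $\sB(z_j, \nu r_\delta)$. Corollary \ref{cor: TransportClose} requires both endpoints in $\cD(\delta)$ and at distance $\leq (4/5) r_\delta$; the waypoint construction via Lemmas \ref{lem: ReduceGeo} and \ref{lem: DoNotGrowGeo} is precisely engineered to supply the containment $\sB(z_j, \nu r_\delta)\subseteq \cD(\delta)$, so that any realization of $X_{j t_\delta}$ inside this neighborhood is an admissible starting point for the next step. The small mismatch between $(4/5)r_\delta$ and $(4/5)r_\delta + \nu r_\delta$ in the Euclidean distance bound is absorbed by choosing $\nu$ small and, if necessary, slightly shrinking the $(4/5)$ threshold used in the waypoint construction, at the cost of only enlarging the absolute constants $c, C$.
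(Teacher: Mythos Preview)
Your approach is essentially the same as the paper's: chain Corollary~\ref{cor: TransportClose} along waypoints supplied by Lemmas~\ref{lem: ReduceGeo} and~\ref{lem: DoNotGrowGeo}, picking up a factor $c r_\delta \lambda_{\min}$ per step. Two bookkeeping points deserve cleanup. First, the paper avoids your overshoot issue by not fixing waypoints in advance: it packages the iteration as a recursion on
\[
P(t,d) \de \inf\bigl\{\bbP(X_t \in \sB(y,\varepsilon)\mid X_0 = x_0): x_0,y\in\cD(\delta),\ \dgeo(x_0,y)\le d\bigr\},
\]
so that at each stage Lemma~\ref{lem: ReduceGeo} is applied at the \emph{realized} position $X_{2r_\delta^2}$ rather than at a precomputed $z_{j-1}$; this makes the constraint $\Vert\,\cdot - z_j\Vert\le (4/5)r_\delta$ hold exactly and removes the need to renegotiate constants in the lemmas. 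Second, your line ``$T=(N+1)t_\delta\le(2\lceil 10\Delta/r_\delta\rceil+2)r_\delta^2$ matches the statement'' is off: the statement fixes $T$ \emph{equal} to that value, so if the geodesic regime is reached early you must pad with repeated applications of Lemma~\ref{lem: DoNotGrowGeo} to burn the remaining time (this is what the $\max\{d-r_\delta/10,(4/5)r_\delta\}$ in the paper's recursion accomplishes). With those fixes your argument goes through.
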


\begin{proof}
	For $d\geq 0$ and $t>0$, set
	\begin{equation}
		P(t,d) \de
		\inf\bigl\{\bbP\bigl(X_t \in \sB(y,\varepsilon)\mid X_0 = x_0 \bigr): x_0,y \in \cD(\delta), \dgeo(x_0,y) \leq d \bigr\}.\label{eq:DrySwan}
	\end{equation}
	If $\dgeo(x_0,y) \geq (4/5)r_\delta$, then combining Lemma \ref{lem: ReduceGeo} and Corollary \ref{cor: TransportClose} implies that $X_{2r_\delta^2}$ will reduce the geodesic distance to $y$ by at least $r_\delta/10$ with probability $\geq c r_\delta \lambda_{\min}$ for some absolute constant $c>0$.
	Similarly, if $\dgeo(x_0,y) \leq (4/5)r_\delta$, then combining Lemma \ref{lem: DoNotGrowGeo} and Corollary \ref{cor: TransportClose} yields that $X_{2r_\delta^2}$ is again at geodesic distance $\leq (4/5)r_\delta$ from $y$ with probability $\geq c r_\delta \lambda_{\min}$.
	Hence, by the law of total probability, it holds for $d\geq 0$ and $t\geq 2r_\delta^2$ that
	\begin{align}
		P(t,d)
		\geq
		c_1 r_\delta \lambda_{\min} P\bigl(t- 2 r_\delta^2, \max\{d- r_\delta/10, (4/5)r_\delta \}\bigr).
		\label{eq:SilentTin}
	\end{align}
	Recall the definition of $T$ and iteratively apply \eqref{eq:SilentTin} to find that
	\begin{align}
		P(T, \Delta) \geq (c  r_\delta \lambda_{\min})^{\lceil 10\Delta / r_{\delta} \rceil } P\bigl(2 r_\delta^2, (4/5) r_\delta \bigr).\label{eq:JumpyRoad}
	\end{align}
	Further, using Corollary \ref{cor: TransportClose} and the fact that $\Vert x_0 - y \Vert \leq (4/5)r_\delta$ whenever $\dgeo(x_0,y) \leq (4/5)r_\delta$, it holds that $P\bigl(2 r_\delta^2, (4/5) r_\delta \bigr) \geq c  r_\delta \lambda_{\min} (\varepsilon /  r_\delta)^2$.

	It can be shown using Lemma \ref{lem: BarrierLocallyStraight} that the diameter of $D$ is at least $r_\delta$ if $\delta$ is taken sufficiently small.
	Consequently, $\lceil 10 \Delta/ r_\delta \rceil + 1 \leq C \Delta/r_\delta$ for some sufficiently large constant $C>0$.
	Then, also using that $ r_\delta \lambda_{\min}\leq \delta\lambda_{\min}/\lambda_{\max} \leq 1$,
	\begin{align}
		P(T, \Delta)\geq (c  r_\delta \lambda_{\min})^{\lceil 10\Delta / r_{\delta} \rceil +1}(\varepsilon/ r_\delta)^2 \geq  (c  r_\delta \lambda_{\min})^{C \Delta / r_{\delta} } (\varepsilon/ r_\delta)^2.\label{eq:NormalLeaf}
	\end{align}
	The fact that $\dgeo(x_0,y) \leq \Delta$ for all $x_0,y\in D$ now yields \eqref{eq:UneasySoup}.
\end{proof}

\subsection{Points which could be close to the barriers}\label{sec: ReductionClose}
Finally, we consider the case where $x_0$ or $y$ is potentially close to a barrier.
We start by a reduction argument showing that the case where $x_0$ is close to a barrier follows from the previously established estimates with $x_0$ distant from the barrier.
This reduction exploits \cite[Lemma 4.8]{vanwerde2024recovering} which states that $X_t$ is typically at distance of order $\geq \sqrt{t}$ from the barriers for small $t$:

\begin{lemma}\label{lem: x0_reduction_distant}
	There exist absolute constants $\delta_0,c>0$ such that the following holds for $\delta \leq \delta_0$.
	For every $x_0\in D$, measurable $E\subseteq D$, and $t \geq c\min\{ 1/\kappa^2, 1/\lambda_{\max}^2,\rho^2 \}$,
	\begin{equation}
		\bbP\bigl(X_t \in E \mid X_0 = x_0\bigr)
		\geq
		\frac{1}{2}
		\sup_{u}\inf_{x_0' \in \cD(\delta)}\bbP\bigl(X_{u} \in E \mid X_0 = x_0'\bigr)
		.
		\label{eq:MadFog}
	\end{equation}
	Here, the supremum runs over $u \leq  t - c\min\{ 1/\kappa^2, 1/\lambda_{\max}^2,\rho^2\}$.
\end{lemma}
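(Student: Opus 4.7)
The plan is to let the process run for a short burn-in period of length $s$ of order $\min\{1/\kappa^2, 1/\lambda_{\max}^2, \rho^2\}$, so that $X_s$ moves away from the barriers with probability at least $1/2$, and then to apply the Markov property to reduce to an initial point in $\cD(\delta)$. Concretely, I would set $s \de c\min\{1/\kappa^2, 1/\lambda_{\max}^2, \rho^2\}$ for a constant $c>0$ to be chosen, and take $u \de t - s$, which is non-negative by the hypothesis on $t$. It then suffices to establish
\[
\bbP(X_t\in E \mid X_0 = x_0) \geq \tfrac{1}{2} \inf_{x_0'\in\cD(\delta)} \bbP(X_u\in E \mid X_0 = x_0'),
\]
since taking the supremum over admissible $u$ on the right-hand side then recovers \eqref{eq:MadFog}.

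The first step is to invoke Lemma~4.8 of \cite{vanwerde2024recovering}, which asserts that $X_s$ lies at distance of order $\sqrt{s}$ from every barrier with probability at least $1/2$, uniformly in $x_0\in D$. Comparing with the definition \eqref{eq: Def_R_eps}--\eqref{eq: Def_Deps} of $\cD(\delta)$, one tunes $c$ so that this distance exceeds $r_\delta/5$, yielding $\bbP(X_s \in \cD(\delta) \mid X_0 = x_0) \geq 1/2$ uniformly in the initial condition $x_0$.

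The second step applies the Markov property of the joint state process $(X_\cdot, (s_i(\Lc{i}_\cdot))_i)$. Conditioning on $\{X_s \in \cD(\delta)\}$ and then lower-bounding the conditional probability $\bbP(X_{s+u}\in E \mid X_s = x_0')$ by $\inf_{x_0''\in\cD(\delta)}\bbP(X_u \in E \mid X_0 = x_0'')$ produces the desired estimate after multiplication by the $1/2$ obtained above. The one subtlety is that the sign processes $s_i$ advance in the local-time clock, so in principle the value $s_i(\Lc{i}_s)$ carries residual randomness that does not appear in the reference process started fresh at $x_0'$. However, on the event $\{X_s \in \cD(\delta)\}$ the point $X_s$ sits at strictly positive distance from every barrier, so item \ref{item: Def_ReflectedBrownianMotion_iii} of Definition~\ref{def: ReflectedBrownianMotion} pins $s_i(\Lc{i}_s)$ to the unique side of $B_i$ occupied by $X_s$, matching the reference initial data.

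I expect the main bookkeeping obstacle to be ensuring that the constant $c$ supplied by Lemma~4.8 of \cite{vanwerde2024recovering} is compatible with the threshold $r_\delta/5$ entering the definition of $\cD(\delta)$, and that the probability threshold coming out of that lemma can be arranged to be at least $1/2$; both are routine tuning of absolute constants, and no new probabilistic input beyond the cited lemma and the Markov property is required.
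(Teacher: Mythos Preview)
Your overall strategy matches the paper's: invoke \cite[Lemma~4.8]{vanwerde2024recovering} to ensure $X$ has moved away from the barriers after a short burn-in, then apply the Markov property. Your treatment of the sign processes is also correct: on $\{X_s\in\cD(\delta)\}$ the point $X_s$ lies strictly off every barrier, so item~\ref{item: Def_ReflectedBrownianMotion_iii} (via its contrapositive) pins $s_i(\Lc{i}_s)$, and memorylessness of the on--off chain then makes the restarted process agree in law with a fresh copy from $x_0'$.

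There is, however, a logical slip in your handling of the supremum over $u$. You fix $s = c\min\{1/\kappa^2, 1/\lambda_{\max}^2, \rho^2\}$ and set $u \de t - s$, so you establish the displayed inequality only for this single value of $u$ (the maximal admissible one). The sentence ``taking the supremum over admissible $u$ on the right-hand side then recovers \eqref{eq:MadFog}'' goes the wrong way: from $\bbP(X_t\in E\mid X_0=x_0) \geq \tfrac12\inf_{x_0'}\bbP(X_{u_0}\in E\mid X_0=x_0')$ for one $u_0$ you cannot deduce the same bound with $\sup_u$ on the right, since the supremum may be attained at some other $u$. And the full range of $u$ really is used downstream: in Corollary~\ref{cor: DoeblinHolds} the lemma is applied with $u=T_1$ fixed while $t=T_2$ varies over all $T_2\geq T_1 + c_5\min\{1/\kappa^2,1/\lambda_{\max}^2,\rho^2\}$, so $u$ need not equal $t-s$.

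The fix is exactly what the paper does. For an arbitrary admissible $u\leq t - t'$ with $t' \de c\min\{1/\kappa^2,1/\lambda_{\max}^2,\rho^2\}$, condition at the two intermediate times $t-t'-u$ and $t-u$: applying \cite[Lemma~4.8]{vanwerde2024recovering} from the (random) state $X_{t-t'-u}$ over the interval of length $t'$ gives $\bbP(X_{t-u}\in\cD(\delta)\mid X_{t-t'-u})\geq 1/2$ uniformly, after which your second step goes through unchanged. Equivalently, you could first note that $\bbP(X_s\in\cD(\delta)\mid X_0=x_0)\geq 1/2$ holds for every $s\geq t'$ (not just $s=t'$), by conditioning on $X_{s-t'}$; this lets you take $s=t-u$ for each admissible $u$ and then argue exactly as you proposed.
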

\begin{proof}
	The idea is to condition on some intermediate time and to exploit that the process will then be at a reasonable distance from every barrier, even if $x_0$ lies close to some barrier.
	More precisely, \cite[Lemma 4.8]{vanwerde2024recovering} yields absolute constants $c_1,c_2>0$ such that for every $t' \leq c_1\min\{1/\kappa^2, 1/\lambda_{\max}^2,\rho^2  \}$,
	\begin{align}
		\bbP\bigl(\forall y \in \cup_{i=0}^m B_i : \Vert X_{t'} -y\Vert \geq c_2 \sqrt{t'} \mid X_0 = x_0\bigr) \geq 1/2\ \text{ for every }\ x_0\in D.\label{eq:SilentQuote}
	\end{align}
	Recall the definitions of $r_\delta$ and $\cD(\delta)$ from \eqref{eq: Def_R_eps} and \eqref{eq: Def_Deps}.
	Hence, taking $\delta_0 \de 5 c_2 \sqrt{c_1}$, the event described in \eqref{eq:SilentQuote} means that $X_{t'} \in \cD(\delta_0)$.
	Then also $X_{t'} \in  \cD(\delta)$ for every $\delta \leq \delta_0$.
	From here on, let $c \de c_1$ and $t' \de c\min\{1/\kappa^2, 1/\lambda_{\max}^2,\rho^2  \}$.

	For every $u \leq t - t'$, the law of total probability yields that
	\begin{align}
		\bbP\bigl(X_t \in E \mid X_0 = x_0\bigr)  = \bbE\bigl[\bbE\bigl[\bbP\bigl( X_t \in E  \mid X_{t - u}\bigr)  \mid X_{t - t' - u}\bigr] \mid X_0 = x_0\bigr].
	\end{align}
	The desired result in \eqref{eq:MadFog} now follows by Markovianity since \eqref{eq:SilentQuote} and the subsequent discussion give that $X_{t-u}\in \cD(\delta)$ with probability $\geq 1/2$.
\end{proof}

The reduction for $y$ is more delicate.
In particular, unlike most of the previous arguments, a direct approach based on local approximations is not suitable here.
This problem stems from the fact that we desire an estimate on the probability to land in an \emph{arbitrarily small} target set.
This makes it easy for error terms in local approximations to exceed the size of the target, leading to trivial estimates.

Fortunately, a trick can be used circumvent this difficulty.
The following result allows us to effectively reverse the arrow of time, after which we can proceed similarly to Lemma \ref{lem: x0_reduction_distant}:
\begin{lemma}\label{lem: TrickTimeRev}
	Suppose that $X_0 \sim \Unif(D)$.
	Then, for every $t>0$ and every measurable $E_1,E_2 \subseteq D$ with nonzero measure,
	\begin{align}
		\bbP\Bigl(X_t \in E_2 {} & {}\mid X_0 \in E_1\Bigr)                                                                                                                       \\
		                         & \geq \frac{\Area(E_2)}{\Area(E_1)} \bbP\Bigl(X_t \in E_1, s_i(\Lc{i}_r) = s_i(0), \forall r \leq t, i\leq m  \mid X_0 \in E_2\Bigr). \nonumber
	\end{align}
\end{lemma}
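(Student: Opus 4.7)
The plan is to identify the snapping-out process $X$ with a classical reflected Brownian motion on the event
\[
A \de \{s_i(\Lc{i}_r) = s_i(0) \text{ for all } r \leq t, \ i \leq m \},
\]
and then exploit the reversibility of the latter. On $A$, every sign process $s_i(\Lc{i}_\cdot)$ stays at its initial value, so the drift in Definition \ref{def: ReflectedBrownianMotion}(i) reduces to that of a classical reflected Brownian motion $\tilde X$ that treats each barrier $B_i$ as impermeable. Let $\tilde X$ be driven by the same Wiener process $W$ as $X$, write $\tiLc{i}_t$ for its local time at $B_i$, and set $r_i(x) \de \lambda_i^-$ if $x$ lies on the positive side of $B_i$ and $r_i(x) \de \lambda_i^+$ otherwise. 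Coupling the $s_i$'s via independent exponential clocks $\cE_1, \ldots, \cE_m$ with rates $r_i(X_0)$ that trigger sign flips, and invoking pathwise uniqueness from \cite{vanwerde2024recovering}, one has $X_r = \tilde X_r$ and $\Lc{i}_r = \tiLc{i}_r$ for all $r \leq t$ whenever $A$ holds, while $A = \{\tiLc{i}_t < \cE_i : i \leq m\}$. Integrating out the clocks conditional on $\tilde X$ yields, for every measurable $E \subseteq D$ and a.e.\ $x \in D$,
\[
\bbP(X_t \in E, A \mid X_0 = x) = \bbE_x\biggl[\bb1\{\tilde X_t \in E\} \prod_{i=1}^m \exp\bigl(-r_i(x)\, \tiLc{i}_t\bigr)\biggr].
\]

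Next, I would use that $\tilde X$ is reversible with respect to Lebesgue measure on $D$, since on each connected component of $D \setminus \cup_{i\geq 1} B_i$ it is a classical reflected Brownian motion with Neumann boundary conditions. Since $\tilde X$ stays on the same side of every $B_i$ as its starting point, $r_i(\tilde X_0) = r_i(\tilde X_t)$ almost surely, so the integrand above depends on the path of $\tilde X$ only through its endpoints and the total local times---quantities that are invariant under time-reversal. Reversibility then yields
\[
\int_{E_2} \bbP(X_t \in E_1, A \mid X_0 = x)\, \intd x = \int_{E_1} \bbP(X_t \in E_2, A \mid X_0 = x)\, \intd x.
\]
Since $X_0 \sim \Unif(D)$ has conditional density $\bb1_E/\Area(E)$ given $\{X_0 \in E\}$, this identity rewrites as $\Area(E_2)\bbP(X_t \in E_1, A \mid X_0 \in E_2) = \Area(E_1)\bbP(X_t \in E_2, A \mid X_0 \in E_1)$; dividing by $\Area(E_1)$ and bounding $\bbP(X_t \in E_2, A \mid X_0 \in E_1) \leq \bbP(X_t \in E_2 \mid X_0 \in E_1)$ gives the claim.

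The main obstacle is the first step---the exponential-weight representation for $\bbP(X_t \in E, A \mid X_0 = x)$---since it requires a careful pathwise identification of the snapping-out dynamics with a reflected Brownian motion driven by the same Wiener process and coupled to independent exponential flip clocks. Once this representation is available, the reversibility step is standard because the integrand only involves symmetric path functionals (endpoints and total local times).
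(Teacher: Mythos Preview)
Your proposal is correct and follows essentially the same route as the paper: couple $X$ with a classical reflected Brownian motion on the no-flip event $A$, then use reversibility of the latter under the uniform initial law together with the time-reversal invariance of the total local times and of the component containing the endpoints. The only difference is organizational---you first integrate out the sign processes to obtain the exponential-weight representation $\bbE_x\bigl[\bb1\{\tilde X_t\in E\}\prod_i \exp(-r_i(x)\,\tiLc{i}_t)\bigr]$, whereas the paper keeps the $s_i$'s inside the probability and applies the time reversal directly to the event $\{s_i(\int_0^r \bb1\{Y_z\in B_i\}\,\intd\cL_z)=s_i(0),\ \forall r\leq t,\ i\leq m\}$; this sidesteps the pathwise-coupling step you flag as the main obstacle, since no explicit formula for $\bbP(A\mid \text{path})$ is ever needed.
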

\begin{proof}
	Denote $C_0$ for the closure of the component of $D\setminus (\cup_{i=0}^m B_i)$ associated with the initial conditions $X_0$ and $s_i(0)$.
	Let $Y_t$ be a classical reflected Brownian motion in $C_0$ with local time $\cL_t$ which has initial condition $X_0$ and is driven by the same Wiener process $W_t$.
	That is, the processes satisfying
	\begin{align}
		\intd Y_t = \intd W_t + \sum_{i=0}^m s_i(0) \vec{n}_i(Y_t) \bb1\{Y_t \in B_i \}\intd \cL_t\label{eq:NoisyFox}
	\end{align}
	subject to the usual characterizing conditions\footnote{That is, $Y_t$ is continuous with values in $C_0$, and $\cL_t$ is a continuous nondecreasing process with $\cL_0 = 0$ with values in $\bbR$ which only increases when $Y_t\in \partial C_0$.}.
	Note that, different from Definition \ref{def: ReflectedBrownianMotion}, the coefficients $s_i(0)$ in \eqref{eq:NoisyFox} are kept fixed and will not ever change sign.

	Observe that $X_t = Y_t$ and $\Lc{i}_t = \int_0^t \bb1\{Y_r \in B_i \}\intd\cL_r$ for all $t$ that satisfy $s_i(\Lc{i}_r) = s_i(0)$ for every $r \leq t$.
	In particular,
	\begin{align}
		\bbP\Bigl(X_t \in{} & {} E_2, X_0 \in E_1, s_i(\Lc{i}_r) = s_i(0), \forall r \leq t, i\leq m\Bigr)\label{eq:FairRiver}                                                     \\
		                    & = \bbP\Bigl(Y_t \in E_2, Y_0 \in E_1, s_i({\textstyle \int_0^r}\bb1\{Y_z \in B_i \} \intd \cL_z) = s_i(0), \forall r \leq t, i\leq m\Bigr).\nonumber
	\end{align}
	The key idea, which we next explain in detail, is to exploit the time reversibility of classical reflected Brownian motion to rewrite the right-hand side of \eqref{eq:FairRiver}.

	Define $\tiY_r \de Y_{t-r}$ and $\tilde{\cL}_r \de \cL_t - \cL_{t-r}$ for $r \leq t$.
	Then, by definition,
	\begin{align}
		\bbP\Bigl(Y_t \in {} & {} E_2, Y_0 \in E_1, s_i({\textstyle \int_0^r}\bb1\{Y_z \in B_i \} \intd \cL_z) = s_i(0), \forall r \leq t, i\leq m\Bigr)\label{eq:OccultZebra}                      \\
		                     & =\bbP\Bigl(\tiY_0 \in E_2, \tiY_t \in E_1, s_i({\textstyle \int_0^r}\bb1\{\tiY_z \in B_i \} \intd \tilde{\cL}_z) = s_i(0), \forall r \leq t, i\leq m\Bigr).\nonumber
	\end{align}
	Remark that the process defined by $\tiW_r \de W_t - W_{t-r}$ is again a Wiener process and note that \eqref{eq:NoisyFox} and the associated conditions imply that $\tiY_r$ satisfies
	\begin{align}
		\intd \tiY_r = \intd \tiW_r + \sum_{i=0}^m s_i(0)\vec{n}_i(\tiY_r) \bb1\{\tiY_r \in B_i \}\intd \tilde{\cL}_r\label{eq:JollyUrn}
	\end{align}
	subject to the usual conditions.

	In other words, $\tiY_r$ is a classical reflected Brownian motion with initial condition $\tiY_0 = Y_t$ and local time $\tilde{\cL}_r$ in the random component $C_0$.
	Since the uniform distribution is the stationary distribution for classical reflected Brownian motion, the random variable $\tiY_0 = Y_t$ conditional on the component $C_0$ is again uniform on that component.
	Hence, averaging out the dependence on the random component, the random variable $\tiY_0$ is uniform on $D$.

	So, the processes $Y$ and $\tiY$ are both classical reflected Brownian motions in a random component, started from the uniform distribution on $D$.
	It follows that the processes and their local times have the same law.
	In particular,
	\begin{align}
		\bbP\Bigl(\tiY_0 \in {} & {}E_2, \tiY_t \in E_1, s_i({\textstyle \int_0^r}\bb1\{\tiY_z \in B_i \} \intd \tilde{\cL}_z) = s_i(0), \forall r \leq t, i\leq m\Bigr)\label{eq:SlowMob} \\
		                        & = \bbP\Bigl(Y_0 \in E_2, Y_t \in E_1, s_i({\textstyle \int_0^r}\bb1\{Y_z \in B_i \} \intd \cL_z) = s_i(0), \forall r \leq t, i\leq m\Bigr).\nonumber
	\end{align}
	Now, by \eqref{eq:FairRiver}--\eqref{eq:SlowMob} and the definition of conditional probability,
	\begin{align}
		\bbP\Bigl(X_t \in E_2, {} & {} s_i(\Lc{i}_r) = s_i(0), \forall r \leq t, i\leq k \mid X_0 \in E_1\Bigr) \bbP\Bigl(X_0 \in E_1\Bigr)\label{eq:RedKey}                \\
		                          & = \bbP\Bigl(X_t \in E_1,  s_i(\Lc{i}_r) = s_i(0), \forall r \leq t, i\leq k\mid X_0 \in E_2\Bigr)\bbP\Bigl(X_0 \in E_2\Bigr). \nonumber
	\end{align}
	Here, by the monotonicity of probability,
	\begin{equation}
		\bbP\Bigl(X_t \in E_2 \mid X_0 \in E_1\Bigr) \geq \bbP\Bigl(X_t \in E_2, s_i(\Lc{i}_r) = s_i(0), \forall r \leq t, \forall i\leq k \mid X_0 \in E_1\Bigr). \label{eq:UsefulWisp}
	\end{equation}
	Combine \eqref{eq:RedKey} with \eqref{eq:UsefulWisp} and use that $\bbP(X_0 \in E) = \Area(E)/\Area(D)$ by $X_0$ being uniform on $D$ to conclude the proof.
\end{proof}

\begin{corollary}\label{cor: yreduction}
	For every $\delta_0 >0$ there exist $\delta \leq \delta_0$ and $c>0$ such that the following holds for every $x_0\in D$.
	Suppose that one is given $T,C,\gamma>0$ such that for every $\varepsilon \leq \gamma r_\delta$,
	\begin{align}
		\bbP(X_T \in \sB(y,\varepsilon) \mid X_0 = x_0)\geq C \Area(\sB(y,\varepsilon))  \text{ for every } y \in \cD(\delta). \label{eq:BoldCamel}
	\end{align}
	Then, for $T' \de T + c \min\{1/\kappa^2, 1/\lambda_{\max}^2, \rho^2 \}$ and every measurable $E\subseteq D$,
	\begin{align}
		\bbP(X_{T'} \in E \mid X_0 = x_0)\geq (C/2)\Area(E).\label{eq:WittyDog}
	\end{align}
\end{corollary}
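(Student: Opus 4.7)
The plan is to combine the Markov property at time $T$ with the time-reversibility trick in Lemma \ref{lem: TrickTimeRev} in order to exchange the difficult situation where the target set $E$ may lie near a barrier for the easier situation where it is the source that sits near a barrier; the latter is already controlled via Lemma~4.8 of \cite{vanwerde2024recovering}, much as in the proof of Lemma \ref{lem: x0_reduction_distant}.

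First, extend the hypothesis \eqref{eq:BoldCamel} from balls to arbitrary measurable subsets of $\cD(\delta)$. Since any measurable $A \subseteq \cD(\delta)$ can be approximated arbitrarily well by disjoint balls centered in $\cD(\delta)$ with radii $\leq \gamma r_\delta$, applying \eqref{eq:BoldCamel} to each such ball and summing yields $\bbP(X_T \in A \mid X_0 = x_0) \geq C\, \Area(A)$. Setting $t' \de T' - T$ and applying the Markov property at time $T$ then gives
\begin{align*}
\bbP(X_{T'} \in E \mid X_0 = x_0) \geq C \int_{\cD(\delta)} \bbP(X_{t'} \in E \mid X_0 = y)\, \intd y,
\end{align*}
so it suffices to prove that the integral on the right is at least $\Area(E)/2$.

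To that end, consider the auxiliary initial distribution $X_0 \sim \Unif(D)$. Since the conditional law of $X_0$ given $\{X_0 \in \cD(\delta)\}$ is then uniform on $\cD(\delta)$, one has $\int_{\cD(\delta)} \bbP(X_{t'} \in E \mid X_0 = y)\, \intd y = \Area(\cD(\delta))\, \bbP(X_{t'} \in E \mid X_0 \in \cD(\delta))$. Applying Lemma \ref{lem: TrickTimeRev} with $E_1 = \cD(\delta)$ and $E_2 = E$ then converts this to
\begin{align*}
\int_{\cD(\delta)} \bbP(X_{t'} \in E \mid X_0 = y)\, \intd y \geq \Area(E)\, \bbP\bigl(X_{t'} \in \cD(\delta),\, \cF \,\big|\, X_0 \in E\bigr),
\end{align*}
where $\cF$ denotes the event that $s_i(\Lc{i}_r) = s_i(0)$ for all $r \leq t'$ and all $i \leq m$.

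It therefore remains to show $\bbP(X_{t'} \in \cD(\delta),\, \cF \mid X_0 = x) \geq 1/2$ uniformly in $x \in D$, since averaging this over $x$ uniform on $E$ yields the desired bound. Lemma~4.8 of \cite{vanwerde2024recovering} supplies $\bbP(X_{t'} \in \cD(\delta) \mid X_0 = x) \geq 3/4$ once $c$ is chosen sufficiently small, just as in Lemma \ref{lem: x0_reduction_distant}. For the no-flip event $\cF$, the independence of the chains $s_i$ from the driving Wiener process together with short-time estimates on the local times $\Lc{i}_{t'}$ (which scale like $\sqrt{t'}$) give a lower bound on $\bbP(\cF \mid X_0 = x)$ that can be pushed above $3/4$ by further shrinking $c$; a union bound then yields the $1/2$ estimate. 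The main obstacle is controlling $\cF$ when $x$ lies on a barrier, so that local time starts accumulating immediately; this requires a short-time local-time bound that is uniform in $x$ and is what forces the specific scaling $t' \propto \min\{1/\kappa^2, 1/\lambda_{\max}^2, \rho^2\}$.
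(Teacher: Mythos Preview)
Your proposal is correct and follows essentially the same route as the paper: extend \eqref{eq:BoldCamel} from balls to measurable subsets of $\cD(\delta)$, apply the Markov property at time $T$, invoke Lemma~\ref{lem: TrickTimeRev} with $E_1 = \cD(\delta)$ and $E_2 = E$ to swap source and target, and then use short-time estimates to bound the resulting probability below by $1/2$. The only place where the paper is more explicit is the no-flip event $\cF$: rather than sketching a local-time argument, it directly cites \cite[Lemma~4.7]{vanwerde2024recovering}, which gives the uniform bound $\bbP(\cF \mid X_0 = x) \geq 3/4$ for $t' \leq c_3 \min\{1/\kappa^2, 1/\lambda_{\max}^2, \rho^2\}$ and thereby also fixes the choice of $\delta$ (and hence ensures $\delta \leq \delta_0$) via the relation $r_\delta/5 \leq c_2\sqrt{t'}$.
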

\begin{proof}
	Let $Q(\cdot) \de \Unif(\cD(\delta))$ be the uniform probability measure on $\cD(\delta)$.
	Standard measure-theoretic arguments\footnote{Using twice that Borel probability measures are outer-regular, it suffices to prove that \eqref{eq:TediousFox} holds for open sets.
		Further, if $A\subseteq \cD(\varepsilon)$ is an open set, then the Vitali covering theorem implies that there exists a countable disjoint collection of balls $B_i \subseteq A$ with $\Area(A \setminus \cup_{i=1}^\infty B_i) = 0$.
		Applying \eqref{eq:BoldCamel} to the $B_i$ then yields \eqref{eq:TediousFox} for the open set $A$.} imply that \eqref{eq:BoldCamel} being valid for open balls implies that a similar inequality holds for general measurable sets:
	\begin{equation}
		\bbP(X_T \in A  \mid X_0 = x_0) \geq C\Area(\cD(\delta)) Q(A),\ \forall\, \text{measurable }\, A\subseteq \cD(\delta). \label{eq:TediousFox}
	\end{equation}
	It was here used that $Q(A) = \Area(A)/\Area(\cD(\delta))$, by definition.
	It follows from \eqref{eq:TediousFox} that for every measurable function $f:\cD(\delta) \to \bbR_{\geq 0}$,
	\begin{align}
		\bbE\bigl[f(X_T) \mid X_0 = x_0\bigr] \geq C \Area(\cD(\delta)) \bbE\bigl[f(Y)\mid Y \sim \Unif(\cD(\delta)) \bigr].  \label{eq:NormalCobra}
	\end{align}
	Let $f(x) \de \bbP(X_{T' - T}\in E \mid X_0 = x )$ and note that we can alternatively write $f(x) = \bbP(X_{T'}\in E \mid X_T = x )$.
	Substituting the definition of $f$ on the right-hand side of \eqref{eq:NormalCobra} and the alternative expression on the left-hand side, and using the law of total probability to simplify the left-hand side, we find
	\begin{equation}
		\bbP(X_{T'} \in E \mid X_0 = x_0)  \geq C\Area(\cD(\delta)) \bbP\bigl(X_{T' - T} \in E \mid X_0 \sim \Unif(\cD(\delta))\bigr).\nonumber
	\end{equation}
	Consequently, using Lemma \ref{lem: TrickTimeRev} with $E_1 \de \cD(\delta)$ and $E_2 \de E$,
	\begin{align}
		\bbP{} & {}(X_{T'} \in  E \mid X_0 = x_0) \label{eq:RoyalFan}                                                                                     \\
		       & \geq C \Area(E) \bbP(X_{T' - T} \in \cD(\delta),\, s_i(\Lc{i}_r) = s_i(0),\, \forall r \leq t, i\leq m \mid X_0 \sim \Unif(E)).\nonumber
	\end{align}
	It remains to show that the probability occurring on the right-hand side of \eqref{eq:RoyalFan} can be made $\geq 1/2$ by choosing $\delta$ and $c$ appropriately.

	Indeed, exactly as in \eqref{eq:SilentQuote}, it follows from \cite[Lemma 4.8]{vanwerde2024recovering} that there exist absolute constants $c_1,c_2>0$ such that for every $t \leq c_1\min\{1/\kappa^2, 1/\lambda_{\max}^2,\rho^2  \}$,
	\begin{align}
		\bbP\bigl(\forall y \in \cup_{i=0}^m B_i : \Vert X_{t} -y\Vert \geq c_2 \sqrt{t} \mid X_0 = x_0\bigr) \geq 1 -  1/4,\ \forall x_0\in D.\label{eq:SilentQuote2}
	\end{align}
	In particular, recalling the definition of $\cD(\delta)$ from \eqref{eq: Def_Deps}, it holds on the event in \eqref{eq:SilentQuote2} that $X_t\in \cD(\delta)$ if $\delta$ is such that $r_\delta /5 \leq  c_2 \sqrt{t}$.
	Further, \cite[Lemma 4.7]{vanwerde2024recovering} yields an absolute constant $c_3>0$ such that for every $t \leq c_3\min\{1/\kappa^2, 1/\lambda_{\max}^2,\rho^2 \}$,
	\begin{align}
		\bbP\bigl(s_i(\Lc{i}_t) = s_i(0), \, \forall i \leq m  \mid X_0 = x_0\bigr) \geq 1 - 1/4,\ \forall x_0 \in D. \label{eq:PinkGym}
	\end{align}
	Let $c \de \min\{c_1,c_3 \}$ and let $\delta \de 5 c_2 \sqrt{c}$.
	Then, recalling that $T' = T + c\min\{1/\kappa^2, 1/\lambda_{\max}^2, \rho^2 \}$ and combining \eqref{eq:SilentQuote2}--\eqref{eq:PinkGym},
	\begin{align}
		\bbP(X_{T' - T} \in \cD(\delta),\, s_i(\Lc{i}_r) = s_i(0),\, \forall r \leq t, i\leq m \mid X_0 \sim \Unif(E)) \geq 1/2. \label{eq:ZippyYawn}
	\end{align}
	Combine \eqref{eq:RoyalFan} and \eqref{eq:ZippyYawn} to conclude that \eqref{eq:WittyDog} holds, as desired.
\end{proof}
It remains to combine the foregoing estimates and to simplify the results into a more readable form.
This amounts to straightforward but notationally cumbersome computations.
\begin{corollary}\label{cor: DoeblinHolds}
	There exist absolute constants $c_1,c_2>0$ such that the following holds.
	Denote $R \de c_1\min\{1/\kappa, 1/\lambda_{\max}, \rho \}$ and $C\de (R \lambda_{\min})^{\Delta/R}/\Area(D)$.
	Then, for every $T\geq c_2 \Delta^2$, initial condition $x_0\in D$, and measurable $E\subseteq D$,
	\begin{align}
		\bbP(X_{T}\in E \mid X_0 = x_0) \geq C \Area(E).\label{eq:FairMan}
	\end{align}
\end{corollary}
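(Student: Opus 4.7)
The strategy is to chain Corollary \ref{cor: TransportDistant} with the two reduction arguments of Section \ref{sec: ReductionClose} and then repackage the constants. I would first fix an absolute $\delta$ small enough that Corollary \ref{cor: TransportDistant}, Lemma \ref{lem: x0_reduction_distant}, and Corollary \ref{cor: yreduction} are all simultaneously applicable. Corollary \ref{cor: TransportDistant} then yields, at time $T_1 \de (2\lceil 10 \Delta/r_\delta\rceil + 2)r_\delta^2$, the uniform lower bound $\bbP(X_{T_1} \in \sB(y,\varepsilon) \mid X_0 = x_0) \geq \tilde{C}\,\Area(\sB(y,\varepsilon))$ over $x_0,y \in \cD(\delta)$ and $\varepsilon \leq \gamma r_\delta$, where $\tilde{C}$ is an absolute-constant multiple of $(c r_\delta \lambda_{\min})^{C \Delta/r_\delta}/r_\delta^2$. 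This uniform ball-target estimate is precisely the hypothesis required to invoke Corollary \ref{cor: yreduction}.

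Next, for each fixed $x_0 \in \cD(\delta)$, I would apply Corollary \ref{cor: yreduction} to extend the estimate to arbitrary measurable $E \subseteq D$ at a slightly larger time $T_2 = T_1 + O(\min\{1/\kappa^2, 1/\lambda_{\max}^2, \rho^2\})$, and then apply Lemma \ref{lem: x0_reduction_distant} to relax $x_0 \in \cD(\delta)$ to general $x_0 \in D$ at a further slightly larger time $T_3 = T_2 + O(\min\{1/\kappa^2, 1/\lambda_{\max}^2, \rho^2\})$, losing only an absolute constant factor in the prefactor. For any $T \geq T_3$, the bound extends by Markovianity: conditioning on $X_{T - T_3}$ and using the fact that the $T_3$-bound is uniform in the initial state yields $\bbP(X_T \in E \mid X_0 = x_0) \geq (\text{abs.\ const.})\,\tilde{C}\,\Area(E)$.

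Finally, it remains to repackage the constants into the stated form. Since $r_\delta$ is an absolute-constant multiple of $\min\{1/\kappa, 1/\lambda_{\max}, \rho\}$ and $\Delta \geq r_\delta$ for $\delta$ sufficiently small (as remarked in the proof of Corollary \ref{cor: TransportDistant}), the accumulated time $T_3$ is at most an absolute constant times $\Delta^2$, which fixes $c_2$. To define $R$, choose $c_1$ small enough that $R \de c_1\min\{1/\kappa, 1/\lambda_{\max}, \rho\}$ satisfies both $R \leq c\, r_\delta$ and $R \leq r_\delta/C$, where $c,C$ are the constants of Corollary \ref{cor: TransportDistant}; since $R\lambda_{\min} \leq 1$, the elementary monotonicity $a^m \leq b^n$ for $0 < a \leq b \leq 1$ and $m \geq n \geq 0$ gives $(R\lambda_{\min})^{\Delta/R} \leq (c\, r_\delta\lambda_{\min})^{C\Delta/r_\delta}$, and the lower bound $\Area(D) \geq \pi (r_\delta/5)^2$ (valid since $\cD(\delta)\neq \emptyset$ for $\delta$ small) converts the factor $1/r_\delta^2$ in $\tilde{C}$ into $1/\Area(D)$ up to an absolute constant absorbed into $c_1$. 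The main obstacle is exactly this bookkeeping: the constants $c$, $C$, and $\gamma$ from Corollary \ref{cor: TransportDistant} are entangled in the base, the exponent, and the admissible range of $\varepsilon$, and the trick is to exploit $R\lambda_{\min} \leq 1$ to move base and exponent in lockstep, producing the clean single-parameter form $(R\lambda_{\min})^{\Delta/R}/\Area(D)$ demanded by the corollary.
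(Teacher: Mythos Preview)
Your proposal is correct and follows essentially the same approach as the paper: chain Corollary \ref{cor: TransportDistant} with the two reductions of Section \ref{sec: ReductionClose}, then repackage constants. The only cosmetic differences are that you apply Corollary \ref{cor: yreduction} before Lemma \ref{lem: x0_reduction_distant} (the paper does the opposite order; both work), and you lower-bound $\Area(D)$ via $\cD(\delta)\neq\emptyset$ rather than via the curvature inequality $\Area(D)\geq \pi/\kappa^2$ that the paper cites.
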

\begin{proof}
	Recall that Corollary \ref{cor: TransportDistant} yields absolute constants $\delta_0, \gamma, c_3,c_4>0$ such that for every $\delta \leq \delta_0$, $\varepsilon \leq \gamma r_\delta$, and $y,x_0' \in \cD(\delta)$,
	\begin{equation}
		\bbP(X_{T_1} \in \sB(y,\varepsilon) \mid X_0 = x_0') \geq  (c_3 r_\delta \lambda_{\min})^{c_4\Delta/r_\delta} (\varepsilon / r_\delta )^2  \textnormal{ if }  T_1 \de (2\lceil 10 \Delta/ r_\delta \rceil + 2) r_\delta^2.
	\end{equation}
	Hence, possibly reducing $\delta_0$ further, the reduction from Lemma \ref{lem: x0_reduction_distant} yields a constant $c_5>0$ such that for every $T_2 \geq T_1 +  c_5 \min\{1/\kappa^2, 1/\lambda_{\max}^2, \rho^2 \}$, also if $x_0 \not\in \cD(\delta)$,
	\begin{align}
		\bbP(X_{T_2} \in \sB(y,\varepsilon) \mid X_0 = x_0) & \geq \frac{1}{2}\inf_{x_0' \in \cD(\delta)}  \bbP(X_{T_1} \in \sB(y,\varepsilon) \mid X_0 = x_0')\nonumber \\
		                                                    & \geq \frac{1}{2}(c_3 r_\delta \lambda_{\min})^{c_4\Delta/r_\delta} (\varepsilon / r_\delta )^2.
	\end{align}
	Since $\Area(\sB(y,\varepsilon)) = \pi \varepsilon^2$, this ensures that the condition \eqref{eq:BoldCamel} of Corollary \ref{cor: yreduction} is satisfied.
	Hence, there exists $\delta \leq \delta_0$ and a constant $c_6>0$ such that for every measurable $E\subseteq D$,
	\begin{align}
		\bbP(X_{T_3}\in E \mid X_0 = x_0) \geq \frac{1}{4\pi}\frac{1}{r_\delta^2}(c_3 r_\delta \lambda_{\min})^{c_4\Delta/r_\delta} \Area(E) \label{eq:KindRam}
	\end{align}
	for $T_3 = T_2 +  c_6\min\{1/\kappa^2, 1/\lambda_{\max}^2, \rho^2 \}$.
	Recalling the definition of $T_1,T_2$, this shows that \eqref{eq:FairMan} holds if the following constraints are satisfied by $T$ and $C$:
	\begin{align}
		T & \geq (2\lceil 10 \Delta/ r_\delta \rceil + 2) r_\delta^2 +  (c_5 +c_6)\min\{1/\kappa^2, 1/\lambda_{\max}^2, \rho^2 \}, \label{eq:DampRam} \\
		C & \leq \frac{1}{4\pi}\frac{1}{r_\delta^2}(c_3 r_\delta \lambda_{\min})^{c_4\Delta/r_\delta}. \label{eq:JustFox}
	\end{align}
	It remains to show that these constraints are satisfied if we choose $c_1$ and $c_2$ appropriately in Corollary \ref{cor: DoeblinHolds}.
	To this end, we next consider some preparatory estimates on the quantities occurring in \eqref{eq:DampRam} and \eqref{eq:JustFox}.

	Recall the definition of $r_\delta$ from \eqref{eq: Def_R_eps} and that $\delta$ was fixed preceding \eqref{eq:KindRam}.
	Hence, for a suitable absolute constant $c_7>0$, we here have,
	\begin{align}
		r_\delta = c_7 \min\{1/\kappa, 1/\lambda_{\max}, \rho  \}.\label{eq:SafeRam}
	\end{align}
	Recall that $B_0 = \partial D$ and that the curvature of $B_0$ is bounded by $\kappa$.
	This implies that $\Area(D) \geq \pi/\kappa^2$; see \eg \cite{pankrashkin2015inequality}.
	In particular, there exists an absolute constant $c_8>0$ such that $
		\Area(D) \geq c_8 r_\delta^2.$
	Finally, recall from \eqref{eq: Def_Delta} that $\Delta$ is the diameter of $D$ with respect to the geodesic distance.
	In particular, $\Delta$ is greater than or equal to the diameter with respect to the Euclidean distance, so $\Area(D) \leq \pi \Delta^2$.
	Hence, for some absolute $c_9>0$,
	\begin{align}
		\Delta \geq c_9 r_\delta.\label{eq:DampBird}
	\end{align}

	We can now simplify \eqref{eq:DampRam} and \eqref{eq:JustFox}.
	Specifically, by \eqref{eq:DampBird}
	\begin{align}
		(2\lceil 10\Delta /r_\delta \rceil + 2)r_\delta^2 \leq c_{10} \Delta r_\delta  \leq c_{11} \Delta^2
	\end{align}
	for some sufficiently large constants $c_{10},c_{11}>0$.
	Further, by \eqref{eq:SafeRam} and \eqref{eq:DampBird},
	\begin{align}
		(c_5 + c_6)\min\{1/\kappa^2, 1/\lambda_{\max}^2, \rho^2\} = (c_5 + c_6)c_9^{-1} r_\delta^2 \leq c_{12} \Delta^2
	\end{align}
	for some $c_{12} >0$.
	Thus, the constraint in \eqref{eq:DampRam} is satisfied by $T = c_2 \Delta^2$ if we take $c_2$ sufficiently large.
	Similarly, using that $\Area(D) \leq \pi \Delta^2$, we have $(4\pi)^{-1} r_{\delta}^{-2} \geq c_{13}/\Area(D)$.
	Taking $c_1$ sufficiently small in the definition of $R$, we may assume that
	\begin{align}
		R\lambda_{\min} < c_3 r_\delta \lambda_{\min}\  \textnormal{ and }\ \Delta/r_{\delta} < \Delta /R.
	\end{align}
	Then, the constraint in \eqref{eq:JustFox} is satisfied by $C = c_{13} (R \lambda_{\min})^{\Delta/R}/\Area(D)$.
	Taking $c_1$ smaller, we can absorb the absolute constant factor $c_{13}$ in the factor $(R \lambda_{\min})^{\Delta/R}$, so that \eqref{eq:JustFox} is satisfied by $C = (R \lambda_{\min})^{\Delta/R}/\Area(D)$, as desired.
\end{proof}

\subsection{Proof of \texorpdfstring{Theorem \ref{thm: Main_mixing}}{Theorem}}\label{sec: ProofThm}
Recall that we will rely on a Doeblin minorization; see \eg \cite[Theorem 8]{roberts2004general}.
Specifically, the following formulation is applicable:
\begin{lemma}[Doeblin minorization]\label{lem: ReductionBall}
	Suppose that one is given some $C,T>0$ such that for every $x_0\in D$ and measurable $E\subseteq D$,
	\begin{align}
		\bbP(X_T \in E \mid X_0 = x_0) \geq C  \Area(E).\label{eq:LuckyUser}
	\end{align}
	Then,
	$
		\pi_{\min} \geq C
	$ and the mixing time satisfies
	$
		\tmix \leq T \lceil \ln(1/4)/\ln(1 - C\Area(D))\rceil.
	$
\end{lemma}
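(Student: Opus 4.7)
The plan is to use the classical Doeblin coupling that follows from a minorization condition. The hypothesis \eqref{eq:LuckyUser} is exactly the statement that the $T$-step transition kernel $P^T(x_0, \cdot) \de \bbP(X_T \in \cdot \mid X_0 = x_0)$ dominates $\alpha \mu$ uniformly in $x_0$, where $\alpha \de C \Area(D)$ and $\mu(\cdot) \de \Area(\cdot)/\Area(D)$ is the uniform probability measure on $D$. Note that $\alpha \leq 1$ by applying \eqref{eq:LuckyUser} with $E = D$. From this decomposition we will extract both the lower bound on $\pi_{\min}$ and the mixing time estimate.

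For the bound on $\pi_{\min}$, I would integrate \eqref{eq:LuckyUser} against the stationary distribution. Stationarity gives
\begin{align}
\pi(E) = \int_D \bbP(X_T \in E \mid X_0 = x_0)\, \pi(\intd x_0) \geq C \Area(E) \nonumber
\end{align}
for every measurable $E \subseteq D$ with $\Area(E)>0$. Hence $\pi(E)/\Area(E) \geq C$, and the definition \eqref{eq: Def_pimin} yields $\pi_{\min} \geq C$.

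For the mixing time bound I would implement the standard Doeblin coupling. Write $P^T(x_0, \cdot) = \alpha \mu(\cdot) + (1-\alpha) Q(x_0, \cdot)$, where $Q(x_0, \cdot) \de (1-\alpha)^{-1}(P^T(x_0, \cdot) - \alpha \mu(\cdot))$ is a well-defined probability kernel precisely because of the minorization hypothesis. Given two initial conditions $x_0$ and $y_0$, I run the chains in lockstep across time increments of length $T$ as follows: at each such step, independently flip a biased coin; with probability $\alpha$ draw a single point from $\mu$ and use it as the common next state of both chains, and otherwise draw the next states independently from $Q(x_0, \cdot)$ and $Q(y_0, \cdot)$. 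Once the chains coincide they remain coincident forever. The probability that they fail to couple after $n$ such steps is at most $(1-\alpha)^n$, so the coupling inequality yields $\sup_E \lvert \bbP(X_{nT} \in E \mid X_0 = x_0) - \bbP(X_{nT} \in E \mid X_0 = y_0) \rvert \leq (1-\alpha)^n$. Taking $y_0 \sim \pi$ and using stationarity gives $\sup_E \lvert \bbP(X_{nT} \in E \mid X_0 = x_0) - \pi(E) \rvert \leq (1-\alpha)^n$ for every $x_0$. Solving $(1-\alpha)^n \leq 1/4$ leads to $n \geq \ln(1/4)/\ln(1-\alpha)$, and since the total variation distance to $\pi$ is nonincreasing along the semigroup (by the data-processing inequality), the choice $n = \lceil \ln(1/4)/\ln(1 - C\Area(D)) \rceil$ suffices to conclude that $\tmix \leq nT$.

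No step here is a real obstacle; this is a textbook argument once the minorization has been established. The only points worth flagging are that the decomposition $\alpha\mu + (1-\alpha) Q$ is legitimate because $C\Area(D) \leq 1$ and because $Q(x_0, \cdot)$ is a genuine probability kernel by construction, and that the sup-over-measurable-sets definition of $\tmix$ in \eqref{eq: Def_tmix} is precisely the total variation distance to $\pi$, which is the quantity the coupling inequality controls.
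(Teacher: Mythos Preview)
Your proof is correct and follows essentially the same approach as the paper: both integrate \eqref{eq:LuckyUser} against $\pi$ to get $\pi_{\min}\geq C$, and both deduce the geometric total-variation decay $(1-C\Area(D))^n$ from the Doeblin minorization, with the only difference being that the paper cites the decay bound from the literature while you spell out the standard coupling argument behind it.
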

\begin{proof}
	Assumption \eqref{eq:LuckyUser} means that the transition kernel satisfies the Doeblin minorization condition with respect to the probability measure $Q(\cdot) \de \Area(\cdot)/\Area(D)$.
	It is classical, see \eg \cite[Theorem 8]{roberts2004general}, that then
	\begin{align}
		\sup_{E \subseteq D} \lvert \bbP(X_{nT} \in E \mid X_0 = x_0) - \pi(E)\rvert \leq (1 - C\Area(D))^n
	\end{align}
	for all $n\geq 1$ and $x_0\in D$.
	Take $n \de \lceil \ln(1/4)/\ln(1 - C\Area(D))\rceil$ and recall \eqref{eq: Def_tmix} to find the desired upper bound on $\tmix$.
	Note that $\pi(E) = \bbE_{\pi}[\bbP(X_T \in E \mid X_0)] \geq C \Area(E)$ for every measurable $E\subseteq D$ by \eqref{eq:LuckyUser} and recall \eqref{eq: Def_pimin} to also find that $\pi_{\min}\geq C$.
\end{proof}
The desired result is now immediate by Corollary \ref{cor: DoeblinHolds} and Lemma \ref{lem: ReductionBall}:
\begin{proof}[Proof of \texorpdfstring{Theorem \ref{thm: Main_mixing}}{Theorem}]
	The bound on $\pi_{\min}$ claimed in \eqref{eq:KindNose} is exactly what follows from using the constant $C$ from Corollary \ref{cor: DoeblinHolds} in Lemma \ref{lem: ReductionBall}.
	Regarding the mixing time, note that a first order Taylor expansion of $\ln(1-x)$ near $x=0$ and the fact that $\lceil x \rceil \leq 2x$ for $x\geq 1$ imply that there exist $c_1,c_2>0$ such that
	\begin{align}
		\Bigl\lceil \frac{\ln(1/4)}{\ln\bigl(1 -  (R \lambda_{\min})^{\Delta/R}\bigr)} \Bigr\rceil \leq  c_1(R \lambda_{\min})^{-\Delta/R}\ \textnormal{ if }\ (R \lambda_{\min})^{\Delta/R} < c_2.\label{eq:VagueBox}
	\end{align}
	By taking the constant occurring in the assumed bound on $R$ in Theorem \ref{thm: Main_mixing} small, it can be ensured that the factor $(R \lambda_{\min})^{-\Delta/R}$ is greater than a given absolute constant.
	Thus, the bound on $\tmix$ claimed in \eqref{eq:KindNose} also follows.
\end{proof}

\begin{remark}\label{rem: Adaptive}
	The global approach underlying the proof can be used in some instances to derive better estimates whenever every two points in the domain can be connected through an efficient sequence of steps which locally transport some probability mass as in Figures \ref{fig: ForcedSmash} and \ref{fig: AvoidBarrier}.
	Each step which has to be taken along the route gives a constant factor reduction in the quality of the bounds, and the number of permeability-based factors which one picks up depends on the number of times that the route crosses barriers.

	We here took steps of constant size, but one could also do this adaptively by taking bigger steps in well-behaved regions of the domain where the barriers are well-spaced or have low curvature, and taking small steps only where necessary.
	There are instances where this would significantly reduce the number of steps taken and thus improve the estimate.
	On the other hand, there are also instances where approaches using local transport to establish a Doeblin minorization will fail to get efficient estimates, even if one chooses the route and step sizes smartly.
	Examples of both instance types are visualized in Figure \ref{fig: Adaptive}.
\end{remark}

\begin{figure}[t]
	\centering
	\includegraphics[width=0.85\textwidth]{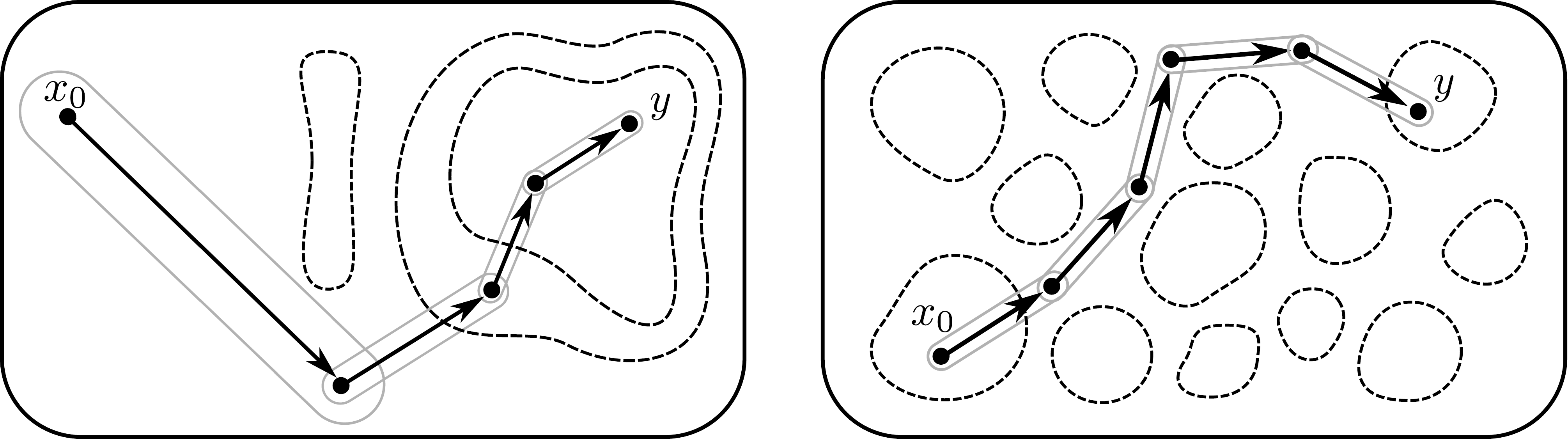}
	\caption{The proof approach underlying Theorem \ref{thm: Main_mixing} can be extended for efficient estimates avoiding worst-case behavior in some instances, but not always. (Left) Here, choosing a non-geodesic route and using location-dependent step sizes could reduce the number of local transportation steps. (Right) In a cluttered rectangular domain, a proof based on local transportation will give exponentially decaying estimates as the size of the domain increases, as one has to use many small steps. The true mixing time is however expected to behave similarly to a random walk on a square, which has order $\Delta^2$. }
	\label{fig: Adaptive}
\end{figure}

\subsection*{Acknowledgements}
This work is part of the project Clustering and Spectral Concentration in Markov Chains with project number OCENW.KLEIN.324 of the research programme Open Competition Domain Science – M, which is partly financed by the Dutch Research Council (NWO).
AVW additionally acknowledges funding by the Deutsche Forschungsgemeinschaft (DFG, German Research Foundation) under Germany’s Excellence Strategy EXC 2044–390685587, Mathematics Münster: Dynamics–Geometry–Structure.

\bibliographystyle{abbrv}

\end{document}